\numberwithin{equation}{section}
\theoremstyle{plain}
\newtheorem{thm}{Theorem}[section]
\theoremstyle{plain}
\newtheorem{lem}{Lemma}[section]
\theoremstyle{plain}
\newtheorem*{q1}{Question 1}
\theoremstyle{plain}
\newtheorem*{q2}{Question 2}
\theoremstyle{plain}
\newtheorem*{theorema}{Theorem A}
\theoremstyle{plain}
\newtheorem*{theoremb}{Theorem B}
\theoremstyle{plain}
\newtheorem*{theoremc}{Theorem C}
\theoremstyle{plain}
\newtheorem*{theoremd}{Theorem D}
\theoremstyle{plain}
\newtheorem*{theoreme}{Theorem E}
\theoremstyle{remark}
\newtheorem*{remark}{Remark}
\theoremstyle{remark}
\newtheorem*{remarks}{Remarks}
\theoremstyle{definition}
\newtheorem*{casem1}{Case $\bfitm=\mathbf{1}$}
\theoremstyle{definition}
\newtheorem*{casem2}{Case $\bfitm=\mathbf{2}$}
\theoremstyle{definition}
\newtheorem*{casem3}{Case $\bfitm=\mathbf{3}$}
\theoremstyle{definition}
\newtheorem*{casem4}{Case $\bfitm=\mathbf{4}$}
\theoremstyle{definition}
\newtheorem*{dec}{Double Even Criterion}
\theoremstyle{definition}
\newtheorem*{gcdc}{GCD Criterion}
\theoremstyle{definition}
\newtheorem*{dpca}{Double Periodic Coloring Algorithm}
\theoremstyle{definition}
\newtheorem*{claim}{Claim}
\theoremstyle{definition}
\newtheorem*{case1a}{Case 1A}
\theoremstyle{definition}
\newtheorem*{case1b}{Case 1B}
\theoremstyle{definition}
\newtheorem*{case1c}{Case 1C}
\theoremstyle{definition}
\newtheorem*{case1}{Case 1}
\theoremstyle{definition}
\newtheorem*{case2}{Case 2}
\theoremstyle{definition}
\newtheorem*{case3}{Case 3}
\theoremstyle{definition}
\newtheorem*{case4}{Case 4}
\theoremstyle{definition}
\newtheorem*{case5}{Case 5}
\theoremstyle{definition}
\newtheorem*{fact1}{Fact 1}
\theoremstyle{definition}
\newtheorem*{fact2}{Fact 2}
\def\bfitm{\boldsymbol{m}}
\def\dd{\mathrm{d}}
\def\ee{\mathrm{e}}
\def\eps{\varepsilon}
\def\Nn{\mathbb{N}}
\def\Zz{\mathbb{Z}}
\def\AAA{\mathcal{A}}
\def\BBB{\mathcal{B}}
\def\EEE{\mathcal{E}}
\def\III{\mathcal{I}}
\def\KKK{\mathcal{K}}
\def\LLL{\mathcal{L}}
\def\MMM{\mathcal{M}}
\def\PPP{\mathcal{P}}
\def\WWW{\mathcal{W}}
\def\XXX{\mathcal{X}}
\def\frakc{\mathfrak{c}}
\DeclareMathOperator{\meas}{meas}
\DeclareMathOperator{\length}{length}
\DeclareMathOperator{\ls}{LS}
\DeclareMathOperator{\rs}{RS}
\DeclareMathOperator{\parity}{parity}
\DeclareMathOperator{\sign}{sign}
\renewcommand{\le}{\leqslant}
\renewcommand{\ge}{\geqslant}
\title[New Kronecker--Weyl type equidistribution results]
{New Kronecker--Weyl type equidistribution\\
results and diophantine approximation}
\author[Beck]{J. Beck}
\address{Department of Mathematics, Rutgers University, Hill Center for the Mathematical Sciences, Piscataway NJ 08854, USA}
\email{jbeck@math.rutgers.edu}
\author[Chen]{W.W.L. Chen}
\address{Department of Mathematics and Statistics, Macquarie University, Sydney NSW 2109, Australia}
\email{william.chen@mq.edu.au}
\author[Yang]{Y. Yang}
\address{Department of Mathematics, Rutgers University, Hill Center for the Mathematical Sciences, Piscataway NJ 08854, USA}
\email{yy458@math.rutgers.edu}
\keywords{equidistribution, diophantine approximation, geodesics}
\subjclass[2010]{11K38, 37E35}
\begin{document}

\begin{abstract}
An interesting result of Veech more than 50 years ago is a parity, or mod~$2$, version of the Kronecker--Weyl equidistribution theorem concerning the irrational rotation sequence $\{q\alpha\}$, $q=0,1,2,3,\ldots.$
If $\alpha$ is badly approximable and $b\in(0,1)$ satisfies $b\ne\{m\alpha\}$ for any $m\in\Zz$, then the parity of cardinalities of the sets
$\{1\le q\le N:\{q\alpha\}\in[0,b)\}$ as $N\to\infty$ is evenly distributed.

We first answer a question of Veech and establish a stronger form of the mod~$n$ analog of his result (Theorem~3.1).
Furthermore, for irrational $\alpha$ and $b=\{m\alpha\}$ for some $m\in\Nn$, we give a simple yet precise characterization of those cases that give rise to even distribution (Theorem~2.1).
We also obtain time-quantitative description of some very striking violations of uniformity -- this part is particularly number theoretic in nature, and involves
Ostrowski representations of positive integers and $\alpha$-expansions of real numbers (Theorem~3.4).

The Veech discrete $2$-circle problem can also be visualized as a problem that concerns $1$-direction geodesic flow on a surface obtained by modifying the surface comprising two side-by-side squares by the inclusion of symmetric barriers and gates on the vertical edges, with appropriate modification of the vertical edge identifications.
We establish a far-reaching generalization of this case to ones that concern $1$-direction geodesic flow on surfaces obtained by modifying a finite square tiled translation surface in analogous but not necessarily symmetric ways (Theorem~3.2).
\end{abstract}

\maketitle

\thispagestyle{empty}

%
%

\section{Introduction}\label{sec1}

Our starting point is the famous Kronecker--Weyl equidistribution theorem which refers to the uniformity result concerning the irrational rotation sequence.

This says that the sequence $\{q\alpha\}$, $q=0,1,2,3,\ldots,$ where $\alpha$ is irrational and $\{z\}$ denotes the fractional part of~$z$, is uniformly distributed in the unit interval $[0,1)$, so that for any subinterval $[a,b)\subset[0,1)$, we have
\begin{equation}\label{eq1.1}
\lim_{N\to\infty}\frac{1}{N}\sum_{\substack{{1\le q\le N}\\{\{q\alpha\}\in[a,b)}}}1=b-a.
\end{equation}
It is easy to show that \eqref{eq1.1} holds for all $[a,b)\subset[0,1)$ if and only if it holds for all $[0,b)\subset[0,1)$.
Furthermore, we can consider the more general sequence $\{\tau+q\alpha\}$, $q=0,1,2,3,\ldots,$ with an arbitrary starting point $\tau\in[0,1)$.
Then for any $\tau\in[0,1)$ and $b\in[0,1)$, we have
\begin{displaymath}
\lim_{N\to\infty}\frac{1}{N}\sum_{\substack{{1\le q\le N}\\{\{\tau+q\alpha\}\in[0,b)}}}1=b.
\end{displaymath}

This sequence is called the irrational rotation sequence because if we take a circle with circumference $1$ and radius~$1/2\pi$, then the unit interval can be represented by this circle, and moving from one term to the next corresponds to an anticlockwise rotation by an angle $2\pi\alpha$, as shown in Figure~1.1.

\begin{displaymath}
\begin{array}{c}
\includegraphics{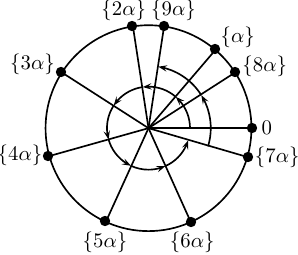}
\\
\mbox{Figure 1.1: the irrational rotation sequence}
\end{array}
\end{displaymath}

The uniformity result concerning the irrational rotation sequence is the first \textit{equidistribution} type result, proved independently by Bohl, Sierpinski and Weyl around 1910, followed soon by the multidimensional version and also the continuous version concerning the torus line, both due to Weyl.
And of course Birkhoff's ergodic theorem, proved about 20 years later, says that in general every ergodic measure-preserving transformation is a \textit{rich source}, namely that it provides half-infinite orbits that exhibit equidistribution relative to the invariant measure.

An interesting problem studied about 50 years ago by Veech~\cite{V1} is the following \textit{parity}, or mod~$2$, version of the classical equidistribution theorem.
Take two copies of the  circle with circumference~$1$ and radius~$1/2\pi$, and mark off a segment $[0,b)$ of length~$b$ in the anticlockwise direction on each circle.
Let $J_1=J_1(b)$ denote this segment on the first circle and let $J_2=J_2(b)$ denote this segment on the second circle.
We now take an irrational number~$\alpha$, and consider the discrete dynamical system illustrated in Figure~1.2.

\begin{displaymath}
\begin{array}{c}
\includegraphics{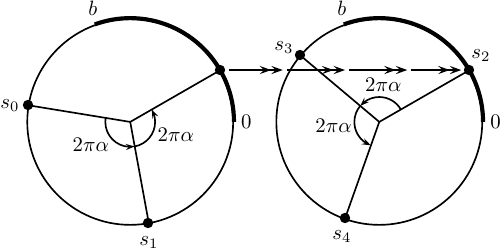}
\\
\mbox{Figure 1.2: the parity version of the classical equidistribution theorem}
\end{array}
\end{displaymath}

Start with an arbitrary point $s_0$ on the first circle~$C_1$.
Rotating in the anticlockwise direction by an angle $2\pi\alpha$, we arrive at a point~$s_1$.
If $s_1$ does not lie on~$J_1$, then we leave it where it is.
If $s_1$ lies on~$J_1$, then we move it to the corresponding point on the second circle~$C_2$.

In general, suppose that the point $s_i$ lies on the circle~$C_j$, where $j=1,2$.
Rotating in the anticlockwise direction by an angle $2\pi\alpha$, we arrive at a point~$s_{i+1}$.
If $s_{i+1}$ does not lie on~$J_j$, then we leave it where it is.
If $s_{i+1}$ lies on~$J_j$, then we move it to the corresponding point on the other circle~$C_k$, where $k\in\{1,2\}\setminus\{j\}$.

Clearly the sequence $s_0,s_1,s_2,s_3,\ldots$ keeps alternating between the two circles.
The problem is to describe the distribution of this half-infinite orbit on the union of the two circles, and find cases that exhibit equidistribution.

There are at least two different ways of visualizing the Veech discrete $2$-circle system as a \textit{continuous} flat dynamical system.
This is motivated by the observation that the problem of torus lines with irrational slopes in the unit square as well as the problem of point billiards with initial irrational slopes on a square table are basically equivalent continuous representations of the problem concerning discrete irrational rotation sequences.
More precisely, the $1$-dimensional irrational rotation sequence arises from these two continuous $2$-dimensional flat dynamical systems with irrational slopes via \textit{discretization}, the general method of converting the problem of describing the distribution of a continuous orbit to the discrete problem of studying where the orbit hits the boundary. 

We first discuss a simple continuous system which gives arguably the best way to visualize the Veech discrete $2$-circle system.
In this simple continuous model, we replace the $2$-circle underlying set by a flat surface, and replace the discrete orbit by a geodesic, or generalized torus line.
This flat surface, which we call the \textit{$2$-square-$b$ surface}, is constructed from joining two unit squares side by side and adding an extra vertical barrier, a wall of length $1-b$ between them, as shown in Figure~1.3.
The vertical complement of the barrier, indicated by the line in light gray, is a \textit{$b$-size gate}, or \textit{$b$-gate}, in the middle which makes it possible to travel from one square to the other.
To make this a surface, we identify pairs of boundary edges with the same label via perpendicular translation.
Note that the two sides of the vertical barrier in the middle are \textit{different} edges.
Note also that the $2$-square-$b$ surface actually has two $b$-gates.
Apart from the obvious one in the middle, there is a second $b$-gate on the far right vertical edge $v_1$ which is identified with the far left vertical edge~$v_1$.
This is a $b$-gate, as it is clear that a geodesic that reaches the far right vertical edge $v_1$ continues from the corresponding point on the far left vertical edge~$v_1$, and in doing so, passes from the right square to the left square.

\begin{displaymath}
\begin{array}{c}
\includegraphics{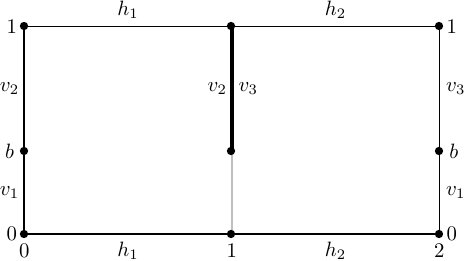}
\\
\mbox{Figure 1.3: $2$-square-$b$ surface}
\end{array}
\end{displaymath}

Since the $2$-square-$b$-surface is a flat translation surface, geodesics on this surface are $1$-direction generalized torus lines.
If the slope of a geodesic is~$\alpha$, then we call it an $\alpha$-geodesic or $\alpha$-line.

Let us now clarify the connection between the Veech discrete $2$-circle system in Figure~1.2 and the $1$-direction geodesic flow with slope $\alpha$ on the $2$-square-$b$ surface in Figure~1.4.

First of all, we can represent the two circles in Figure~1.2 by two \textit{circles in the vertical direction} in Figure~1.4.
We can first view the far left edges $v_1$ and $v_2$ of the $2$-square-$b$ surface as forming a circle, due to the identification of the point $(0,0)$ at the bottom with the point $(0,1)$ at the top.
Thus we visualize the left vertical edge of the left square of the $2$-square-$b$ surface as the left circle in Figure~1.2.

We can next view the middle edge $v_3$ and the $b$-gate below it of the $2$-square-$b$ surface as forming a circle, due to the identification of the point $(1,0)$ at the bottom with the point $(1,1)$ at the top.
Thus we visualize the left vertical edge of the right square of the $2$-square-$b$ surface as the right circle in Figure~1.2.

\begin{displaymath}
\begin{array}{c}
\includegraphics{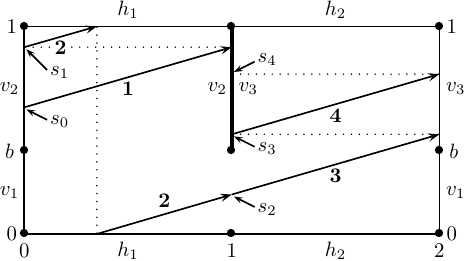}
\\
\mbox{Figure 1.4: a geodesic with slope $\alpha$ on the $2$-square-$b$ surface}
\end{array}
\end{displaymath}

Indeed, we can go back and forth between Figures 1.2 and~1.4.

Consider the point $s_0$ on the left circle in Figure~1.2.
Based on the representation of the two circles just discussed, we find $s_0$ on the left vertical edge of the left square of the $2$-square-$b$ surface as shown in Figure~1.4, and $s_0$ is the initial point of the geodesic segment~$\mathbf{1}$.
The point $s_1$ is obtained from $s_0$ by rotating in the anticlockwise direction by an angle~$2\pi\alpha$, and we see from Figure~1.2 that it does not lie on~$J_1$, so it stays on the left circle.
Now the point $s_1$ is related to the terminal point of the geodesic segment~$\mathbf{1}$.
As shown in Figure~1.4, this terminal point lies on the edge $v_2$ in the middle, but in view of the identification of the edges~$v_2$, we can place the point $s_1$ on the left vertical edge of the left square of the $2$-square-$b$ surface that corresponds to the left circle.

As shown in Figure~1.4, $s_1$ is the initial point of the geodesic segment~$\mathbf{2}$.
The point $s_2$ is obtained from $s_1$ by rotating in the anticlockwise direction by an angle~$2\pi\alpha$, and we see from Figure~1.2 that it lies on~$J_1$, so it moves to the corresponding point on the right circle.
Now the point $s_2$ is related to the terminal point of the geodesic segment~$\mathbf{2}$.
As shown in Figure~1.4, this terminal point lies on the $b$-gate in the middle, on the left vertical edge of the right square of the $2$-square-$b$ surface that corresponds to the right circle.

As shown in Figure~1.4, $s_2$ is the initial point of the geodesic segment~$\mathbf{3}$.
The point $s_3$ is obtained from $s_2$ by rotating in the anticlockwise direction by an angle~$2\pi\alpha$, and we see from Figure~1.2 that it does not lie on~$J_2$, so it stays on the right circle.
Now the point $s_3$ is related to the terminal point of the geodesic segment~$\mathbf{3}$.
As shown in Figure~1.4, this terminal point lies on the edge $v_3$ on the right, but in view of the identification of the edges~$v_3$, we can place the point $s_3$ on the left vertical edge of the right square of the $2$-square-$b$ surface that corresponds to the right circle.

And so on.

There is a fundamental difference between torus line flow on a square and geodesic flow on the $2$-square-$b$ surface.
Torus line flow in a square, or in any cube of higher dimensions, exhibits remarkable stability and predictability, where two particles moving on two parallel torus lines and close to each other with the same speed remain close forever. 
Thus such dynamical systems are said to be \textit{integrable}.

How about the analogous question for geodesic flow on the $2$-square-$b$ surface?
Here, there are singular points, and two particles moving with the same speed on two parallel geodesic segments close to each other do \textit{not} remain close forever after they pass through opposite sides of a split singularity, as shown in Figure~1.5.

\begin{displaymath}
\begin{array}{c}
\includegraphics{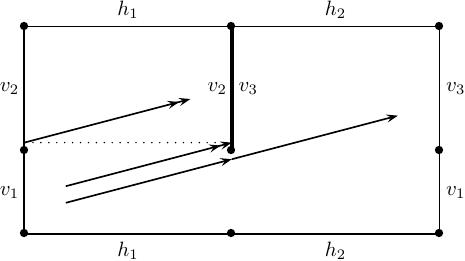}
\\
\mbox{Figure 1.5: singular points on the $2$-square-$b$ surface}
\end{array}
\end{displaymath}

Thus this dynamical system is said to be \textit{non-integrable}.

We next discuss the second model, a billiard system due to Masur.
Billiards have the advantage that they represent a more-or-less  \textit{legitimate} mechanical system, one step closer to physics.
The billiard table in this second model is the underlying double-square of the $2$-square-$b$ surface.
For convenience, we take a copy scaled by half, as shown in the picture on the left in Figure~1.6.

The billiard flow is a $4$-direction flow.
The well-known trick of \textit{unfolding}, first introduced by K\"{o}nig and Sz\"{u}cs~\cite{KS} in 1913, converts the $4$-direction billiard flow on the table in the picture on the left in Figure~1.6 to a $1$-direction linear flow on the corresponding \textit{$4$-copy flat surface}, obtained by a reflection across the right vertical side, followed by a reflection of the whole image across the top horizontal side, as shown in the picture on the right in Figure~1.6.
Here the left and right vertical edges are identified, the top and bottom horizontal edges are identified, and the two sides of the two walls are appropriately identified as shown.

\begin{displaymath}
\begin{array}{c}
\includegraphics{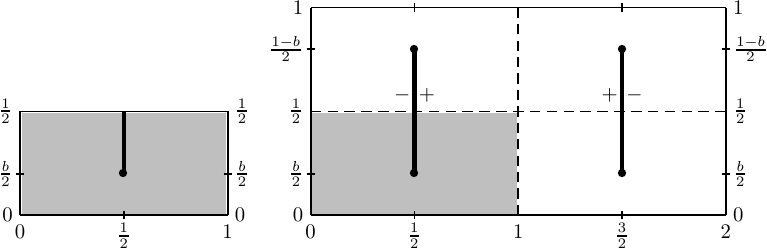}
\\
\mbox{Figure 1.6: the underlying double-square of the $2$-square-$b$ surface}
\\
\mbox{and a torus with two vertical walls}
\end{array}
\end{displaymath}

In particular, the right side of the left wall and the left side of the right wall, both indicated by $+$, are identified, while the left side of the left wall and the right side of the right wall, both indicated by $-$, are identified.
Now a torus has genus~$1$.
However, with the two walls, the surface in the picture on the right in Figure~1.6 has genus~$2$.
And $1$-direction geodesic flow on this surface
is a $4$-fold covering of billiard flow on the table in the picture on the left in Figure~1.6.

Let us now clarify the connection between the Veech discrete $2$-circle system in Figure~1.2 and the $1$-direction geodesic flow with slope $\alpha$ on the torus with two vertical walls in Figure~1.7.

First of all, we can represent the two circles in Figure~1.2 by two \textit{circles in the vertical direction} in Figure~1.7.
We can first view the right side of the left wall and its vertical extension to the points $(1/2,0)$ and $(1/2,1)$ as forming a circle, with the extension forming the $b$-gate, due to the identification of the point $(1/2,0)$ at the bottom with the point $(1/2,1)$ at the top.
Thus we visualize this as the left circle in Figure~1.2.

We can next view the right side of the right wall and its vertical extension to the points $(3/2,0)$ and $(3/2,1)$ as forming a circle, with the extension forming the $b$-gate, due to the identification of the point $(3/2,0)$ at the bottom with the point $(3/2,1)$ at the top.
Thus we visualize this as the right circle in Figure~1.2.

\begin{displaymath}
\begin{array}{c}
\includegraphics{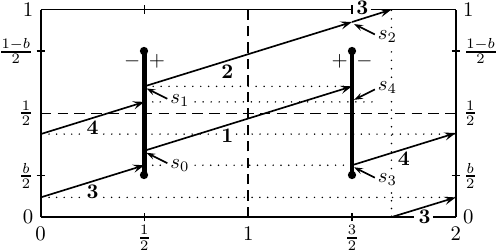}
\\
\mbox{Figure 1.7: a geodesic with slope $\alpha$ on the torus}
\\
\mbox{with two vertical walls in the middle}
\end{array}
\end{displaymath}

Indeed, we can go back and forth between Figures 1.2 and~1.7.

Consider the point $s_0$ on the left circle in Figure~1.2.
Based on the representation of the two circles just discussed, we find $s_0$ on the right side of the left wall in Figure~1.7, and $s_0$ is the initial point of the geodesic segment~$\mathbf{1}$.
The point $s_1$ is obtained from $s_0$ by rotating in the anticlockwise direction by an angle~$2\pi\alpha$, and we see from Figure~1.2 that it does not lie on~$J_1$, so it stays on the left circle.
Now the point $s_1$ is related to the terminal point of the geodesic segment~$\mathbf{1}$.
As shown in Figure~1.7, this terminal point lies on the left side of the right wall, but in view of the identification of the left side of the right wall with the right side of the left wall, we can place the point $s_1$ at the corresponding position on the right side of the left wall.
This corresponds to the left circle.

As shown in Figure~1.7, $s_1$ is the initial point of the geodesic segment~$\mathbf{2}$.
The point $s_2$ is obtained from $s_1$ by rotating in the anticlockwise direction by an angle~$2\pi\alpha$, and we see from Figure~1.2 that it lies on~$J_1$, so it moves to the corresponding point on the right circle.
Now the point $s_2$ is related to the terminal point of the geodesic segment~$\mathbf{2}$.
As shown in Figure~1.7, this terminal point lies on the extension of the right side of the right wall that forms the $b$-gate.
This corresponds to the right circle.

As shown in Figure~1.7, $s_2$ is the initial point of the geodesic segment~$\mathbf{3}$.
The point $s_3$ is obtained from $s_2$ by rotating in the anticlockwise direction by an angle~$2\pi\alpha$, and we see from Figure~1.2 that it does not lie on~$J_2$, so it stays on the right circle.
Now the point $s_3$ is related to the terminal point of the geodesic segment~$\mathbf{3}$.
As shown in Figure~1.7, this terminal point lies on the left side of the left wall, but in view of the identification of the left side of the left wall with the right side of the right wall, we can place the point $s_3$ at the corresponding position on the right side of the right wall.
This corresponds to the right circle.

And so on.

For the rest of this paper, we shall represent the Veech discrete $2$-circle system as $1$-direction geodesic flow on the $2$-square-$b$ surface.

The most  natural question is the following.
Since we are not interested in periodic orbits, we shall always assume that the slope $\alpha$ is irrational.

\begin{q1}
Let $\alpha$ be an irrational number.
When can we guarantee that every half-infinite $\alpha$-geodesic on the $2$-square-$b$ surface is uniformly distributed?
\end{q1}

An infinite discrete or continuous orbit is \textit{uniformly distributed} if, given a \textit{nice} test set~$A$, the asymptotic proportion of time the orbit visits $A$ is equal to the relative area of~$A$.
A classical result of Weyl~\cite{W} then says that it does not make any difference in the definition of uniformity of an infinite discrete or continuous orbit in the $2$-dimensional case whether we choose the family of \textit{nice} test sets to be (i) the class of all triangles, or (ii) the very different class of all circles, or (iii) the much larger class of all Jordan measurable sets which contains both (i) and (ii).

We recall that \textit{Jordan measurable} means that the $2$-dimensional Riemann integral of the characteristic function of the set is well defined.

In this paper \textit{uniformly distributed} and \textit{equidistributed} have the same meaning.

We can assume that the irrational number $\alpha$ satisfies $0<\alpha<1$. 
To see this, let $n\in\Zz$ be an arbitrary non-zero integer.
Starting from the same point on a vertical edge of the $2$-square-$b$ surface, it is clear that the $\alpha$-geodesic and the corresponding
$(\alpha+n)$-geodesic intersect the three vertical edges of the $2$-square-$b$ surface at the same points.
Thus if the $\alpha$-geodesic is equidistributed on the $2$-square-$b$ surface, then the corresponding $(\alpha+n)$-geodesic is also equidistributed on the $2$-square-$b$ surface, and \textit{vice versa}.

We shall formulate the main results of this long paper in Section~\ref{sec3}.
Before that, we discuss in Section~\ref{sec2} the interesting special case when $b=\{m\alpha\}$ for some non-zero integer $m\in\Zz$.
This special case, not considered by Veech~\cite{V1}, is in part \textit{simple} and in part \textit{difficult}.

Recall that geodesic flow on the $2$-square-$b$ surface is non-integrable, in view of the singularities in the orbit space, making it difficult to predict
the long-term behavior of any given half-infinite geodesic.
Assuming that a particle moves on the geodesic with constant speed, it is often difficult to predict \textit{which square} contains the particle at any given time instance $t$, when $t$ is large.
On the other hand, there are only two candidates for the location of the particle, with one in each square, since the $\alpha$-flow on the
$2$-square-$b$ surface \textit{modulo~$1$} reduces to a torus line in the unit square, giving rise to a well-predictable integrable system, namely, a straight line on the plane modulo~$1$.
So the difficult question is which one of these two candidates is the true location of the particle.

The special case when $b=\{m\alpha\}$ for some non-zero integer $m\in\Zz$ is \textit{simple}, in the sense that there is a particularly simple and efficient algorithm that answers the question of \textit{which square}.
Indeed, this question is equivalent to the following number-theoretic parity type problem.
Consider the infinite irrational rotation sequence
\begin{displaymath}
s_q=\{\tau+q\alpha\},
\quad
q=0,1,2,3,\ldots,
\end{displaymath}
with arbitrary starting point $\tau\ge0$.
For every $N\in\Nn$, let $\Psi(\alpha;\tau;b;N)$ denote the number of integers $q$ satisfying $0\le q\le N-1$ such that $0\le s_q<b$.
It is easy to see that the parity of $\Psi(\alpha;\tau;b;N)$ answers the question of \textit{which square} contains the particle.
This follows from discretization of the $\alpha$-geodesic and studying the consecutive intersection points on the vertical edges of the $2$-square-$b$ surface.
Note that an $\alpha$-geodesic moves from one square to the other if and only if it crosses one of the two $b$-gates, and any two consecutive
gate crossings always happen with different $b$-gates.

We first consider the special case $0<b=\alpha<1$ and $\tau=0$.
For $N\ge2$, we have
\begin{equation}\label{eq1.2}
\Psi(\alpha;0;b;N)=\lceil(N-1)\alpha\rceil,
\end{equation}
where $\lceil\beta\rceil$ denotes the upper integral part of a real number~$\beta$.
To see this, consider the numbers
\begin{equation}\label{eq1.3}
\tau+q\alpha,
\quad
0\le q\le N-1.
\end{equation}
Clearly they fall into the interval $[0,\lceil(N-1)\alpha\rceil]$.
Now for every integer $n$ satisfying $0\le n<\lceil(N-1)\alpha\rceil$, there is a unique number in \eqref{eq1.3} such that $\tau+q\alpha\in[n,n+\alpha)$, so that $0\le s_q<b$.
On the other hand, $0\le s_q<b$ if and only if $\tau+q\alpha\in[n,n+\alpha)$ for some integer $n$ satisfying $0\le n<\lceil(N-1)\alpha\rceil$.

A somewhat similar argument shows that for every integer $N\ge2$, we have
\begin{equation}\label{eq1.4}
\Psi(\alpha;\tau;b;N)=\left\{\begin{array}{ll}
\lceil\{\tau\}+(N-1)\alpha\rceil,&\mbox{if $0\le\{\tau\}<b$},\\
\lfloor\{\tau\}+(N-1)\alpha\rfloor,&\mbox{if $b\le\{\tau\}<1$},
\end{array}\right.
\end{equation}
where $\lfloor\beta\rfloor$ denotes the lower integral part of a real number~$\beta$.
Note that in the first case $0\le\{\tau\}<b$, the first term $s_0<b$, whereas in the second case $b\le\{\tau\}<1$, the first term $s_0\ge b$.

We next consider the special case $0<b=\{2\alpha\}<1$ and $\tau=0$.
Here we apply \eqref{eq1.4} to each of the two subsequences
\begin{displaymath}
s_0,s_2,s_4,s_6,\ldots
\quad\mbox{and}\quad
s_1,s_3,s_5,s_7,\ldots,
\end{displaymath}
with the same gap $b=\{2\alpha\}$.

Suppose first that $0<\alpha<1/2$, so that $b=\{2\alpha\}=2\alpha$.
Then
\begin{equation}\label{eq1.5}
\Psi(\alpha;0;\{2\alpha\};N)
=\left\lceil\left\lfloor\frac{N-1}{2}\right\rfloor2\alpha\right\rceil
+\left\lceil\alpha+\left\lfloor\frac{N-2}{2}\right\rfloor2\alpha\right\rceil.
\end{equation}
Note here that $s_1=\alpha<2\alpha=b$.

Suppose next that $1/2<\alpha<1$, so that $b=\{2\alpha\}=2\alpha-1$.
Then
\begin{equation}\label{eq1.6}
\Psi(\alpha;0;\{2\alpha\};N)
=\left\lceil\left\lfloor\frac{N-1}{2}\right\rfloor(2\alpha-1)\right\rceil
+\left\lfloor\alpha+\left\lfloor\frac{N-2}{2}\right\rfloor(2\alpha-1)\right\rfloor.
\end{equation}
Note here that $s_1=\alpha>2\alpha-1=b$.

For the special case $0<b=\{3\alpha\}<1$, we apply \eqref{eq1.4} separately to each of the three subsequences
\begin{displaymath}
s_0,s_3,s_6,s_9,\ldots,
\quad
s_1,s_4,s_7,s_{10},\ldots,
\quad
s_2,s_5,s_8,s_{11},\ldots,
\end{displaymath}
with the same gap $b=\{3\alpha\}$.

And so on.
In general, for any $b=\{m\alpha\}$, where $m\in\Zz$ is non-zero, we obtain an analogous explicit formula for $\Psi(\alpha;\tau;b;N)$, and this gets more complicated as $m$ increases.
Nevertheless, it is not difficult to determine from such an explicit formula the parity of $\Psi(\alpha;\tau;b;N)$, and this parity tells us \textit{which} square contains the particle.
This explains why, on the one hand, we say that this special case when $b=\{m\alpha\}$ for some non-zero integer $m\in\Zz$ is \textit{simple}.
More precisely, we may call it a non-integrable dynamical system with \textit{very low algorithmic complexity}.

On the other hand, this special case is still quite difficult.
For instance, even in the totally innocent looking special case $0<b=\{2\alpha\}$ with $1/2<\alpha<1$, it is not easy at all to determine whether a
half-infinite $\alpha$-geodesic is equidistributed on the whole $2$-square-$b$ surface.

We conclude Section~\ref{sec1} with a simple technical observation.
To study the special case $b=\{m\alpha\}$ for some non-zero integer $m\in\Zz$, it suffices to consider only positive integers~$m$.
This follows on combining the trivial identity $b=\{m\alpha\}=\{-m(1-\alpha)\}$ with the following result.

\begin{lem}\label{lem11}
A half-infinite $\alpha$-geodesic on the $2$-square-$b$ surface with starting point $(0,x)$ lying on the far left vertical edge of the surface is equidistributed on the surface if and only if the half-infinite $(1-\alpha)$-geodesic with starting point $(0,\{b+1-x\})$ lying on the same far left vertical edge of the surface is equidistributed on the surface.
\end{lem}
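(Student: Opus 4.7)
The idea is to produce an explicit area-preserving involution $\phi$ of the $2$-square-$b$ surface that conjugates the slope-$\alpha$ flow to the slope-$(-\alpha)$ flow and that carries $(0,x)$ to $(0,\{b+1-x\})$. Once this is in hand, the equivalence between slope $-\alpha$ and slope $1-\alpha$ noted in the paragraph immediately preceding the lemma (an integer shift in slope does not alter the sequence of vertical-edge intersection points, hence does not alter whether the geodesic is equidistributed) finishes the proof.

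The natural choice is
\[
\phi(x,y)=(x,\{b+1-y\}),
\]
i.e., the vertical reflection on the $y$-circle $\Rr/\Zz$ whose two fixed points are the centres of the $b$-gate and of the complementary wall on the middle edge. First I would verify that $\phi$ is a well-defined area-preserving involution of the $2$-square-$b$ surface. Because $\phi$ fixes $x$, it maps each of the three vertical edges to itself and is trivially compatible with the identification of the far-left and far-right copies of $v_1$. The key point is that $y\mapsto\{b+1-y\}$ sends $[0,b]$ (the $b$-gate on the middle edge) orientation-reversingly to itself and $[b,1]$ (the wall) orientation-reversingly to itself, so both the identification of the two sides of the gate and the identifications on the two sides of the wall survive $\phi$. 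The horizontal identification $y=0\sim y=1$ is automatic since $\phi$ is defined modulo~$1$ in~$y$.

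Next, since $\phi$ is a Euclidean vertical reflection in each square, the direction vector $(1,\alpha)$ is carried pointwise to $(1,-\alpha)$; because $\phi$ respects every edge identification, this globalises to the statement that the $\phi$-image of the forward $\alpha$-geodesic issuing from $(0,x)$ is exactly the forward $(-\alpha)$-geodesic issuing from $\phi(0,x)=(0,\{b+1-x\})$. As $\phi$ is measure-preserving, the asymptotic proportion of time any orbit spends in a Jordan-measurable set $A$ equals the proportion its $\phi$-image spends in $\phi(A)$, so the two half-infinite geodesics are simultaneously equidistributed on the surface. Applying the $\alpha\leftrightarrow\alpha+n$ observation with $n=1$ to the $(-\alpha)$-geodesic, the latter is equidistributed if and only if the $(1-\alpha)$-geodesic from $(0,\{b+1-x\})$ is equidistributed. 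Chaining the two equivalences proves the lemma. The only real work is the verification that $\phi$ respects the gate/wall identifications on the middle vertical edge, which is routine once the correct reflection axis has been identified; everything else is immediate.
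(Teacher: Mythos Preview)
Your proof is correct and is essentially the paper's argument repackaged: the paper performs the integer slope shift first and then factors your involution $\phi$ into a reflection across $y=1/2$ (landing on an auxiliary reflected copy of the surface) followed by a vertical translation by $b$ (returning to the original surface), whereas you combine reflection and translation into the single self-map $\phi(x,y)=(x,\{b+1-y\})$ and apply the slope shift at the end. The only substantive verification---that the map respects the gate/wall identifications on the middle edge---is the same in both presentations, and your choice of reflection axis through the centres of the gate and of the wall is exactly what makes this work.
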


\begin{proof}
The proof follows from combining three simple transformations.

The first simple transformation is illustrated in Figure~1.8.
It maps an $\alpha$-geodesic with starting point $(0,x)$ on the far left vertical edge of the $2$-square-$b$ surface to an $(\alpha-1)$-geodesic with the same starting point $(0,x)$ on the far left vertical edge of the $2$-square-$b$ surface.
It is clear that they hit the same point $(1,\{x+\alpha\})$ on the middle vertical line of the $2$-square-$b$ surface.
The first geodesic is equidistributed on the $2$-square-$b$ surface if and only if the second geodesic is equidistributed on the $2$-square-$b$ surface.

\begin{displaymath}
\begin{array}{c}
\includegraphics{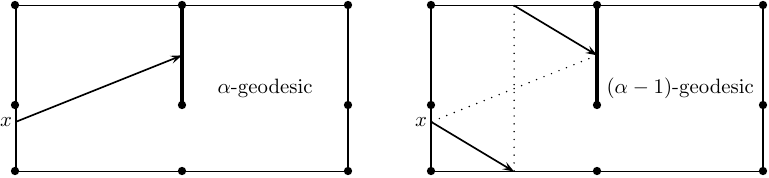}
\\
\mbox{Figure 1.8: $\alpha$-geodesic and $(\alpha-1)$-geodesic}
\end{array}
\end{displaymath}

The second simple transformation is illustrated in Figure~1.9.
It maps an $(\alpha-1)$-geodesic with starting point $(0,x)$ on the far left vertical edge of the $2$-square-$b$ surface to a $(1-\alpha)$-geodesic with starting point $(0,1-x)$ on the far left vertical edge of the $2$-square-$b$ surface reflected across the horizontal line $y=1/2$.
It is clear that the first geodesic hits the point $(1,\{x+\alpha\})$ on the middle vertical line of the $2$-square-$b$ surface, whereas the second geodesic hits the point $(1,1-\{x+\alpha\})$ on the middle vertical line of the $2$-square-$b$ surface reflected across the horizontal line $y=1/2$.
The first geodesic is equidistributed on the $2$-square-$b$ surface if and only if the second geodesic is equidistributed on the $2$-square-$b$ surface reflected across the horizontal line $y=1/2$.

\begin{displaymath}
\begin{array}{c}
\includegraphics{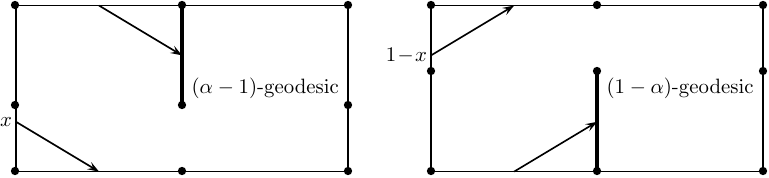}
\\
\mbox{Figure 1.9: $(\alpha-1)$-geodesic and $(1-\alpha)$-geodesic}
\end{array}
\end{displaymath}

The third simple transformation is illustrated in Figure~1.10, which also shows that the $2$-square-$b$ surface can be recovered from the
$2$-square-$b$ surface reflected across the horizontal line $y=1/2$ by a vertical translation by $b$ modulo~$1$.
It now maps a $(1-\alpha)$-geodesic with starting point $(0,1-x)$ on the far left vertical edge of the $2$-square-$b$ surface reflected across the horizontal line $y=1/2$ to a $(1-\alpha)$-geodesic with starting point $(0,\{b+1-x\})$ on the far left vertical edge of the
$2$-square-$b$ surface.
It is clear that the two geodesics hit corresponding points on the middle vertical line of their respective $2$-square-$b$ surfaces.
The first geodesic is equidistributed on the $2$-square-$b$ surface reflected across the horizontal line $y=1/2$ if and only if the second geodesic is equidistributed on the $2$-square-$b$ surface.

\begin{displaymath}
\begin{array}{c}
\includegraphics{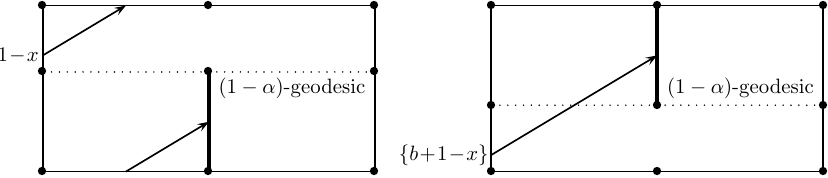}
\\
\mbox{Figure 1.10: vertical translation by $b$ modulo~$1$}
\end{array}
\end{displaymath}

This completes the proof.
\end{proof}

\begin{remark}
Strictly speaking, the $2$-square-$b$ surface reflected across the horizontal line $y=1/2$ followed by a vertical translation by $b$ modulo~$1$ leads to another copy of the $2$-square-$b$ surface if and only if the gates are open intervals or closed intervals.
However, for formulas such as \eqref{eq1.2}, \eqref{eq1.4}, \eqref{eq1.5} and \eqref{eq1.6} to hold precisely, the gates and barriers need to be intervals that are closed at the bottom end and open at the top end.
In any case, an $\alpha$-geodesic can hit any singularity of the $2$-square-$b$ surface at most once, so equidistribution is not affected by altering the openness or closedness of the gates.
\end{remark}

%
%

\section{Some interesting special cases,
and polygonal invariant sets}\label{sec2}

We briefly consider the special case $b=\{m\alpha\}$ for some non-zero integer $m\in\Zz$.
As explained in Section~\ref{sec1}, we may assume that $m$ is positive and $0<\alpha<1$.

\begin{casem1}
In the special case $0<b=\alpha<1$, we can show equidistribution for any half-infinite $\alpha$-geodesic with irrational~$\alpha$.
Here is a relatively simple proof.
The idea is summarized in Figure~2.1.

\begin{displaymath}
\begin{array}{c}
\includegraphics{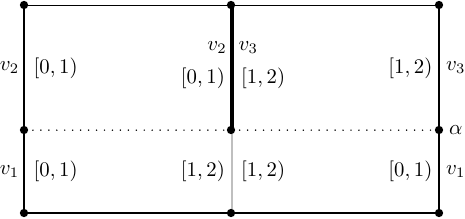}
\\
\mbox{Figure 2.1: the case $0<b=\alpha<1$}
\end{array}
\end{displaymath}

Consider the sequence $\tau+q\alpha$, $q=0,1,2,3,\ldots.$
Without loss of generality, we can assume that $0\le\tau<1$.
Consider a geodesic $\LLL$ on the $2$-square-$b$ surface with slope~$\alpha$, starting at a point on the left vertical edge at height~$\tau$, and hitting the vertical edges of the $2$-square-$b$ surface successively at height $\{\tau+q\alpha\}$, $q=0,1,2,3,\ldots.$
For every such integer~$q$, consider the following assertion:

\begin{itemize}
\item[$P(q)$:]
The condition $\tau+q\alpha\bmod{2}$ is in $[0,1)$ corresponds to a hitting point on the $2$-square-$b$ surface on the left vertical edge, or on the left side of the middle vertical edge above the gate, or on the right vertical edge at the gate, while the condition $\tau+q\alpha\bmod{2}$ is in $[1,2)$ corresponds to a hitting point on the $2$-square-$b$ surface on the right vertical edge above the gate, or on the right side of the middle vertical edge above the gate, or on the middle vertical edge at the gate.
\end{itemize}

It is clear that $P(0)$ holds by definition.
Assume now that $P(k)$ holds for some integer~$k$.

Suppose first that $\tau+k\alpha\bmod{2}$ is in $[0,1)$.
In view of vertical edge identification, we may assume without loss of generality that the corresponding hitting point of $\LLL$ lies on the left vertical edge.
We have one of the following two possibilities:

(i)
If $\tau+(k+1)\alpha\bmod2$ is in $[0,1)$, then since $0<\alpha<1$, we must have
\begin{equation}\label{eq2.1}
\{\tau+(k+1)\alpha\}=\{\tau+k\alpha\}+\alpha.
\end{equation}
It follows that $\alpha\le\{\tau+(k+1)\alpha\}<1$, so that the corresponding hitting point of $\LLL$ is on the left side of the middle vertical edge above the gate, and so $P(k+1)$ holds.

(ii)
If $\tau+(k+1)\alpha\bmod2$ is in $[1,2)$, then since $0<\alpha<1$, we must have
\begin{equation}\label{eq2.2}
\{\tau+(k+1)\alpha\}+1=\{\tau+k\alpha\}+\alpha.
\end{equation}
It follows that $0\le\{\tau+(k+1)\alpha\}<\alpha$, so that the corresponding hitting point of $\LLL$ is on the middle vertical edge at the gate, and so $P(k+1)$ holds.

Suppose next that $\tau+k\alpha\bmod{2}$ is in $[1,2)$.
In view of vertical edge identification, we may assume without loss of generality that the corresponding hitting point of $\LLL$ lies on the right side of the middle vertical edge above the gate, or on the middle vertical edge at the gate.

(i)
If $\tau+(k+1)\alpha\bmod2$ is in $[0,1)$, then since $0<\alpha<1$, we must have \eqref{eq2.2}.
It follows that $0\le\{\tau+(k+1)\alpha\}<\alpha$, so that the corresponding hitting point of $\LLL$ is on the right vertical edge at the gate, and so $P(k+1)$ holds.

(ii)
If $\tau+(k+1)\alpha\bmod2$ is in $[1,2)$, then since $0<\alpha<1$, we must have \eqref{eq2.1}.
It follows that $\alpha\le\{\tau+(k+1)\alpha\}<1$, so that the corresponding hitting point of $\LLL$ is on the right vertical edge above the gate, and so $P(k+1)$ holds.

Thus the statement $P(q)$ holds for every $q=0,1,2,3,\ldots.$

Finally, note that the sequence $\tau+q\alpha$, $q=0,1,2,3,\ldots,$ is uniformly distributed in the double interval $[0,2)$.
\end{casem1}

Next come some surprises.

\begin{casem2}
A pleasant first surprise comes from the special case $b=\{2\alpha\}$ with $0<\alpha<1/2$, so that $0<b=2\alpha<1$.
Figure~2.2 summarizes a very quick proof that any $\alpha$-geodesic on the $2$-square-$b$ surface is not dense or equidistributed.

\begin{displaymath}
\begin{array}{c}
\includegraphics{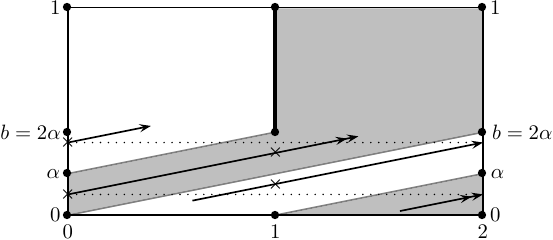}
\\
\mbox{Figure 2.2: the case $b=\{2\alpha\}$ with $0<\alpha<1/2$}
\end{array}
\end{displaymath}

Note that any $\alpha$-geodesic on the $2$-square-$b$ surface \textit{modulo~$1$} reduces to a torus line of slope $\alpha$ in the unit square, and we know that this projected torus line is uniformly distributed as long as $\alpha$ is irrational.
On the other hand, Figure~2.2 shows two invariant subsets of the $2$-square-$b$ surface under geodesic flow of slope~$\alpha$.
It is easy to see that any $\alpha$-geodesic that passes through the shaded part of the $2$-square-$b$ surface remains forever in the shaded part and never reaches the white part, and \textit{vice versa}, so it is not dense on the $2$-square-$b$ surface.

It is easy to see that an $\alpha$-geodesic in the shaded part has \textit{visit density} $\alpha$ on the left square of the surface and $1-\alpha$ on the right square of the surface.
Likewise, an $\alpha$-geodesic in the white part has \textit{visit density} $\alpha$ on the right square of the surface and $1-\alpha$ on the left square of the surface.
Since $\alpha\ne1/2$, this means that there cannot possibly be equidistribution.

Note also from Figure~2.2 that the \textit{square-crossings}, \textit{i.e.}, instances of passing from one square to the other, occur in pairs along any
$\alpha$-geodesic.

\begin{remark}
It is easy to see that a similar argument works for the special case $b=\{m\alpha\}$ for any even positive integer~$m$ with $0<\alpha<1/m$.
Any $\alpha$-geodesic on the $2$-square-$b$ surface is not dense or equidistributed.
\end{remark}

A second surprise is that the case $b=\{2\alpha\}$ with $1/2<\alpha<1$ turns out to be completely different from when $0<\alpha<1/2$.
In this case, every half-infinite $\alpha$-geodesic on the $2$-square-$b$ surface with irrational $\alpha$ is equidistributed.
We do not have a quick proof of this result.
It follows instead from the general Theorem~\ref{thm21} which we shall state later in this section. 
This general result has a fairly non-trivial proof.
\end{casem2}

\begin{casem3}
The special case $b=\{3\alpha\}$ gives rise to equidistribution for every half-infinite $\alpha$-geodesic on the $2$-square-$b$ surface with irrational~$\alpha$.
Again, we do not know a quick proof, and refer the reader to Theorem~\ref{thm21}.
\end{casem3}

Next come more surprises.

\begin{casem4}
Let us first consider the special case $b=\{4\alpha\}$ with
\begin{displaymath}
0<\alpha<\tfrac{1}{4}
\quad\mbox{or}\quad
\tfrac{1}{2}<\alpha<\tfrac{2}{3}
\quad\mbox{or}\quad
\tfrac{2}{3}<\alpha<\tfrac{3}{4}.
\end{displaymath}
Figures 2.3--2.5 summarize very quick proofs that any $\alpha$-geodesic on the $2$-square-$b$ surface is not dense or equidistributed.

In Figure~2.3, an $\alpha$-geodesic in the shaded part has visit density $2\alpha$ on the left square of the surface and $1-2\alpha$ on the right square of the surface.

\begin{displaymath}
\begin{array}{c}
\includegraphics{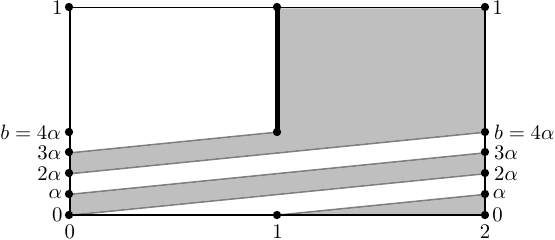}
\\
\mbox{Figure 2.3: the case $b=\{4\alpha\}$ with $0<\alpha<1/4$}
\end{array}
\end{displaymath}

In Figure~2.4, an $\alpha$-geodesic in the shaded part has visit density
\begin{displaymath}
(\{3\alpha\}-\alpha)+\{2\alpha\}
=(3\alpha-1-\alpha)+(2\alpha-1)
=4\alpha-2
\end{displaymath}
on the left square of the surface and
\begin{align}
&
(1-\{3\alpha\})+(\alpha-\{4\alpha\})+\{2\alpha\}
\nonumber
\\
&\quad
=(1-3\alpha+1)+(\alpha-4\alpha+2)+(2\alpha-1)
=3-4\alpha
\nonumber
\end{align}
on the right square of the surface.

\begin{displaymath}
\begin{array}{c}
\includegraphics{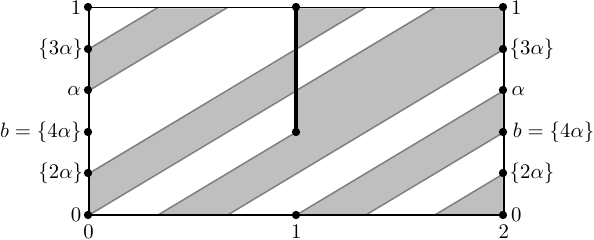}
\\
\mbox{Figure 2.4: the case $b=\{4\alpha\}$ with $1/2<\alpha<2/3$}
\end{array}
\end{displaymath}

In Figure~2.5, an $\alpha$-geodesic in the shaded part has visit density
\begin{displaymath}
(\alpha-\{2\alpha\})+\{3\alpha\}
=(\alpha-2\alpha+1)+(3\alpha-2)
=2\alpha-1
\end{displaymath}
on the left square of the surface and
\begin{align}
&
(1-\{4\alpha\})+(\alpha-\{2\alpha\})+\{3\alpha\}
\nonumber
\\
&\quad
=(1-4\alpha+2)+(\alpha-2\alpha+1)+(3\alpha-2)
=2-2\alpha
\nonumber
\end{align}
on the right square of the surface.

\begin{displaymath}
\begin{array}{c}
\includegraphics{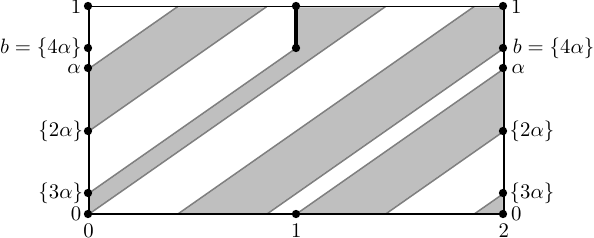}
\\
\mbox{Figure 2.5: the case $b=\{4\alpha\}$ with $2/3<\alpha<3/4$}
\end{array}
\end{displaymath}

However, for the special case $b=\{4\alpha\}$ with $1/4<\alpha<1/2$ or $3/4<\alpha<1$, every half-infinite $\alpha$-geodesic on the $2$-square-$b$ surface with irrational $\alpha$ is equidistributed.
Again, we do not know a quick proof, and refer the reader to Theorem~\ref{thm21}.
\end{casem4}

At first sight this case study may seem hopelessly complicated and mysterious.
However, there is a simple underlying rule that explains everything.
We call this the \textit{Double Even Criterion}.

If the Double Even Criterion fails, then every half-infinite $\alpha$-geodesic on the $2$-square-$b$ surface with irrational $\alpha$ is equidistributed.
This case forms the hard part of the case $n=2$ of Theorem~\ref{thm21}.

On the other hand, if the Double Even Criterion holds, then there is a reasonably simple algorithm to construct $2$ non-trivial $\alpha$-flow invariant subsets of the $2$-square-$b$ surface.
Clearly density and equidistribution for any $\alpha$-geodesic on the $2$-square-$b$ surface are impossible.

Let $b=\{m\alpha\}$, where $m\ge2$ is an integer and $\alpha$ is an irrational number satisfying $0<\alpha<1$.
We take the parameter $\Upsilon(m;\alpha)$ to denote the total number of integers $q$ such that $1\le q\le m$ and $\{q\alpha\}<\alpha$.
For example, as clearly shown in Figures 2.3--2.5, we have
\begin{displaymath}
\Upsilon(4;\alpha)=\left\{\begin{array}{ll}
0,&\mbox{if $0<\alpha<1/4$},\\
2,&\mbox{if $1/2<\alpha<2/3$},\\
2,&\mbox{if $2/3<\alpha<3/4$}.
\end{array}\right.
\end{displaymath}

\begin{dec}
The integer $m\ge2$ and the parameter $\Upsilon(m;\alpha)$ are both even.
\end{dec}

The Double Even Criterion is a special case of a more general criterion which applies to the $n$-square-$b$ surface for any integer $n\ge2$, the natural generalization of the $2$-square-$b$ surface to a surface consisting of a horizontal row of $n$ consecutive unit squares with $n-1$ $b$-size gates between the squares, and with appropriate edge identification.
The $3$-square-$b$ surface is shown in Figure~2.6.

\begin{displaymath}
\begin{array}{c}
\includegraphics{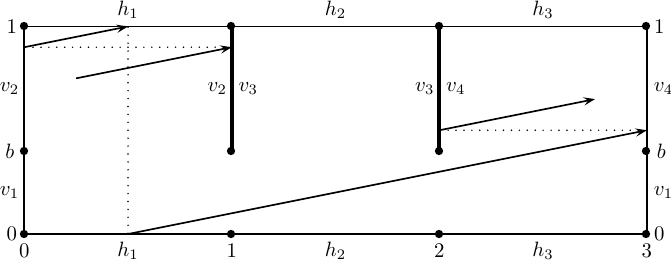}
\\
\mbox{Figure 2.6: the $3$-square-$b$ surface}
\end{array}
\end{displaymath}

\begin{gcdc}
For the $n$-square-$b$ surface with $b=\{m\alpha\}$, the greatest common divisor $d$ of the three integers $n$, $m$ and $\Upsilon(m;\alpha)$ satisfies $d>1$.
\end{gcdc}

If the GCD Criterion fails, then every half-infinite $\alpha$-geodesic on the $n$-square-$b$ surface with irrational $\alpha$ is equidistributed.
This case forms the hard part of Theorem~\ref{thm21}.

On the other hand, if the GCD Criterion holds with greatest common divisor $d>1$, then there is a reasonably simple algorithm to construct $d$ non-trivial
$\alpha$-flow invariant subsets of the $n$-square-$b$ surface.
Clearly density and equidistribution for any $\alpha$-geodesic on the $n$-square-$b$ surface are impossible.
This case is relatively short, and we discuss it now.

Suppose that the GCD Criterion holds.
We now show how we can construct $d$ non-trivial $\alpha$-flow invariant subsets of the $n$-square-$b$ surface.

Consider the finite sequence
\begin{displaymath}
0,\{\alpha\},\{2\alpha\},\ldots,\{m\alpha\}=b
\end{displaymath}
of $m+1$ terms, and arrange it in increasing order
\begin{equation}\label{eq2.3}
0=b_0<b_1<\ldots<b_\Upsilon<b_{\Upsilon+1}=\alpha<b_{\Upsilon+2}<\ldots<b_m,
\end{equation}
where the index $\Upsilon$ denotes the parameter $\Upsilon=\Upsilon(m;\alpha)$,
\begin{displaymath}
b_m=\max_{1\le q\le m}\{q\alpha\}<1,
\end{displaymath}
and $b$ is one of the elements in \eqref{eq2.3}, so that $b=b_\nu$ for some $\nu=1,\ldots,m$.

If we remove the term $b_\nu$ from the sequence \eqref{eq2.3}, then we obtain a subsequence
\begin{equation}\label{eq2.4}
0=b'_0<b'_1<\ldots<b'_{m-1}
\end{equation}
of $m$ terms, where for every integer $j=0,\ldots,m-1$,
\begin{displaymath}
b'_j=\left\{\begin{array}{ll}
b_j,&\mbox{if $0\le j<\nu$},\\
b_{j+1},&\mbox{if $\nu\le j\le m-1$}.
\end{array}\right.
\end{displaymath}
Note that the elements of the subsequence \eqref{eq2.4} are in one-to-one correspondence with the collection of division points
\begin{equation}\label{eq2.5}
\{q\alpha\},
\quad
q=0,1,\ldots,m-1.
\end{equation}
This subsequence also leads to a partition of the unit interval $[0,1)$ into $m$ intervals
\begin{equation}\label{eq2.6}
I_j=[b'_j,b'_{j+1}),
\quad
j=0,1,\ldots,m-1,
\end{equation}
with the convention that $b'_m=1$.

Since $d$ divides~$m$, we can color the intervals \eqref{eq2.6} from top to bottom with distinct colors $\frakc_1,\ldots,\frakc_d$, repeated periodically $m/d$ times.

We now proceed to $d$-color the $n$-square-$b$ surface as follows.

\begin{dpca}
Suppose that the integer $d$ divides both $n$ and~$m$.
Let $\frakc_1,\ldots,\frakc_d$ denote $d$ distinct colors.

(1)
Suppose that $\ell=1,\ldots,d$.
Identify the left vertical edge of the $\ell$-th square face of the $n$-square-$b$ surface with the interval $[0,1)$, consisting of the $m$ intervals \eqref{eq2.6}.
We color these intervals from top to bottom by the colors $\frakc_\ell,\ldots,\frakc_d,\frakc_1,\ldots,\frakc_{\ell-1}$, repeated periodically $m/d$ times.
This clearly gives rise to a periodic $d$-coloring of this edge.
Using the $\alpha$-flow, we can extend this periodic $d$-coloring to a $d$-coloring $C(\ell)$ of the $\ell$-th square face of the $n$-square-$b$ surface.

(2)
We then $d$-color the other square faces of the $n$-square-$b$ surface by repeating the $d$-colorings $C(1),\ldots,C(d)$ of the first $d$ square faces periodically $n/d$ times.
\end{dpca}

\begin{remark}
The $m\times n$ array
\begin{equation}\label{eq2.7}
\begin{array}{cccccccccc}
&&C(1)&C(2)&C(3)&\ldots&C(d-1)&C(d)&&\ldots
\vspace{5pt}\\
I_{m-1}&&\frakc_1&\frakc_2&\frakc_3&\ldots&\frakc_{d-1}&\frakc_d&&\ldots\\
I_{m-2}&&\frakc_2&\frakc_3&\frakc_4&\ldots&\frakc_d&\frakc_1&&\ldots\\
I_{m-3}&&\frakc_3&\frakc_4&\frakc_5&\ldots&\frakc_1&\frakc_2&&\ldots\\
\vdots&&\vdots&\vdots&\vdots&&\vdots&\vdots\\
I_{m-d+1}&&\frakc_{d-1}&\frakc_d&\frakc_1&\ldots&\frakc_{d-3}&\frakc_{d-2}&&\ldots\\
I_{m-d}&&\frakc_d&\frakc_1&\frakc_2&\ldots&\frakc_{d-2}&\frakc_{d-1}&&\ldots
\vspace{5pt}\\
\vdots&&\vdots&\vdots&\vdots&&\vdots&\vdots
\end{array}
\end{equation}
shows the coloring on each subinterval of the left vertical edge of each square face of the $n$-square-$b$ surface.
The $d\times d$ sub-array on the top left repeats throughout the whole array, with periodicty of the coloring vertically and horizontally.
This explains the terminology Double Periodic Coloring Algorithm.
\end{remark}

It becomes particularly interesting if the GCD Criterion holds, so that the integer $d$ also divides the parameter $\Upsilon(m;\alpha)$.
Note that this is the case for $n=2$ in each of Figures 2.2--2.5, and in each case, we are able to give $2$ non-trivial $\alpha$-flow invariant subsets of the $2$-square-$b$ surface.
The next lemma is a far-reaching generalization of this observation.

\begin{lem}\label{lem21}
Suppose that an integer $d$ divides both $n$ and~$m$.
Then the Double Periodic Coloring Algorithm gives rise to $d$ non-trivial $\alpha$-flow invariant subsets of the $n$-square-$b$ surface if and only if $d$ also divides $\Upsilon(m;\alpha)$.
\end{lem}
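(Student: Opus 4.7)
The plan is to reduce the invariance of the color classes produced by the algorithm to a combinatorial identity on the interval indices, and then analyze this identity via the action of the rotation $\rho(h)=\{h+\alpha\}$ on the partition $\{I_j\}$ from (2.6). Inside each square the coloring is constant along $\alpha$-lines, so invariance can only fail at two right-edge identifications: \emph{gate crossings} when $\rho(h)\in[0,b)$, where the geodesic enters the left edge of $S_{\ell+1}$ at height $\rho(h)$, and \emph{barrier bounces} when $\rho(h)\in[b,1)$, where the geodesic re-enters the left edge of the same $S_\ell$ at height $\rho(h)$. Under the algorithm's formula, the color of height $h$ on $S_\ell$'s left edge equals $(\ell-j(h)-2)\bmod d$, where $h\in I_{j(h)}$ and $d\mid m$ absorbs a factor of $m$. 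A short computation shows that both invariance conditions collapse to the single identity
\[
j(\{h+\alpha\})-j(h)\equiv \delta(h)\pmod d \quad \text{for almost every }h,
\]
with $\delta(h)=1$ for a gate crossing and $\delta(h)=0$ for a barrier bounce.

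To determine the shift produced by the rotation, I will use that $\rho$ preserves cyclic order, so the sorted images $\rho(b'_0),\dots,\rho(b'_{m-1})$ of the endpoints of (2.4) appear in the sorted list of $\rho(E)=\{\{q\alpha\}:1\le q\le m\}$ as a cyclic shift. The length of this shift equals the number of elements of $\rho(E)$ strictly less than $\alpha=\rho(b'_0)$, which is precisely $\Upsilon(m;\alpha)$ by definition. Consequently $\rho(I_j)=I^*_{(j+\Upsilon)\bmod m}$, where $\{I^*_k\}$ denotes the image partition of $[0,1)$ with endpoints $\rho(E)$.

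The original and image partitions differ only locally at the points $b$ and $0$: letting $\{\tilde I_i\}_{i=0}^m$ be the common refinement with endpoints the sorted sequence (2.3), the original partition has a merged interval $I_{\nu-1}=\tilde I_{\nu-1}\cup\tilde I_\nu$ straddling $b$, and the image partition has a merged interval $I^*_{m-1}=\tilde I_m\cup\tilde I_0$ straddling $0$. Writing $j(h)=i(h)-\epsilon(h)$ with $i(h)$ the refinement index and $\epsilon(h)=\mathbf{1}_{[b,1)}(h)$, a case-by-case inspection according to which $\tilde I_i$ contains $\rho(h)$ yields
\[
j(\{h+\alpha\})-j(h)-\delta(h)\equiv \pm\Upsilon\pmod m.
\]
Since $d\mid m$, this vanishes modulo $d$ precisely when $d\mid\Upsilon$; in that case every right-edge event preserves color, so the $d$ color classes are the required non-trivial $\alpha$-flow invariant subsets. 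If instead $d\nmid\Upsilon$, then in any region where the left-hand side equals exactly $\pm\Upsilon$ a generic $h$ exhibits a color mismatch between $(0,h)$ in $S_\ell$ and $(0,\{h+\alpha\})$ in its image, showing that the color classes are not invariant. The main technical obstacle is the bookkeeping across the special refinement intervals $\tilde I_{\nu-1}, \tilde I_\nu, \tilde I_m, \tilde I_0$ and the single interval straddled by the wrap point $1-\alpha$ of the rotation: each such region contributes a $\pm 1$ correction to the relation $j(h)=i(h)$, and combining these requires a slightly tedious enumeration to confirm the congruence above in all sub-cases.
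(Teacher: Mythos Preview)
Your approach is correct and conceptually close to the paper's: both reduce invariance to checking that the periodic $d$-coloring is preserved under the $\alpha$-shift on the left vertical edge, with a one-step color advance through the gate and none through the barrier. The paper packages this as the equality of two auxiliary colorings $C^*$ (the $\alpha$-translate of the coloring $C_0$) and $C^{**}$ (the coloring $C_0$ with a one-step shift on $[0,b)$ only), and verifies it by inspecting the single interval $[\alpha,b_{\Upsilon+2})$, splitting into the cases $b<\alpha$ and $b>\alpha$. Your route is more index-theoretic: you compute directly the cyclic shift that $\rho$ induces on the interval labels and compare it with $\delta(h)$.

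The one place where your write-up is looser than it needs to be is the ending. No ``slightly tedious enumeration'' is required, and the sign ambiguity in $\pm\Upsilon$ is spurious. With your own notation one has $b^*_k=b_{k+1}$, so the $I^*$-index satisfies $k(h')\equiv i(h')-1\pmod m$ uniformly (the wrap-around interval $I^*_{m-1}=\tilde I_m\cup\tilde I_0$ picks up $i=0$ and $i=m$ consistently with this formula). Combining this with $k(\rho(h))\equiv j(h)+\Upsilon\pmod m$, with $j(h)=i(h)-\epsilon(h)$, and with $\delta(h)=1-\epsilon(\rho(h))$, a single line gives
\[
j(\rho(h))-j(h)-\delta(h)
=\bigl(i(\rho(h))-\epsilon(\rho(h))\bigr)-j(h)-\bigl(1-\epsilon(\rho(h))\bigr)
\equiv\Upsilon\pmod m,
\]
valid for every $h$ off the finite set of partition endpoints. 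No separate treatment of the wrap point $1-\alpha$ or of the special refinement intervals is needed, because the congruence $k(h')\equiv i(h')-1\pmod m$ already absorbs them. Once you carry this computation through, your argument is in fact cleaner than the paper's case split.
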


\begin{proof}
The $d$-coloring $C(1)$ from the Double Periodic Coloring Algorithm also gives the periodic $d$-coloring $C_0$ of the far left vertical edge of the $n$-square-$b$ surface, viewed as the unit torus $[0,1)$, with $m$ division points given by \eqref{eq2.4} or \eqref{eq2.5}.
In particular, the color pattern from the top is $\frakc_1,\ldots,\frakc_d$, with periodic repetition until it reaches the bottom.

Let $C^*$ denote a new $d$-coloring of the unit torus, obtained from $C_0$ by translating each point in $[0,1)$ by $\alpha$ modulo~$1$.
Noting \eqref{eq2.4} and \eqref{eq2.5}, it is clear that the division points of $C^*$ are given by
\begin{equation}\label{eq2.8}
\{q\alpha\},
\quad
q=1,\ldots,m.
\end{equation}
Thus the division points of $C_0$ and $C^*$ are essentially the same, apart from $0$ being replaced by $b=\{m\alpha\}$.

Let $C^{**}$ denote another new $d$-coloring of the unit torus, obtained from $C_0$ by keeping the colors in the interval $[b,1)=[\{m\alpha\},1)$ and replacing any color $\frakc_j$ in the interval $[0,b)=[0,\{m\alpha\})$ by the next color $\frakc_{j+1}$ along the chain $\frakc_1,\ldots,\frakc_d$ modulo~$d$.
Note that in $C^{**}$, the two sides of $0$ now have the same color, so $0$ is no longer a division point.
On the other hand, note that $b=\{m\alpha\}$ is not a division point of~$C_0$.
However, switching from $C_0$ to $C^{**}$, we switch the color below $b$ and keep the color above~$b$, so $b=\{m\alpha\}$ is clearly a division point of~$C^{**}$.

It follows that $C^*$ and $C^{**}$ are two $d$-colorings of the unit torus with precisely the same division points \eqref{eq2.8}.

We shall first show that the two $d$-colorings $C^*$ and $C^{**}$ are equal if and only if $d$ divides $\Upsilon(m;\alpha)$.
In view of the vertical periodicity of the $d$-colorings, to show that $C^*$ and $C^{**}$ are equal, it clearly suffices to check the equality of colors in just one interval.
We distinguish two cases.

Case 1:
Suppose that $b=\{m\alpha\}<\alpha$.
Since $b_{\Upsilon+1}=\alpha$ and $b=b_\nu<\alpha$, it follows that $1\le\nu\le\Upsilon$.
Recall that $b=b_\nu$ is not a division point of~$C_0$.
Hence
\begin{displaymath}
0=b_0<b_1<\ldots<b_{\nu-1}<b_{\nu+1}<\ldots<b_{\Upsilon+1}=\alpha
\end{displaymath}
are successive division points of~$C_0$.
Hence the intervals $[b_0,b_1)$ and $[b_{\Upsilon+1},b_{\Upsilon+2})$ have the same color $\frakc_d$ in $C_0$ if and only if $d$ divides~$\Upsilon$.
Next, note that the interval $[b_{\Upsilon+1},b_{\Upsilon+2})=[\alpha,b_{\Upsilon+2})$ is obtained from the interval $[b_0,b_1)=[0,b_1)$ by translation by $\alpha$ modulo~$1$.
It follows that $[b_{\Upsilon+1},b_{\Upsilon+2})$ has the same color $\frakc_d$ in $C^*$ as $[b_0,b_1)$ has in~$C_0$.
On the other hand, the interval $[b_{\Upsilon+1},b_{\Upsilon+2})=[\alpha,b_{\Upsilon+2})$ is not in the interval $[0,b)$, and so it has the same color in $C^{**}$ as in~$C_0$.
It now follows that the interval $[b_{\Upsilon+1},b_{\Upsilon+2})=[\alpha,b_{\Upsilon+2})$ has the same color $\frakc_d$ in $C^*$ as in $C^{**}$ if and only if $d$ divides $\Upsilon(m;\alpha)$.

Case 2:
Suppose that $b=\{m\alpha\}>\alpha$.
Since $b_{\Upsilon+1}=\alpha$ and $b=b_\nu>\alpha$, it follows that $\nu>\Upsilon+1$.
Hence
\begin{displaymath}
0=b_0<b_1<\ldots<b_{\Upsilon+1}=\alpha
\end{displaymath}
are successive division points of~$C_0$.
Hence the intervals $[b_0,b_1)$ and $[b_{\Upsilon+1},b_{\Upsilon+2})$ have different colors $\frakc_d$ and $\frakc_{d-1}$ respectively in $C_0$ if and only if $d$ divides~$\Upsilon$.
As in Case~1, $[b_{\Upsilon+1},b_{\Upsilon+2})$ has the same color $\frakc_d$ in $C^*$ as $[b_0,b_1)$ has in~$C_0$.
On the other hand, the interval $[b_{\Upsilon+1},b_{\Upsilon+2})=[\alpha,b_{\Upsilon+2})$ is in the interval $[0,b)$, and so its color in $C^{**}$ is the next color $\frakc_d$ along the chain $\frakc_1,\ldots,\frakc_d$ from its color $\frakc_{d-1}$ in~$C_0$.
It now follows that the interval $[b_{\Upsilon+1},b_{\Upsilon+2})=[\alpha,b_{\Upsilon+2})$ has the same color $\frakc_d$ in $C^*$ as in $C^{**}$ if and only if $d$ divides $\Upsilon(m;\alpha)$.

Finally, note that the equality of $C^*$ and $C^{**}$ and periodicity represent precisely the division of the $n$-square-$b$ surface into $d$ monochromatic sets that represent $d$ non-trivial $\alpha$-flow invariant subsets of the $n$-square-$b$ surface.
Indeed, $C^{**}$ exhibits the key difference between the intervals $[0,b)$ and $[b,1)$, that an $\alpha$-geodesic can freely cross the $b$-gate and is obstructed above it.
This completes the proof.
\end{proof}

\begin{remarks}
Lemma~\ref{lem21} basically says that from the viewpoint of equidistribution on the $n$-square-$b$ surface, the GCD Criterion can be considered an \textit{obstacle}.
Note, however, that any $\alpha$-geodesic with irrational $\alpha$ in any monochromatic subset of the $n$-square-$b$ surface is equidistributed in that subset.
We only need to recall that any $\alpha$-geodesic on the $n$-square-$b$ surface modulo~$1$ reduces to a torus line of slope $\alpha$ in the unit square.
Since $\alpha$ is irrational, this projected torus line is uniformly distributed in the unit square.

If the GCD Criterion holds, then we can always compute the corresponding visit densities, analogous to the cases illustrated in Figures~2.2--2.5.
It is not difficult to see that each visit density is necessarily of the form $u\alpha+v$, where $u,v\in\Zz$.
Since this is strictly between $0$ and~$1$, it follows that $u\ne0$, and since $\alpha$ is irrational, the visit density can never be equal to~$1/n$. 
Thus any half-infinite $\alpha$-geodesic on the $n$-square-$b$ surface is always unevenly distributed between the squares. 
\end{remarks}

Lemma~\ref{lem21} clearly establishes one half of the following result.

\begin{thm}\label{thm21}
Suppose that $b=\{m\alpha\}$, where $\alpha$ is irrational and $m$ is a positive integer.
Then any $\alpha$-geodesic on the $n$-square-$b$ surface is equidistributed on the surface if and only if the GCD Criterion fails.
\end{thm}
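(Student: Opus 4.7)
The plan is to prove the non-trivial direction of Theorem~\ref{thm21}: if the GCD Criterion fails, then every half-infinite $\alpha$-geodesic on the $n$-square-$b$ surface with irrational $\alpha$ is equidistributed on the surface. The opposite implication is already established by Lemma~\ref{lem21}, since if the GCD Criterion holds with $d=\gcd(n,m,\Upsilon(m;\alpha))>1$, then the Double Periodic Coloring Algorithm exhibits $d$ non-trivial $\alpha$-flow invariant subsets of the surface, which rules out equidistribution.

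To start, I would discretize the flow by tracking its successive intersections with the vertical edges of the surface; this produces a sequence of pairs $(\{\tau+q\alpha\},\ell(q))\in[0,1)\times\Zz/n\Zz$, where $\ell(q)$ records the index of the visited square. By Weyl's theorem, the first coordinate is equidistributed on $[0,1)$, so equidistribution of the geodesic on the $n$-square-$b$ surface is equivalent to joint equidistribution of the pairs on $[0,1)\times\Zz/n\Zz$. The square label obeys the one-step cocycle rule $\ell(q+1)-\ell(q)\equiv f(\{\tau+(q+1)\alpha\})\pmod n$, where $f=\mathbf{1}_{[0,b)}$ is the indicator of the gate region. Hence the question reduces to ergodicity of the $\Zz/n\Zz$-skew product $T(y,i)=(R_\alpha y,\,i+f(R_\alpha y))$ over the irrational rotation $R_\alpha$.

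A standard Fourier decomposition in the group variable shows that this skew product fails to be ergodic if and only if there exist a divisor $d\mid n$ with $d>1$ and a measurable function $h:[0,1)\to\Zz/d\Zz$ solving the cohomological equation $h(R_\alpha y)-h(y)\equiv-f(R_\alpha y)\pmod{d}$ a.e.; the level sets of $(y,i)\mapsto h(y)+i\bmod d$ would then yield $d$ non-trivial $\alpha$-flow invariant subsets of the $n$-square-$b$ surface. The heart of the proof is to show that any such $h$ forces both $d\mid m$ and $d\mid\Upsilon(m;\alpha)$, which together with $d\mid n$ would give $d\mid\gcd(n,m,\Upsilon(m;\alpha))$ and contradict the hypothesized failure of the GCD Criterion. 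To this end I would iterate the cohomological equation to obtain $h(R_\alpha^q y)-h(y)\equiv-\Psi(\alpha;y+\alpha;b;q)\pmod d$, and exploit the closing identity $R_\alpha^m=R_b$, which holds precisely because $b=\{m\alpha\}$. The explicit formulas for $\Psi$ from Section~\ref{sec1} then force $h$ to agree almost everywhere with a piecewise constant function on the $m$ arcs of the partition \eqref{eq2.6}. Once $h$ has this form, the dichotomy $b<\alpha$ versus $b>\alpha$ treated in the proof of Lemma~\ref{lem21} can be run in reverse: consistency of the color pattern of $h$ around the unit torus forces $d\mid m$, while consistency across the singular jump at $b=\{m\alpha\}$ forces $d\mid\Upsilon(m;\alpha)$.

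The main obstacle I anticipate is the regularity step of promoting a merely measurable $h$ to the canonical piecewise constant representative on the partition \eqref{eq2.6}. For a generic gate length $b$, no such piecewise constant primitive of $f$ over $R_\alpha$ exists, because the Birkhoff sums of $f$ oscillate in an unbounded manner. When $b=\{m\alpha\}$, however, the explicit formulas of Section~\ref{sec1} show that these Birkhoff sums deviate from $qb$ by a uniformly bounded amount; this is precisely the analytic input needed to produce a piecewise constant primitive and to pin down every $L^2$-solution of the cohomological equation up to an additive constant. Once this regularity is secured, the arithmetic conclusions $d\mid m$ and $d\mid\Upsilon(m;\alpha)$ emerge as a reverse reading of the combinatorial structure encoded by the Double Periodic Coloring Algorithm, completing the proof of the remaining direction.
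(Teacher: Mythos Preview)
Your plan is sound and takes a genuinely different route from the paper. You recast the discretized flow as a $\Zz/n\Zz$-skew product over $R_\alpha$ and invoke the cohomological criterion: non-ergodicity would yield a divisor $d\mid n$, $d>1$, and a measurable $h:[0,1)\to\Zz/d\Zz$ with $h\circ R_\alpha-h\equiv-f\circ R_\alpha\pmod d$; the bounded-remainder property of $[0,b)$ (which holds exactly because $b=\{m\alpha\}$) then lets you promote $h$ to a step function on the partition \eqref{eq2.6}, after which the combinatorics of Lemma~\ref{lem21} run in reverse to give $d\mid m$ and $d\mid\Upsilon(m;\alpha)$. The paper instead works directly with the interval exchange $T:[0,n)\to[0,n)$: Lemma~\ref{lem41} supposes a non-trivial $T$-invariant set, uses the $3$-distance theorem together with a Lebesgue-density argument (Lemma~\ref{lem43}) and the substantial overlapping of the short intervals $J_k(q)$ to show this set is essentially a union of the long intervals cut by $\{j\alpha\}$, $-1\le j\le m-1$, and then a color-propagation argument across the squares (Figures~4.2--4.6) reconstructs the Double Periodic Coloring and hence the GCD Criterion. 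Your route is more conceptual and makes transparent why the hypothesis $b=\{m\alpha\}$ is exactly what is needed; the paper's route is more self-contained and, crucially, its $3$-distance/overlap machinery is what gets reused in Section~\ref{sec5} for the general polysquare-$b$-rational surfaces of Theorem~\ref{thm32}, where no finite-group skew-product structure is available. One step you pass over too quickly: the sentence ``the question reduces to ergodicity'' elides the upgrade from ergodicity to equidistribution of \emph{every} non-pathological orbit. For a finite-group extension of a uniquely ergodic base this upgrade is standard (any invariant measure projects to Lebesgue on the circle, and ergodicity forces the fibre densities to be constant), but since the cocycle $f=\mathbf 1_{[0,b)}$ is discontinuous you still owe the reader an argument of the type the paper carries out in Section~\ref{sec6}.
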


An interesting consequence of Theorem~\ref{thm21} is the following.
If $b=\{m\alpha\}$, where $\alpha$ is irrational and $m$ is a positive integer, and an
$\alpha$-geodesic on the $n$-square-$b$ surface is dense on the surface, then the geodesic exhibits the stronger property of equidistribution.

We shall prove the remainder of Theorem~\ref{thm21} later; see Section~\ref{sec4} and the end of Section~\ref{sec6}.

%
%

\section{More on the $2$-square-$b$ surface and beyond}\label{sec3}

We now consider the general case of the $n$-square-$b$ surface when $b\ne\{m\alpha\}$ for any $m\in\Zz$.
Here the answer is rather tricky.

For the original case $n=2$, the paper of Veech~\cite{V1} contains a study of the following special case of Question~1 where the test sets are simply the two squares of the $2$-square-$b$ surface.

\begin{q2}
Let $\alpha$ be an irrational number.
When can we guarantee that every half-infinite $\alpha$-geodesic on the $2$-square-$b$ surface is evenly distributed between the two constituent squares?
\end{q2}

In other words, assuming that a particle moves along the $\alpha$-geodesic with unit speed, under what condition can we guarantee that for every starting point, the left square is visited \textit{half the time}?
More precisely, we want the asymptotic visit-density of this particle to the left square of the $2$-square-$b$ surface to exist, and to be equal to~$1/2$.

Veech~\cite{V1} has the following positive answer to Question~2.

\begin{theorema}
Suppose that the slope $\alpha$ is badly approximable.
Suppose further that the gate-size $b\ne\{m\alpha\}$ for any $m\in\Zz$.
Then every half-infinite $\alpha$-geodesic on the $2$-square-$b$ surface is evenly distributed between the two constituent squares.
\end{theorema}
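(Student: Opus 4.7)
The plan is to convert the statement into a question about Birkhoff averages of a $\Zz/2$-valued cocycle over the irrational rotation by~$\alpha$, and then use both hypotheses --- bad approximability of $\alpha$ and $b\ne\{m\alpha\}$ for every $m\in\Zz$ --- to obtain the required uniform equidistribution from \emph{every} starting point.

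First, I would make the discretisation explicit. A half-infinite $\alpha$-geodesic starting at $(0,\tau)$ on the far-left edge of the $2$-square-$b$ surface hits the vertical edges of the surface at heights $s_q=\{\tau+q\alpha\}$, $q=1,2,3,\ldots$, with consecutive hits separated by the same arc-length; the segment between two consecutive hits lies in the left square or in the right square according to the parity of $\Psi(\alpha;\tau;b;q)$ as introduced in Section~\ref{sec1}. Thus the conclusion reduces to showing that for every $\tau$,
\begin{equation*}
S_N(\tau)=\frac{1}{N}\sum_{q=0}^{N-1}(-1)^{\Psi(\alpha;\tau;b;q)}\longrightarrow 0\qquad\text{as }N\to\infty.
\end{equation*}

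Second, I would recast $S_N(\tau)$ as a Birkhoff average for a skew product. Setting $\psi(x)=1-2\cdot 1_{[0,b)}(x)\in\{\pm 1\}$ and $T\colon\Tt\times\{\pm 1\}\to\Tt\times\{\pm 1\}$ by $T(x,s)=(x+\alpha,s\psi(x))$, the quantity $(-1)^{\Psi(\alpha;\tau;b;q)}$ is, up to a trivial index shift, the fibre coordinate of $T^q(\tau,1)$, so $S_N(\tau)$ is the orbit average of the non-trivial character $(x,s)\mapsto s$. The desired conclusion becomes unique ergodicity of $T$ with respect to normalised Haar measure on $\Tt\times\{\pm 1\}$, tested against this character from every starting point.

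Third, I would prove ergodicity of $T$ using the assumption $b\ne\{m\alpha\}$. Ergodicity of $T$ fails precisely when $\psi$ is a multiplicative $\{\pm 1\}$-coboundary over the rotation, that is, when there exists a measurable $h\colon\Tt\to\{\pm 1\}$ with $\psi(x)=h(x)h(x+\alpha)$; writing $h=1-2\cdot 1_A$ turns this into the set identity $A\triangle(A-\alpha)=[0,b)$ modulo null sets. A Fourier-coefficient calculation on this identity forces $b$ to lie in the countable subgroup $\Zz+\Zz\alpha$ of $\Rr$, contradicting the hypothesis. Hence $T$ is ergodic, and $S_N(\tau)\to 0$ for almost every~$\tau$.

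Finally, I would upgrade ergodicity to unique ergodicity using the Denjoy--Koksma inequality: along the continued-fraction denominators $q_n$ of $\alpha$,
\begin{equation*}
\Bigl|\,\sum_{j=0}^{q_n-1}1_{[0,b)}(x+j\alpha)-q_nb\,\Bigr|\le\mathrm{Var}(1_{[0,b)})=2
\end{equation*}
uniformly in~$x$, so $(-1)^{\Psi(\alpha;\tau;b;q_n)}$ is essentially $\tau$-independent. Bad approximability then yields $q_{n+1}\le Cq_n$ with bounded Ostrowski-digit sums, and a block-decomposition of $[0,N)$ along the scales $q_n$ converts the preceding mean-zero fact into a uniform-in-$\tau$ quantitative bound on $|S_N(\tau)|$. \emph{The main obstacle} is exactly this last step: ergodicity alone only yields $S_N(\tau)\to 0$ for almost every~$\tau$, and the genuine Diophantine input of bad approximability is what rules out an exceptional set of initial points whose $\alpha$-geodesic could otherwise violate the visit-density asymptotic between the two squares.
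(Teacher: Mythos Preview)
Your skew-product formulation is natural and is close in spirit to Veech's original route, but it differs substantially from what this paper does, and two of your steps have genuine gaps.

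\textbf{Comparison with the paper.} The paper does not prove Theorem~A in isolation; it deduces it from the stronger Theorem~\ref{thm31} (uniform distribution, not merely equal square-visit densities), which is in turn a special case of Theorem~\ref{thm32}. The method there is not a $\Zz/2$ skew product at all: the $\alpha$-flow is discretised to an \emph{interval exchange transformation} $T:[0,s)\to[0,s)$, ergodicity of $T$ is obtained in Lemma~\ref{lem51} via the $3$-distance theorem together with a continued-fraction separation lemma (Lemma~\ref{lem52}) that uses bad approximability in an essential way, and the upgrade to unique ergodicity (Lemma~\ref{lem61}) is a Furstenberg-type argument passing through weak-$*$ limits of empirical measures, the Krein--Milman theorem, and Ostrowski block decomposition. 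So your framework and the paper's share the ``ergodic $\Rightarrow$ uniquely ergodic'' two-stage architecture, but the implementations of both stages are quite different.

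\textbf{The gap in Step~3.} The sentence ``A Fourier-coefficient calculation on this identity forces $b$ to lie in the countable subgroup $\Zz+\Zz\alpha$'' is not justified. The equation $A\triangle(A-\alpha)=[0,b)$ is, at the level of indicator functions, $1_A(x)+1_A(x+\alpha)-2\,1_A(x)1_A(x+\alpha)=1_{[0,b)}(x)$; the quadratic term makes this a \emph{convolution} identity on the Fourier side, not a linear one, so no small-divisor argument drops out. Equivalently, your multiplicative form $h(x+\alpha)=\psi(x)h(x)$ with $h,\psi\in\{\pm1\}$ yields $\widehat{h}(n)e^{2\pi in\alpha}=(\widehat{\psi}*\widehat{h})(n)$, which is intractable coefficient by coefficient. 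The conclusion you want (that $1_{[0,b)}$ is a $\Zz/2$-coboundary only when $b\in\Zz+\Zz\alpha$) is true, but it is a nontrivial result of Veech and is proved by continued-fraction/essential-value methods, not by Fourier analysis.

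\textbf{The gap in Step~4.} You correctly flag this as the main obstacle, but the Denjoy--Koksma sketch as written does not close it. From
\[
\Bigl|\sum_{j=0}^{q_n-1}1_{[0,b)}(x+j\alpha)-q_nb\Bigr|\le2
\]
you can only conclude that the $q_n$-partial sum takes integer values in a window of width~$4$ around $q_nb$; that window contains integers of both parities, so $(-1)^{\Psi(\alpha;\tau;b;q_n)}$ is \emph{not} ``essentially $\tau$-independent''. Bad approximability bounds the Ostrowski digits, but it does not by itself pin down the parity along a single $q_n$-block uniformly in~$\tau$. The paper's upgrade (Section~\ref{sec6}) sidesteps this entirely: it assumes a failure of unique ergodicity, extracts a second ergodic invariant measure $\nu\ne\lambda/s$ via weak-$*$ compactness and Krein--Milman, and derives a contradiction from the fact that empirical measures along orbits of length $q_{d_m}$ are \emph{uniformly not crowded} (the bound \eqref{eq6.27}), so no limit measure can be arbitrarily denser than Lebesgue on short intervals. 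That argument uses only that the base dynamics is the rotation by~$\alpha$ and that block lengths are convergent denominators; bad approximability enters earlier, in the ergodicity step, through Lemma~\ref{lem52}.
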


We recall that \textit{badly approximable} numbers are characterized by the property that the continued fraction digits have a common upper bound.
A well-known subclass of badly approximable numbers is the set of all quadratic irrationals, \textit{i.e.}, real algebraic numbers of degree~$2$, which are characterized by the property that the continued fraction expansions are eventually periodic.

Given a badly approximable slope~$\alpha$, the condition $b\ne\{m\alpha\}$ for any $m\in\Zz$ in Theorem~A excludes a countable set of values of~$b$.
For these excluded values of~$b$, we now have a complete understanding of the situation.
As explained in the Remarks after the proof of Lemma~\ref{lem21}, what happens depends on the Double Even Criterion.
Suppose that the Double Even Criterion fails.
Then it follows as a consequence of Theorem~\ref{thm21} that any half-infinite $\alpha$-geodesic is evenly distributed between the two constituent squares. 
On the other hand, suppose that the Double Even Criterion holds.
Then each constituent square has a well-defined visit-density, depending on the starting point of the $\alpha$-geodesic, which is \textit{never} equal to~$1/2$, so the half-infinite $\alpha$-geodesic is never evenly distributed between the two constituent squares.

If the slope $\alpha$ is not badly approximable, then Veech~\cite{V1} has the following very interesting negative result.

\begin{theoremb}
Suppose that the irrational slope $\alpha$ is not badly approximable.
Then there exists an explicit  construction of an uncountable set of values $b$ with strong violation of uniformity in the sense that for some half-infinite
$\alpha$-geodesics on such a $2$-square-$b$ surface, the visit-densities of the constituent squares do not even exist.
\end{theoremb}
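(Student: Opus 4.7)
The plan is to reduce the question to a parity equidistribution problem for the sequence $\Psi(\alpha;\tau;b;q)\bmod 2$, and then to exploit the existence of arbitrarily large partial quotients in the continued fraction expansion of $\alpha$ to construct, via a Cantor-like procedure, an uncountable family of gate-sizes $b$ for which the Birkhoff averages oscillate between two distinct values.

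By the vertical-edge discretization of Section \ref{sec1}, a particle moving along an $\alpha$-geodesic lies in the left constituent square at the $q$-th vertical-edge crossing precisely when $\Psi(\alpha;\tau;b;q)$ has a prescribed parity. Since the time spent between successive crossings is bounded above and below by absolute constants, the continuous visit-density of the left square at scale $N$ agrees, up to lower-order error, with
\begin{equation*}
\rho_N = \frac{1}{2} + \frac{1}{2N}\sum_{q=0}^{N-1}(-1)^{\Psi(\alpha;\tau;b;q)}.
\end{equation*}
Non-existence of the visit-density is therefore equivalent to $\liminf_{N\to\infty}\rho_N < \limsup_{N\to\infty}\rho_N$.

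Let $p_n/q_n$ denote the convergents of $\alpha$. Since $\alpha$ is not badly approximable, we may fix a subsequence $n_1 < n_2 < \ldots$ along which the partial quotients $a_{n_k+1}$ grow arbitrarily fast. The estimate $|q_{n_k}\alpha - p_{n_k}| < 1/(a_{n_k+1}q_{n_k})$ together with the three-distance theorem imply that, over a block $0\le q < q_{n_k+1}$, the sequence $\{\tau+q\alpha\}$ is a small perturbation of a periodic sequence of period $q_{n_k}$. Consequently, moving $b$ by an integer multiple of $\{q_{n_k}\alpha\}$ alters $\Psi(\alpha;\tau;b;q)$ at scale $q_{n_k+1}$ in a completely transparent way, controlled by a single Ostrowski digit $c_{n_k} \in \{0,1,\ldots,a_{n_k+1}-1\}$. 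For $c_{n_k}$ lying in one of two disjoint intervals of length proportional to $a_{n_k+1}$, the explicit parity formulas of Section \ref{sec2} (applied to the approximation $b^{(n_k)} = \{m\alpha\}$ at level $n_k$) show that the resulting $\rho_{q_{n_k+1}}$ can be driven either above $\tfrac{1}{2}+\delta$ or below $\tfrac{1}{2}-\delta$, for some fixed $\delta>0$ independent of $k$. Alternating these two choices across $k$ produces a binary Cantor tree of admissible digit sequences, each branch of which defines a distinct value of $b$; this yields an uncountable family.

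For each $b$ in the family, $\rho_{N_k}$ with $N_k=q_{n_k+1}$ lies alternately above $\tfrac{1}{2}+\delta$ and below $\tfrac{1}{2}-\delta$, forcing the visit-density not to exist. The main obstacle is a quantitative control on the interaction between scales: one must verify that the digits $c_n$ for $n>n_k$ do not wash out the parity bias produced at level $n_k$, that is,
\begin{equation*}
\bigl|\Psi(\alpha;\tau;b;q) - \Psi(\alpha;\tau;b^{(n_k)};q)\bigr| = o(q_{n_k+1}) \quad \text{uniformly in } q\le q_{n_k+1}.
\end{equation*}
This telescoping parity discrepancy estimate holds provided the subsequence $(n_k)$ is chosen sufficiently sparsely, say with $a_{n_{k+1}+1}$ dwarfing any preassigned function of $(a_{n_1+1},\ldots,a_{n_k+1})$; this sparsity can always be achieved because $\alpha$ is assumed not badly approximable. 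Once this error bound is in place, the oscillation conclusion is immediate, and the binary Cantor construction delivers the required uncountable set of $b$.
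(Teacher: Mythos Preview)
The paper does not contain a proof of Theorem~B; it is quoted as a result of Veech~\cite{V1} and stated without proof. There is therefore no proof in the paper to compare your attempt against directly. The paper's own contribution in this direction is the quantitative Theorem~\ref{thm34}, proved in Sections~\ref{sec8}--\ref{sec9} via the parity formula of Lemma~\ref{lem81}; that argument constructs \emph{specific} gate-sizes $\beta_0,\beta_1$, not an uncountable family, and does not carry out a Cantor construction.

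Your outline has the right architecture for a Veech-type result: reduce to the parity of $\Psi$, exploit arbitrarily large partial quotients, and build a Cantor tree of Ostrowski digit choices. But as written it has real gaps. You invoke ``the explicit parity formulas of Section~\ref{sec2}'' to justify the central claim that a choice of the digit $c_{n_k}$ forces $\rho_{q_{n_k+1}}$ above $\tfrac12+\delta$ or below $\tfrac12-\delta$; Section~\ref{sec2} contains no such formulas. The relevant parity formula in this paper is Lemma~\ref{lem81}, and it carries restrictive hypotheses (all $c_i$ even, $c_i\ne 0$ for even~$i$, and the condition~\eqref{eq8.25} on the $b_i$) that your free binary digit choices do not obviously meet. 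The step you are glossing over --- that a single Ostrowski digit at level $n_k$ controls the \emph{sign} of the bias at scale $q_{n_k+1}$ by a \emph{uniform} margin $\delta>0$ --- is the entire substance of the theorem, and you have asserted it rather than proved it. The same applies to the ``telescoping parity discrepancy estimate'': you state the bound you need and say it holds under sufficient sparsity, but give no mechanism for why later digits cannot flip earlier parities on a set of density bounded away from zero. So the proposal is a plausible plan of attack, not yet a proof.
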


So far, we have considered a fixed irrational slope $\alpha$ and asked the question of what values of $b$ lead to half-infinite $\alpha$-geodesics on the
$2$-square-$b$-surface that are evenly distributed between the two constituent squares.

Suppose instead that we consider a fixed gate size~$b$.
Then it is reasonable to ask what irrational slopes $\alpha$ give rise to half-infinite $\alpha$-geodesics on the $2$-square-$b$ surface that are evenly distributed between the two constituent squares.

Veech~\cite{V1} has the following result which shows that $2$-square-$b$ surfaces with rational values of $b$ are exceptional.

\begin{theoremc}
Suppose that the number $b$ is rational.
Then for any irrational slope~$\alpha$, every half-infinite $\alpha$-geodesic on the $2$-square-$b$ surface gives rise to equal visit-densities of the two constituent squares.
\end{theoremc}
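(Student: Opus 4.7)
The plan is to recognize that when $b$ is rational the $2$-square-$b$ surface is (up to rescaling) a flat finite polysquare translation surface, so that the Gutkin--Veech uniform-periodic dichotomy recalled in the abstract applies directly. Write $b = p/q$ in lowest terms, with positive integers $p$ and $q$. The first step is to subdivide each of the two unit squares of the $2$-square-$b$ surface into a $q \times q$ grid of small squares of side length $1/q$. The middle barrier of length $(q-p)/q$, the middle $b$-gate of length $p/q$, and the gate on the far right/left vertical edge all align with the $1/q$ grid, and hence so do all of the edge identifications. After rescaling every length by the factor $q$, the resulting surface $\mathcal{S}$ is a polysquare translation surface consisting of $2q^2$ unit squares whose edge identifications are realized by integer translations.

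Next I would invoke the Gutkin--Veech theorem: on the polysquare surface $\mathcal{S}$, the $1$-direction geodesic flow in every direction is either periodic or uniform. The key observation is that the periodic directions on a polysquare surface are precisely the rational ones, since a closed geodesic on such a surface must return to its starting point after integer horizontal and vertical displacement, forcing its slope to be rational. Since $\alpha$ is irrational, its direction is therefore a uniform direction, and so every half-infinite $\alpha$-geodesic on $\mathcal{S}$ is uniformly distributed with respect to the area measure. Rescaling back to the original $2$-square-$b$ surface, every half-infinite $\alpha$-geodesic on the $2$-square-$b$ surface is likewise equidistributed.

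Finally, to extract equal visit-densities of the two constituent squares, I would use the fact that each constituent unit square has area $1$ while the total area of the surface is $2$. Equidistribution with respect to the area measure forces the visit-density on each constituent square to equal $1/2$, giving the desired conclusion of Theorem~C. In fact this proves a stronger statement: every half-infinite $\alpha$-geodesic is equidistributed on the whole surface, not merely equidistributed between the two constituent squares.

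The main obstacle I anticipate is the verification in the first step that \emph{all} edge identifications of the $2$-square-$b$ surface --- including the identifications of the two sides of the middle barrier, of the middle $b$-gate, and of the far right and far left vertical edges --- are compatible with the $1/q$ grid and are realized by translations by integer multiples of $1/q$, so that the rescaled surface is genuinely a flat finite polysquare translation surface in the precise sense required by the Gutkin--Veech theorem. Once this compatibility is secured, the remainder of the argument is routine, and in particular no Diophantine hypothesis on $\alpha$ (such as bad approximability in Theorem~A) is needed: rationality of $b$ is enough on its own to reduce the problem to an integrable polysquare dynamics.
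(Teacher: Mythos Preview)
Your argument is correct. The paper does not supply its own proof of Theorem~C; it is quoted from Veech~\cite{V1} as background, so there is no in-paper proof to compare against. Your route---observing that for rational $b=p/q$ the $2$-square-$b$ surface is, after a $1/q$-subdivision and rescaling, a genuine flat finite polysquare translation surface, and then invoking Theorem~E---is the natural modern deduction and yields the stronger conclusion of full equidistribution, not merely equal visit-densities. The edge-identification check you flag is routine: the pairings $v_1\leftrightarrow v_1$, $v_2\leftrightarrow v_2$, $v_3\leftrightarrow v_3$ are horizontal translations by $2$, $1$, $1$ respectively, the horizontal pairings $h_1\leftrightarrow h_1$, $h_2\leftrightarrow h_2$ are vertical translations by~$1$, and the barrier endpoints $0$, $b=p/q$, $1$ all lie on the $1/q$-grid, so after rescaling by~$q$ every identification is an integer translation.

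One remark worth making: your proof is anachronistic. Veech established Theorem~C in 1969, well before the Gutkin--Veech dichotomy (Theorem~E) was available, so his original argument in~\cite{V1} necessarily proceeded differently, via a direct analysis of the mod~$2$ skew product over the irrational rotation. Given the tools the present paper assembles, however, your deduction is entirely legitimate and arguably the cleanest way to see why rational~$b$ is special.
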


We also have the following negative result of Masur and Smillie on the $2$-square-$b$ surface; see \cite[Theorem~3.2]{MT} or \cite[Theorem~2]{Mas}.

\begin{theoremd}
Suppose that the number $b$ is irrational.
Then there exist uncountably many slopes $\alpha$ such that for almost every starting point, a half-infinite $\alpha$-geodesic on the 2-square-$b$ surface is not uniformly distributed.
\end{theoremd}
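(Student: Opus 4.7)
The plan is to invoke the translation-surface structure of the 2-square-$b$ surface and adapt the standard Masur--Smillie Cantor-set construction of non-uniquely-ergodic directions on non-lattice translation surfaces. The first step is to verify that for irrational $b$, the 2-square-$b$ surface is not a Veech (lattice) surface. This is most easily seen through the 4-copy unfolding in Figure~1.6. The horizontal direction on this genus-$2$ translation surface decomposes into a single cylinder of circumference~$2$ and thickness~$b$, corresponding to the gate strip (of modulus~$b/2$), together with two cylinders of circumference~$1$ and thickness~$1-b$, corresponding to the two strips trapped between barriers (each of modulus~$1-b$). When $b$ is irrational, the ratio of moduli $(b/2)/(1-b)$ is itself irrational, so one has a fully periodic direction with rationally incommensurable cylinder moduli, a standard obstruction to the Veech (lattice) property and to the uniform-periodic dichotomy of Gutkin--Veech.

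Having established the non-lattice property, I would implement the Masur--Smillie iteration. Starting from the horizontal direction $\theta_0$ with its cylinder decomposition $C_0^{(1)},\ldots,C_0^{(r)}$, construct by induction a sequence of rational directions $\theta_n$ as follows. At stage $n$, given $\theta_n$ and its cylinder decomposition, select a distinguished subcollection of cylinders and apply a Dehn twist of large integer multiplicity $k_n$ supported on that subcollection. Re-expressed in the original translation coordinates, this composition of partial Dehn twists shears the surface, and the new short closed-geodesic direction $\theta_{n+1}$ is a small perturbation of $\theta_n$ whose magnitude decreases with $k_n$. At each stage one chooses between two distinct admissible values of $k_n$; this gives a binary choice tree whose $2^\Nn$ distinct branches yield distinct sequences of directions, each converging to some limit $\theta_\infty=\lim\theta_n$. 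Standard convergence estimates show these limits exist, are irrational, and depend injectively on the branch, producing uncountably many candidate slopes $\alpha=\theta_\infty$.

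The final step is to verify that each such $\theta_\infty$ yields non-uniquely ergodic flow and then deduce the almost-every-starting-point statement. The Dehn twists are arranged so that a chosen subcollection of cylinders at every stage retains a definite positive fraction of the total area and is approximately flow-invariant in direction $\theta_{n+1}$ with increasing accuracy. Taking an appropriate intersection of these nearly-invariant sets and applying a diagonal argument yields two disjoint flow-invariant Borel sets $A_1,A_2$ of positive Lebesgue measure, on which Lebesgue measure restricts to mutually singular invariant ergodic components. The ergodic decomposition then shows that for Lebesgue-almost every starting point $p$, the orbit equidistributes with respect to the ergodic measure on the component containing $p$, which is strictly not the normalized Lebesgue measure on the full surface; hence the half-infinite $\theta_\infty$-geodesic is not uniformly distributed. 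The principal obstacle throughout is the quantitative bookkeeping: one must select the multiplicities $k_n$ large enough that distinct branches of the choice tree produce genuinely distinct limits and that the invariant-set areas remain bounded below, yet controlled enough that the sequence $\theta_n$ converges. The irrationality of~$b$ enters precisely here, supplying rationally incommensurable cylinder moduli at every stage of the iteration and so preventing the construction from degenerating into the trivial partition that would arise in the Veech (rational-$b$) setting of Theorem~C.
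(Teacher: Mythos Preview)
The paper does not give its own proof of Theorem~D; it is quoted as a result of Masur and Smillie with references to \cite[Theorem~3.2]{MT} and \cite[Theorem~2]{Mas}, and your outline is precisely a sketch of that cited construction. So in spirit you are following exactly the route the paper points to.

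One small correction: the cylinder decomposition you describe (one horizontal cylinder of circumference~$2$ and height~$b$, two of circumference~$1$ and height~$1-b$) is the decomposition of the $2$-square-$b$ surface of Figure~1.3 itself, not of the $4$-copy surface of Figure~1.6. Since the $2$-square-$b$ surface is already a genus-$2$ translation surface, there is no need to pass to the $4$-copy billiard unfolding at all; you can run the Masur--Smillie argument directly on it. The moduli ratio $(b/2)/(1-b)$ is indeed irrational when $b$ is, so the key input for the iterated Dehn-twist construction is in place. Beyond that, your sketch is accurate but remains a sketch: the quantitative control on the twist multiplicities $k_n$ (ensuring convergence of $\theta_n$, injectivity of the branch-to-limit map, and a uniform positive lower bound on the area of the nearly-invariant sets) is where all the work lies, and you would need to import those estimates from the cited sources rather than redo them.
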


Note that the uncountable set of \textit{bad} slopes $\alpha$ in Theorem~D can be extended to a set of positive Hausdorff measure, but not to a set of positive Lebesgue measure.
This follows from a well known general result of Kerckhoff, Masur, and Smillie~\cite{KMS} concerning geodesic flow on any rational polygonal surface.
This important general theorem, which works for almost every slope, unfortunately does not say anything about any explicit slope, which is our main interest.
For more about non-integrable flat dynamical systems, the reader is referred to the survey papers \cite{MT} and~\cite{Z}. 

Theorems A--D are very satisfactory results that give us a very good understanding of the distribution of half-infinite $\alpha$-geodesics on the
$2$-square-$b$ surface.
We can view this as the mod~$2$ case.
However, the corresponding mod~$n$ version, concerning the $n$-square-$b$ surface, remains open for any integer $n\ge3$.

Veech~\cite{V1} has asked the question of whether or not his method can be extended to prove the mod~$n$ versions of Theorem~A for $n\ge3$.
Here we can establish such a result, but we do not use Veech's method which is quite complicated. 
In fact, we can prove the following stronger result that answers the mod~$n$ analog of Question~1.

\begin{thm}\label{thm31}
Suppose that $n\ge2$ and the slope $\alpha$ is badly approximable.
Suppose also that the gate-size $b\ne\{m\alpha\}$ for any $m\in\Zz$.
Then every half-infinite $\alpha$-geodesic on the $n$-square-$b$ surface is uniformly distributed.
\end{thm}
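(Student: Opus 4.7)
The plan is to discretize the $\alpha$-flow into a $\Zz/n\Zz$-valued skew product over an irrational circle rotation and then establish unique ergodicity of this skew product via a character-theoretic criterion. First I would take as Poincar\'e section the leftmost vertical edge $v_0$, supplementing the height coordinate $y \in [0, 1)$ with the square index $\sigma \in \Zz/n\Zz$ recording which of the $n$ squares the geodesic currently occupies. For $0 < \alpha < 1$, the geodesic successively hits the vertical edges $v_1, \ldots, v_n = v_0$ at heights $\{y + j\alpha\}$ for $j = 1, \ldots, n$, each crossing being either a gate crossing (height in $[0, b)$, advancing $\sigma$ by $1$ modulo $n$) or a barrier crossing ($\sigma$ preserved). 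Hence the first-return map to $v_0$ is the skew product
\[
T(y, \sigma) = (y + n\alpha,\; \sigma + \phi(y)),
\qquad
\phi(y) = \#\{1 \le j \le n : \{y + j\alpha\} < b\} \pmod{n},
\]
and equidistribution of the half-infinite $\alpha$-geodesic on the $n$-square-$b$ surface is equivalent to unique ergodicity of $T$ with respect to the natural Lebesgue measure.

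Next, by the standard character-theoretic criterion for skew products with compact abelian fibre over a uniquely ergodic base, $T$ is uniquely ergodic if and only if for each non-trivial character $\chi_m: \Zz/n\Zz \to \mathbb{C}^*$ with $1 \le m \le n-1$ the twisted Birkhoff averages $\frac{1}{N}\sum_{k=0}^{N-1} e^{2\pi i m N_{nk}(y)/n}$ tend to $0$ uniformly in $y$, where $N_K(y) = \#\{1 \le \ell \le K : \{y + \ell\alpha\} < b\}$. Writing $N_{nk}(y) = nkb + E_k(y)$ splits the summand as $e^{2\pi i m k b} \cdot e^{2\pi i m E_k(y)/n}$. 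When $m b \notin \Zz$ the geometric factor has bounded partial sums, and Abel summation against the Denjoy--Koksma discrepancy bound $|E_k(y)| = O(\log k)$ afforded by the badly approximable hypothesis on $\alpha$ yields the desired $o(N)$ decay. When $m b \in \Zz$, which may happen for certain rational values of $b$, the geometric factor is identically $1$ and the decay must be extracted entirely from the error term; here the hypothesis $b \ne \{m\alpha\}$ for all $m \in \Zz$ enters decisively as the Keane condition asserting that the discontinuities $0$ and $b$ of $\mathbf{1}_{[0,b)}$ lie in distinct $R_\alpha$-orbits, preventing $\phi$ from being a $\Zz/n\Zz$-valued coboundary.

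The main obstacle is the $m b \in \Zz$ case. Since $E_k(y)$ can jump by $\Theta(1)$ on a single step, naive bounded-variation arguments against the constant geometric factor produce only a trivial $O(N)$ bound. To extract genuine $o(N)$ decay one must renormalize through the continued-fraction algorithm of $\alpha$: the bounded partial quotients yield a Rokhlin--Kakutani tower of controlled height on which $\phi$ is essentially constant at each level, and the orbit-separation of $b$ from the $\alpha$-orbit of $0$ prevents these level-constants from assembling into a measurable coboundary. Synthesizing these two Diophantine inputs in the spirit of the Veech and Conze--Keane theory of step-function skew products over badly approximable rotations completes the proof of unique ergodicity of $T$, and hence of the uniform distribution claim on the $n$-square-$b$ surface.
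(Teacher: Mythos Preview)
Your reduction to a $\Zz/n\Zz$-skew product over the rotation and the character-theoretic criterion form a natural framework, close in spirit to Veech's original $n=2$ argument (minor quibble: your $T$ is actually the $n$-th iterate of the one-step return map rather than the first return, but since unique ergodicity of $T_1^n$ implies that of $T_1$ this is harmless). The real gap is in what you call the easy case $mb\notin\Zz$. Abel summation against the bounded partial sums of $\sum_k e^{2\pi i mkb}$ requires control of the total variation $\sum_{k<N}|c_{k+1}-c_k|$ of the error factor $c_k=e^{2\pi i mE_k(y)/n}$. But $E_{k+1}(y)-E_k(y)=\#\{nk<\ell\le n(k+1):\{y+\ell\alpha\}<b\}-nb$ is typically $\Theta(1)$ at each step, so this total variation is $\Theta(N)$ and Abel summation recovers only the trivial $O(N)$ bound. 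The Denjoy--Koksma estimate $|E_k|=O(\log k)$ controls the \emph{size} of $E_k$, not its variation, and is of no direct help here. Indeed for $n=2$ your ``easy'' case is the entire content of Veech's Theorem~A, whose proof is genuinely delicate; you have not supplied the mechanism that makes it work. Your renormalization sketch for the $mb\in\Zz$ case likewise remains an outline rather than an argument.

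By contrast, the paper avoids twisted Birkhoff sums altogether. It proves ergodicity of the interval exchange $T:[0,n)\to[0,n)$ directly by a density argument (Lemma~\ref{lem51}): the $2$-distance structure of the partition $\AAA_k(\alpha)$ by the points $\{q\alpha\}$, $-1\le q\le q_{k+1}-2$, furnishes substantially overlapping short intervals on which a hypothetical nontrivial invariant set $S_0$ must be almost full or almost empty, and $T$-invariance propagates this globally to force $S_0$ to be (up to measure zero) a union of the finitely many long intervals cut out by the singularities, which is then ruled out combinatorially. The two hypotheses enter through a separation lemma (Lemma~\ref{lem52}): the condition $b\ne\{m\alpha\}$ guarantees that the neighbours of the extra singularity $\{b-\alpha\}$ in $\AAA_k(\alpha)$ have indices tending to infinity, and the badly-approximable condition guarantees that for infinitely many $k$ those indices are well separated from $0$, from $q_{k+1}$, and from one another. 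Unique ergodicity is then obtained in Section~\ref{sec6} by a Furstenberg-style weak-star compactness argument, showing that any second invariant probability measure would have to be arbitrarily denser than Lebesgue on some subinterval, contradicting the near-equipartition of orbit segments of length $q_k$.
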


Furthermore, we can establish a far-reaching generalization of Theorem~\ref{thm31}.
We consider the larger class of \textit{flat finite polysquare}, \textit{or square tiled\/}, \textit{translation surfaces with $b$-rational gates}.

A \textit{finite polysquare}, \textit{or square tiled\/}, \textit{region} is a connected, but not necessarily simply-connected, polygon $P$ on the plane which is tiled with unit squares, assumed to be closed, that we call the \textit{atomic squares} of~$P$, and which satisfies the following conditions:

(i) Any two atomic squares in $P$ either are disjoint, or intersect at a single point, or have a common edge.

(ii) Any two atomic squares in $P$ are joined by a chain of atomic squares where any two neighbors in the chain have a common edge.

To turn a given finite polysquare region $P$ into a \textit{flat finite polysquare translation surface}~$\PPP$, we need identification of pairs of horizontal edges as well as identification of pairs of vertical edges.
In Figure~3.1, we show examples of the identification of horizontal edges on the two leftmost columns of atomic squares as well as examples of the identification of vertical edges on the two topmost rows of atomic squares.

Note that the finite polysquare surface $\PPP$ may have \textit{holes}, and we also allow \textit{whole barriers} which are horizontal or vertical \textit{walls} that consist of one or more boundary edges of atomic squares.
For example, the finite polysquare surface in Figure~3.1 has $32$ atomic squares, $2$ holes as well as $3$ horizontal walls and $4$ vertical walls.

\begin{displaymath}
\begin{array}{c}
\includegraphics{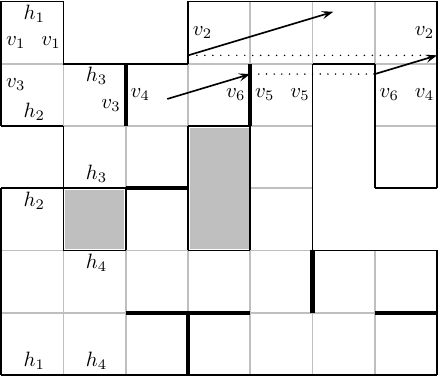}
\\
\mbox{Figure 3.1: a flat finite polysquare translation surface}
\end{array}
\end{displaymath}

Geodesic flow on a flat finite polysquare translation surface is always $1$-direction linear flow.

\begin{remark}
Geodesic flow on a general finite polysquare surface may sometimes be a $4$-direction flow.
Consider, for example, geodesic flow on the cube surface.
It is well known that this $4$-direction geodesic flow on the cube surface can be converted to a $1$-direction geodesic flow by using a $4$-copy construction, where we take $4$ \textit{rotated} copies of the cross-shaped net of the cube surface, and glue together corresponding edges in the different copies to obtain a flat finite polysquare translation surface.
Indeed, an analog of this $4$-copy construction works for any finite polysquare surface with $4$-direction geodesic flow.

Meanwhile, it can also be shown that any $4$-direction billiard orbit in a finite polysquare region is equivalent to $1$-direction geodesic flow in a corresponding flat finite polysquare translation surface.

For a more detailed discussion, see \cite[Section~1]{BC}.

It is therefore sufficient to study $1$-direction geodesic flow on flat finite polysquare translation surfaces.
\end{remark}

The $2$-dimensional continuous Kronecker--Weyl equidistribution theorem for the torus line in a square leads to an interesting \textit{uniform-periodic dichotomy}, in the sense that every torus line with irrational slope is uniformly distributed, whereas every torus line with rational slope is periodic.

We have the following remarkable extension of this classical result by Gutkin and Veech about 70 years later; see \cite{G,V2,V3}.

\begin{theoreme}
On any flat finite polysquare translation surface, every half-infinite $1$-direction geodesic with irrational slope is uniformly distributed, whereas every half-infinite $1$-direction geodesic with rational slope is periodic.
\end{theoreme}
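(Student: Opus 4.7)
My plan is to split the statement by the rationality of the slope, handling periodicity in the rational case by an elementary pigeonhole argument and equidistribution in the irrational case by invoking the Veech dichotomy.

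For the rational case, I would write $\alpha=p/q$ in lowest terms and exploit the fact that any flat finite polysquare translation surface $\PPP$ with $N$ atomic squares is a degree-$N$ branched cover of the standard torus $\Tt^2=[0,1)^2$, with branching concentrated at the finitely many cone points. A $1$-direction geodesic on $\PPP$ that avoids the singular set projects to a geodesic on $\Tt^2$ with slope~$\alpha$, which is closed of length $\sqrt{p^2+q^2}$. After each torus-period the lifted geodesic returns to a point over the same base point, but possibly in a different atomic square. By pigeonhole among the $N$ sheets, within at most $N$ such periods the geodesic revisits the same base point in the same sheet, and since the flow is a rigid translation and invertible, the geodesic then closes up.

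For the irrational case, the plan is to invoke the Veech dichotomy. Every flat finite polysquare translation surface is a square-tiled (origami) surface, so its Veech group is commensurable with $SL(2,\Zz)$ and hence a non-uniform lattice in $SL(2,\Rr)$. This makes $\PPP$ a Veech (lattice) surface, and Veech's dichotomy then asserts that in each direction the $1$-direction geodesic flow is either completely periodic or uniquely ergodic, with the former occurring precisely in the parabolic directions of the Veech group. For any subgroup commensurable with $SL(2,\Zz)$, the parabolic directions are exactly the rational slopes, so any irrational slope falls in a uniquely ergodic direction. Unique ergodicity of a continuous flow on a compact space upgrades almost-everywhere equidistribution to every-orbit equidistribution by the standard Krylov--Bogolyubov / Oxtoby argument, which settles equidistribution for every half-infinite geodesic that escapes the singular set.

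The main obstacle is the irrational case: the Veech dichotomy is a black-box appeal to substantial renormalization theory (affine structures, Teichm\"uller disks, and Masur's criterion for unique ergodicity via non-divergence of the Teichm\"uller geodesic). A more self-contained route would model the first-return map of the flow to a transverse segment as an interval exchange transformation compatible with the polysquare tiling, and then run a continued-fraction (Rauzy--Veech) renormalization scheme to establish unique ergodicity directly. Carrying this out honestly is the essential difficulty, but since the statement appears in the paper only as a classical benchmark against which the new results on $b$-gated surfaces are contrasted, citing \cite{G,V2,V3} suffices and the rational case can be kept elementary as above.
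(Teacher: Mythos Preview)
The paper does not prove Theorem~E at all; it is stated as a classical result of Gutkin and Veech and simply cited via \cite{G,V2,V3}. You correctly recognize this in your final paragraph, so there is nothing in the paper to compare your argument against.

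Your sketch is nonetheless sound. The rational case is handled exactly as you describe, by the branched-cover-of-the-torus structure and pigeonhole among the sheets. For the irrational case, invoking the Veech dichotomy for square-tiled surfaces is correct but, as you yourself note, rather heavy machinery. It is worth knowing that Gutkin's original argument in \cite{G} is considerably more elementary and tailored to the polysquare setting: one uses the skew-product structure of the flow over the irrational torus rotation and shows directly that the associated interval exchange (or equivalently the $\Zz^d$-cocycle recording which atomic square the orbit is in) is uniquely ergodic, without any appeal to Teichm\"uller theory or Masur's criterion. This is closer in spirit to the interval-exchange methods the present paper develops in Sections~\ref{sec4}--\ref{sec6}, and also to the Boshernitzan criterion route via \cite{V2} mentioned in the Remark at the end of Section~\ref{sec6}. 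Either approach is acceptable here since the theorem is only quoted as background.
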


Note that we consider here only half-infinite $1$-direction geodesics, as we need to exclude any geodesic that hits a singularity of the polysquare surface after which there is no well defined unique continuation.

If the gate size $b$ is irrational, then the $2$-square-$b$ surface is not a polysquare surface, so Theorem~E does not apply.
Furthermore, as Theorem~B shows, for any irrational slope which is not badly approximable, there is clearly no uniform-periodic dichotomy.
There is an uncountable set of values $b$ for which even the simplest test sets, namely the two constituent squares of the $2$-square-$b$ surface, violate uniformity.
On the other hand, any half-infinite $1$-direction geodesic with irrational slope on any $2$-square-$b$ surface cannot be periodic.

As in Theorem~A, we study uniformity in the case of badly approximable slopes. 
Theorem~\ref{thm31} is such a result.
Next we formulate a far-reaching generalization of it, to the class of flat finite polysquare translation surfaces with $b$-rational gates.

An example of such a surface is the $(L;b)$-surface, an L-shaped $4$-square surface with three $b$-size gates and one $b/2$-size gate, as shown in Figure~3.2.

\begin{displaymath}
\begin{array}{c}
\includegraphics{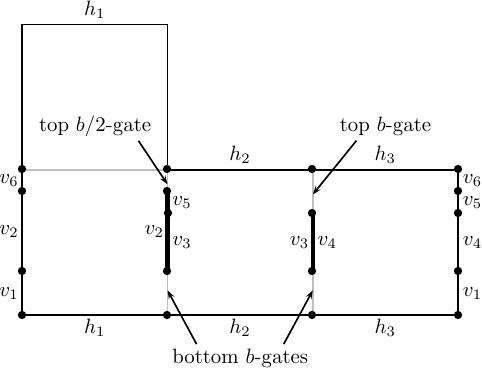}
\\
\mbox{Figure 3.2: the $(L;b)$-surface}
\end{array}
\end{displaymath}

Here the left vertical edge of the bottom middle atomic square has two division points $b$ and $1-b/2=\{-b/2\}$ which determine the left bottom
$b$-gate between $0$ and~$b$, as well as the top $b/2$-gate between $1-b/2$ and~$1$, separated by the fractional vertical barrier between $b$ and $1-b/2$.
On the other hand, the left vertical edge of the bottom right atomic square has two division points $b$ and $1-b=\{-b\}$ which determine the left bottom $b$-gate between $0$ and~$b$, as well as the top $b$-gate between $1-b$ and~$1$, separated by the fractional vertical barrier between $b$
and~$1-b$.

We now extend the class of flat finite polysquare translation surfaces to the larger class of \textit{flat finite polysquare-$b$-rational translation surfaces} by following and then extending the pattern of the $(L;b)$-surface.
For any vertical side of an atomic square, we may place any number of $b$-rational division points located at distance $\{rb\}$ from the bottom of the edge, where $0<b<1$ is fixed and $r$ is a rational number.
These division points, often called the \textit{division numbers}, determine vertical gates separated by fractional vertical barriers, where every gate and barrier is a subinterval of the vertical edge, with endpoints which are $b$-rational division points.
To obtain a translation surface, we identify pairs of horizontal edges and pairs of vertical edges in an appropriate manner.
Then geodesic flow is $1$-direction linear flow.

The flat finite polysquare-$b$-rational translation surface in Figure~3.3 is modified from the flat finite polysquare translation surface in Figure~3.1 in this way.
We have not included the edge identifications.

\begin{displaymath}
\begin{array}{c}
\includegraphics{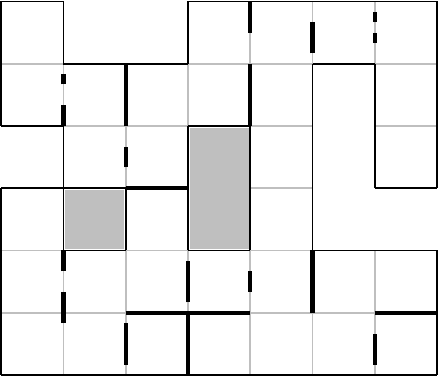}
\\
\mbox{Figure 3.3: a flat finite polysquare-$b$-rational translation surface}
\end{array}
\end{displaymath}

In Sections \ref{sec5}--\ref{sec7}, we shall prove the following generalization of Theorem~\ref{thm31}.

\begin{thm}\label{thm32}
Let $\PPP$ be a flat finite polysquare-$b$-rational translation surface, where $b$ is irrational, and with division numbers $\{r_ib\}$, $i=1,\ldots,R$, where each $r_i$ is rational.
Let $\alpha$ be a badly approximable number such that $\{r_ib\}\ne\{m\alpha\}$ for any $i=1,\ldots,R$ and $m\in\Zz\setminus\{0\}$.
Then every half-infinite $\alpha$-geodesic on $\PPP$ is uniformly distributed.
\end{thm}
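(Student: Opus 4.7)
The plan is to reduce equidistribution of the $\alpha$-geodesic flow on $\PPP$ to unique ergodicity of the Poincar\'e first-return map on the union $V$ of vertical edges of $\PPP$. Once that discrete system is shown to be uniquely ergodic, equidistribution on $\PPP$ will follow: the portion of any orbit between two successive hits on $V$ is a straight segment of slope $\alpha$ crossing a single atomic square, so by the $2$-dimensional Kronecker--Weyl theorem inside each square, combined with the correct frequencies of visits to the vertical edges, we obtain uniform distribution on the whole surface.

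The first step is to analyse the return map $T\colon V\to V$. Since every atomic square of $\PPP$ has unit width and the geodesic has slope $\alpha$, the vertical component of $T$ is simply rotation $y\mapsto y+\alpha\pmod 1$. What changes between successive returns is which vertical edge the point now sits on, and this switching is dictated by the edge identifications of $\PPP$ together with the $b$-rational division numbers $\{r_ib\}$ that subdivide the vertical edges into gates and barriers. The map $T$ is therefore an interval exchange transformation on the finite family of circles $V$ whose discontinuity set consists of the division numbers on each edge together with their preimages under rotation by $\alpha$.

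The main step, and the hardest, is to prove that $T$ is uniquely ergodic, so that every half-infinite orbit is equidistributed on $V$. Here I would combine both hypotheses. The non-resonance assumption $\{r_ib\}\ne\{m\alpha\}$ for all $i$ and all $m\in\Zz\setminus\{0\}$ says that no division number lies on the forward $\alpha$-orbit of the canonical base point $0$. This rules out exactly the coincidence patterns produced by the Double Periodic Coloring Algorithm in Section~\ref{sec2} (compare Figures~2.2--2.5); in its absence, no non-trivial $\alpha$-flow invariant monochromatic decomposition of $\PPP$ can arise. The badly approximable property of $\alpha$, via the three-distance theorem, supplies the quantitative input needed to promote this algebraic non-coincidence into full unique ergodicity: the gaps in the partition generated by $\{q\alpha\pmod 1\}$, $q=0,1,\ldots,N-1$, have uniformly bounded ratios, so the rotation orbit mixes across the partition determined by the $\{r_ib\}$ at a rate that precludes any asymptotic segregation among the atomic squares or any proper $T$-invariant Borel set of intermediate measure.

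Once $T$ is uniquely ergodic, the proportion of return iterations landing on any given vertical edge equals the relative length of that edge, and combined with the Kronecker--Weyl equidistribution of the straight-line piece inside each atomic square, this yields uniform distribution of the $\alpha$-geodesic on the whole of $\PPP$ with respect to Lebesgue measure. The main obstacle will be the unique-ergodicity step: the algebraic non-coincidence $\{r_ib\}\ne\{m\alpha\}$ must be upgraded to a quantitative equidistribution statement for the coded rotation orbit, and this is precisely where the badly approximable hypothesis on $\alpha$ is essential, echoing the role it plays in Theorem~A and Theorem~\ref{thm31}.
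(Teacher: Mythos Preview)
Your high-level plan matches the paper's: reduce to the interval exchange transformation $T$ on the union of vertical edges, prove unique ergodicity of $T$, then lift back to equidistribution of the flow. But the proposal does not contain the two technical ideas that actually make the argument work, and the way you describe the role of the hypotheses is not quite right.

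First, the paper does \emph{not} prove unique ergodicity directly. It first proves \emph{ergodicity} of $T$ with respect to Lebesgue measure (Lemma~\ref{lem51}), by assuming a non-trivial $T$-invariant set $S_0\subset[0,s)$ exists and deriving a contradiction via a $2$-coloring of the vertical edges. The contradiction is not the Double Periodic Coloring obstruction of Section~\ref{sec2}; that algorithm is specific to the $n$-square-$b$ surface. Here the argument is simpler: any non-trivial $2$-coloring must have a color switch at some division point, and iterating $T$ or $T^{-1}$ carries that switch to a point $\{r_{i_0}b+n\alpha\}$ on some edge, which by the non-resonance hypothesis is eventually \emph{not} a division point, giving the contradiction. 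Only afterwards is ergodicity upgraded to unique ergodicity by a separate Furstenberg-style argument (Section~\ref{sec6}), showing that any other $T$-invariant Borel probability measure would have to be arbitrarily denser than Lebesgue on some subinterval, which is impossible because orbits of length $q_{d_m}$ are uniformly not crowded.

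Second, and more seriously, your description of how badly approximable is used --- ``gaps have uniformly bounded ratios, so the rotation orbit mixes across the partition'' --- does not capture the actual mechanism. The real obstacle is that the singularities $\{r_ib-\alpha\}$ of $T$ are \emph{not} of the form $\{q\alpha\}$, so the short special intervals $J_k(q)$ built from the $2$-distance partition $\AAA_k(\alpha)$ do not automatically avoid them. The paper handles this with the separation lemma (Lemma~\ref{lem52}): for infinitely many $k$, the nearest partition points $\{h_k(i;\sigma)\alpha\}$ to each singularity $\{r_ib-\alpha\}$ have indices $h_k(i;\sigma)$ that are pairwise separated by at least $\delta q_{k+1}$ and bounded away from $0$ and $q_{k+1}$. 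This is where badly approximable is genuinely used, via $\Vert n\alpha\Vert>1/((A+2)n)$ (Lemma~\ref{lem71}), and its proof (Section~\ref{sec7}) is a careful case analysis that is not at all suggested by ``bounded gap ratios''. Without this lemma, the $T$-power extension argument (Lemma~\ref{lem54}) that propagates the density information from one short interval $J_k(q^*;s)$ to all others inside the same long critical interval simply fails, because the iterates may cross a singularity. You have correctly located the hard step but not the idea that resolves it.
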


\begin{remark}
The study of geodesic flow on a flat finite polysquare-$b$-rational translation surface is related to a suitable generalization of the Veech $2$-circle problem.
Here the number of circles corresponds to the number of vertical streets of the underlying finite polysquare surface, and the circumference of a circle is the length of the vertical street that corresponds to it.
This remains the case if the division numbers are replaced by a finite set of real numbers, at least one of which is irrational, resulting in surfaces that can be more general than polysquare-$b$-rational translation surfaces.
Unfortunately, we are not able to extend Theorem~\ref{thm32} to this more general setting, as we are not able to establish a suitable generalization of the separation lemma as given by Lemma~\ref{lem52}.
\end{remark}

We can show that billiard in any finite polysquare-$b$-rational region is equivalent to a $1$-direction geodesic flow on a corresponding flat finite
polysquare-$b$-rational translation surface.
This follows from a generalization of the concept of \textit{unfolding}, pioneered by K\"{o}nig and Sz\"{u}cs~\cite{KS} in 1913, to show that billiard in the unit square is equivalent to $1$-direction geodesic flow in the square torus.
Indeed, it can be shown that billiard in any finite polysquare region is equivalent to a $1$-direction geodesic flow on a corresponding flat finite polysquare translation surface; see, for instance, \cite[Section~1]{BC}.

Thus we have immediately the following result concerning billiards.

\begin{thm}\label{thm33}
Let $P$ be a finite polysquare-$b$-rational translation region, where $b$ is irrational, and with division numbers $\{r_ib\}$, $i=1,\ldots,R$, where each $r_i$ is rational.
Let $\alpha$ be a badly approximable number such that $\{r_ib\}\ne\{m\alpha\}$ for any $i=1,\ldots,R$ and $m\in\Zz\setminus\{0\}$.
Then every half-infinite billiard orbit in $P$ with initial slope $\alpha$ is uniformly distributed.
\end{thm}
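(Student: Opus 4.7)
The strategy is to reduce Theorem~\ref{thm33} to Theorem~\ref{thm32} by means of the K\"{o}nig--Sz\"{u}cs unfolding construction, exactly as in the polysquare case sketched in the paragraph preceding the statement and in \cite[Section~1]{BC}. The plan is to show that the unfolding of a finite polysquare-$b$-rational region is itself a flat finite polysquare-$b$-rational translation surface, and that the hypotheses on $\alpha$ and on the division numbers are preserved under the unfolding; then Theorem~\ref{thm32} applies to yield uniform distribution of the $\alpha$-geodesic on the unfolded surface, which projects back to uniform distribution of the original billiard orbit.

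More concretely, I would first define the $4$-copy unfolding $\PPP=\PPP(P)$ by taking one copy of $P$, its reflection across a vertical edge, its reflection across a horizontal edge, and the composite of these two reflections; the four resulting copies are glued along corresponding horizontal and vertical edges in the standard way, producing a translation surface on which the $4$-direction billiard flow with initial slope $\alpha$ lifts to $1$-direction geodesic flow with slope $\alpha$. The key observation is that horizontal reflection sends a division point at height $\{r_ib\}$ to height $1-\{r_ib\}=\{-r_ib\}$, and $-r_i$ is again rational, so the set of division numbers on $\PPP$ is contained in $\{\{rb\}:r\in\Qq\}$, and $\PPP$ is a flat finite polysquare-$b$-rational translation surface in the sense of Section~\ref{sec3}. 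The resulting (finite) list of division numbers $\{r'_jb\}$ on $\PPP$ consists of $\{r_ib\}$ and $\{-r_ib\}$ taken over $i=1,\ldots,R$. The hypothesis $\{r_ib\}\ne\{m\alpha\}$ for all $i$ and all $m\in\Zz$ in Theorem~\ref{thm33} then immediately implies $\{r'_jb\}\ne\{m\alpha\}$ for every division number $r'_j$ on $\PPP$ and every $m\in\Zz\setminus\{0\}$, since $m\mapsto -m$ is a bijection of $\Zz\setminus\{0\}$ (and the case $m=0$ is excluded for free because $b$ is irrational and $r'_j\in\Qq$ implies $\{r'_jb\}\ne 0$ unless $r'_j=0$, which we may discard as trivial).

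Having done this, I would apply Theorem~\ref{thm32} to conclude that every half-infinite $\alpha$-geodesic on $\PPP$ is uniformly distributed. Finally, I would push this equidistribution back down to the billiard region $P$ via the natural $4$-to-$1$ covering map $\pi\colon\PPP\to P$: since $\pi$ maps each of the four atomic copies isometrically onto $P$ and since the billiard orbit in $P$ with initial slope $\alpha$ is precisely the $\pi$-image of the lifted $1$-direction $\alpha$-geodesic on $\PPP$, uniform distribution upstairs against the Lebesgue measure on $\PPP$ descends to uniform distribution of the billiard orbit against the Lebesgue measure on $P$, by comparing Riemann integrals of characteristic functions of Jordan measurable test sets in $P$ and their $\pi^{-1}$-preimages in $\PPP$.

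There is essentially no main obstacle here beyond checking that the hypotheses transfer cleanly under unfolding; all the genuine dynamical content is packaged into Theorem~\ref{thm32}. The only bookkeeping that deserves explicit mention is ensuring that horizontal reflections, which are the source of the new division numbers on $\PPP$, produce heights of the form $\{rb\}$ with $r\in\Qq$, and that any $b$-gate or barrier on a vertical edge of $P$ unfolds to a $b$-gate or barrier on $\PPP$ (in particular, gates remain gates and barriers remain barriers, since the reflection is an isometry that respects the gate/barrier dichotomy on vertical edges). Once this is noted, the reduction to Theorem~\ref{thm32} is immediate and the theorem follows.
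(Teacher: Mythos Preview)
Your proposal is correct and follows exactly the approach the paper takes: the paper states Theorem~\ref{thm33} as an immediate corollary of Theorem~\ref{thm32} via the K\"{o}nig--Sz\"{u}cs unfolding, referring to \cite[Section~1]{BC} for the polysquare case and noting only that the same construction works in the $b$-rational setting. In fact you supply more detail than the paper does, in particular the explicit check that reflection across a horizontal edge sends division heights $\{r_ib\}$ to $\{-r_ib\}$ so that the unfolded surface is again polysquare-$b$-rational and the hypotheses of Theorem~\ref{thm32} are met.
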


Next we return to the $2$-square-$b$ surface and the somewhat negative Theorem~B of Veech.
If the irrational slope $\alpha$ is not badly approximable, then there exists an uncountable set of values of $b$ such that the visit-densities of the constituent squares do not even exist.
For such gate-sizes~$b$, it is perhaps natural then to call them \textit{bad}.
This raises the question of finding a quantitative description of this phenomenon, that extreme violation of uniformity can be exhibited by a concrete geodesic.

We shall give such a quantitative result which demonstrates serious violations of uniformity.
For appropriate pairs of the parameters $\alpha$ and~$b$, we shall construct a half-infinite $\alpha$-geodesic $\LLL$ on the $2$-square-$b$ surface which demonstrates \textit{extra-large one-sidedness exhibited in an alternating way}.
Such a geodesic $\LLL$ also violates any form of \textit{quasi-periodicity}.
Using a completely different method from those that give Theorems B and~D, we shall prove in Sections \ref{sec8} and~\ref{sec9} the following result.

For any $2$-square-$b$ surface, we denote by $\ls(b)$ the left constituent square of the surface, and by $\rs(b)$ the right constituent square of the surface.

\begin{thm}\label{thm34}
Suppose that $\eps>0$ is arbitrarily small but fixed, and that $\alpha\in(0,1)$ is any irrational number with continued fraction
\begin{displaymath}
\alpha=\frac{1}{a_1+\frac{1}{a_2+\frac{1}{a_3+\cdots}}}=[a_1,a_2,a_3,\ldots],
\end{displaymath}
where the digits $a_1,a_2,a_3,\ldots$ satisfy the condition
\begin{equation}\label{eq3.1}
\sum_{i=1}^\infty\frac{1}{a_i}<\frac{\eps}{300}.
\end{equation}

There exists an explicitly given gate-size $\beta_0=\beta_0(\alpha)$ such that the $\alpha$-geodesic $\LLL_0(t)$, starting from some explicitly given point on the $2$-square-$\beta_0$ surface, satisfies the following simultaneously, where $C$ is any positive integer satisfying $C<200/\eps$:

\emph{(i)}
There exists an infinite sequence $T^*_n$, $n=1,2,3,\ldots,$ of positive real numbers satisfying $T^*_{n+1}>2T^*_n$ such that for every integer $n=1,2,3,\ldots$ and for every integer $b=0,1,\ldots,C$ apart from $b=1$,
\begin{equation}\label{eq3.2}
\frac{1}{T^*_n}\vert\{t\in[bT^*_n,(b+1)T^*_n]:\LLL_0(t)\in\ls(\beta_0)\}\vert>1-\eps,
\end{equation}
with an overwhelming bias for the left constituent square of the surface, as well as
\begin{equation}\label{eq3.3}
\frac{1}{T^*_n}\vert\{t\in[T^*_n,2T^*_n]:\LLL_0(t)\in\rs(\beta_0)\}\vert>1-\eps,
\end{equation}
with an overwhelming bias for the right constituent square of the surface.

\emph{(ii)}
There exists an infinite sequence $T^{**}_n$, $n=1,2,3,\ldots,$ of positive real numbers satisfying $T^{**}_{n+1}>2T^{**}_n$ such that for every integer $n=1,2,3,\ldots$ and for every integer $b=0,1,\ldots,C$ apart from $b=2$,
\begin{equation}\label{eq3.4}
\frac{1}{T^{**}_n}\vert\{t\in[bT^{**}_n,(b+1)T^{**}_n]:\LLL_0(t)\in\ls(\beta_0)\}\vert>1-\eps,
\end{equation}
with an overwhelming bias for the left constituent square of the surface, as well as
\begin{equation}\label{eq3.5}
\frac{1}{T^{**}_n}\vert\{t\in[2T^{**}_n,3T^{**}_n]:\LLL_0(t)\in\rs(\beta_0)\}\vert>1-\eps,
\end{equation}
with an overwhelming bias for the right constituent square of the surface.

On the other hand, for any large but fixed positive integer~$n$, there exists another explicitly given gate-size $\beta_1=\beta_1(\alpha,n)$ such that
$\vert\beta_1-\beta_0\vert<\eps$ and the $\alpha$-geodesic $\LLL_1(t)$, starting from some explicitly given point on the $2$-square-$\beta_1$ surface, satisfies the following simultaneously:

\emph{(iii)}
There exists a finite sequence $W_1,\ldots,W_n$ of positive real numbers satisfying $W_{i+1}>2W_i$ whenever $i<n$ such that for every integer
$i=1,\ldots,n,$
\begin{equation}\label{eq3.6}
\frac{1}{W_i}\vert\{t\in[0,W_i]:\LLL_1(t)\in\ls(\beta_1)\}\vert>1-\eps,
\end{equation}
with an overwhelming bias for the left constituent square of the surface, as well as
\begin{equation}\label{eq3.7}
\frac{1}{W_i}\vert\{t\in[W_i,2W_i]:\LLL_1(t)\in\rs(\beta_1)\}\vert>1-\eps,
\end{equation}
with an overwhelming bias for the right constituent square of the surface.

\emph{(iv)}
There exists a positive threshold $W^\star$ such that for every positive real number $W>W^\star$,
\begin{equation}\label{eq3.8}
\frac{1}{W}\vert\{t\in[0,W]:\LLL_1(t)\in\ls(\beta_1)\}\vert>\frac{2}{3}-\eps,
\end{equation}
with a significant bias for the left constituent square of the surface.
\end{thm}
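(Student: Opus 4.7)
The plan is to construct the gate-size $\beta_0=\beta_0(\alpha)$ as an Ostrowski-type alternating series in the best approximation denominators of $\alpha$. Let $p_k/q_k$ denote the convergents to $\alpha=[a_1,a_2,a_3,\ldots]$ and set $\|q_k\alpha\|=\vert q_k\alpha-p_k\vert$. I would fix a very rapidly increasing subsequence $k_1<k_2<k_3<\cdots$ (with the growth dictated by the tail condition $\sum 1/a_i<\eps/300$, so that each stage contributes only an $\eps$-sized perturbation to what the previous stage has already arranged) and take
\begin{displaymath}
\beta_0=\beta_0(\alpha)=\tfrac{1}{2}+\sum_{j=1}^{\infty}(-1)^{j}\|q_{k_j}\alpha\|,
\end{displaymath}
choosing the starting point $x_0$ of the geodesic $\LLL_0$ in an explicit form (e.g.\ $x_0=0$ on the far-left vertical edge). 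The sequence of times $T^*_n$ will be set proportional to $q_{k_{2n-1}}$, and $T^{**}_n$ proportional to $q_{k_{2n}}$, the two parities corresponding to the two signs in the alternating series.

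The core of the argument reduces, via the discretization discussion from Section~\ref{sec1}, to controlling the parity of the counting function $\Psi(\alpha;\tau;\beta_0;N)$ over the time windows $[bT^*_n,(b+1)T^*_n]$. Writing $N=c\,q_{k_{2n-1}}+r$ and using the three-distance theorem on scale $q_{k_{2n-1}}$, the partial sum of $\beta_0$ truncated at the $(2n-1)$-st term produces a gate-size $b^\sharp$ for which $\beta_0-b^\sharp$ is microscopically small compared to $\|q_{k_{2n-1}}\alpha\|$; hence for $N$ of size up to a bounded multiple $C$ of $q_{k_{2n-1}}$, the orbit $\{N\alpha\}$ cannot distinguish $\beta_0$ from $b^\sharp=\{m\alpha\}$ for an appropriate $m$. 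By the explicit parity formulas \eqref{eq1.2}--\eqref{eq1.6} and their higher-$m$ analogues used in \textbf{Case $\bfitm=\mathbf{1}$}--\textbf{Case $\bfitm=\mathbf{4}$}, the parity is then locked to a single value throughout the window, which forces $\LLL_0(t)$ to remain in $\ls(\beta_0)$ for all but an $\eps$-fraction of times in $[bT^*_n,(b+1)T^*_n]$ for $b\ne1$, establishing \eqref{eq3.2}. The opposite inequality \eqref{eq3.3} for the window $[T^*_n,2T^*_n]$ follows from the same mechanism applied at the flipped-sign stage: the truncation at stage $2n$ reverses the parity, and the $C$-fold extension range $b=0,\ldots,C$ is precisely what the rapid growth of the $q_{k_j}$ allows before the next stage kicks in. Parts \eqref{eq3.4}--\eqref{eq3.5} are the dual statement with the roles of odd and even indices swapped.

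For the modified surface with gate-size $\beta_1$, I would truncate the series defining $\beta_0$ at index $j=2n+1$, so that $\beta_1$ differs from $\beta_0$ by an error smaller than $\|q_{k_{2n+2}}\alpha\|$, hence at most $\eps$. All the cascading bias statements \eqref{eq3.2}--\eqref{eq3.5} hold verbatim for $\LLL_1$ for stages up to~$n$, delivering the finite sequence $W_i=T^*_i$ of \eqref{eq3.6}--\eqref{eq3.7}. However, after the truncation point the gate-size $\beta_1$ becomes a finite linear combination of $1$, $\alpha$ and one further rational multiple of $\|q_{k_{2n+1}}\alpha\|$, so it is $\mathbb{Z}$-linearly rational in $\alpha$; the final stage of the construction is chosen (by a pigeonhole selection among rationally close candidates) so that the surviving bias from the last odd-indexed term persists in the long run, yielding the asymptotic density bound $2/3-\eps$ in \eqref{eq3.8}. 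The constant $2/3$ arises because the terminal stage contributes a one-sided excess proportional to one full period of the dominant convergent, whereas the subsequent equidistribution contributes the remaining $1/3$ uniformly.

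The main obstacle, as I see it, is the delicate quantitative bookkeeping of the cascade: one has to ensure that the perturbations introduced at stages $j\ge n+1$ of the series defining $\beta_0$ are small enough, relative to $q_{k_n}$, that the parity argument at stage $n$ is not disturbed, while also being large enough, relative to the stage $n-1$ scale, that the previous bias genuinely reverses. This is why the hypothesis \eqref{eq3.1} is phrased as a summability condition, rather than merely requiring each $a_i$ to be large: it is exactly what is needed to ensure that the tails of the Ostrowski series are geometrically dominated and the mutual interference between stages is controlled by $\eps$. Carrying out this bookkeeping, together with the verification that the transitional intervals between consecutive windows $[bT^*_n,(b+1)T^*_n]$ contribute negligibly to the measure estimates, will occupy the bulk of Sections~\ref{sec8} and~\ref{sec9}.
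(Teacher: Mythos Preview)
Your plan has the right flavor---Ostrowski expansions and parity control---but it misses the central device of the paper's argument and, as written, would not go through.

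The key point you are missing is that the paper does \emph{not} attempt to control the parity of a single counting function $\Psi(\alpha;0;\beta_0;N)$ for a geodesic starting at the origin. In fact the paper explicitly remarks (just after Lemma~\ref{lem81}) that the parity formula for a single $\Phi(\alpha;\beta;N)$ contains a ``translation term'' that cannot be handled. The trick is instead to choose \emph{two} Ostrowski series $\beta'=\sum c'_i\eta_i$ and $\beta''=\sum c''_i\eta_i$ with all digits $c'_i,c''_i$ even and nonzero on even indices, so that the difference $\Phi(\alpha;\beta'';N)-\Phi(\alpha;\beta';N)$ has a clean parity formula \eqref{eq8.26} in which the nuisance term cancels mod~$2$. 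Geometrically (Figure~8.1) this corresponds to a gate $[\beta',\beta'')$, which after a vertical shift is the $2$-square-$(\beta''-\beta')$ surface with starting point at height $1-\beta'$, \emph{not} the origin. Your single-series construction $\beta_0=\tfrac12+\sum(-1)^j\|q_{k_j}\alpha\|$ with $x_0=0$ has no mechanism for this cancellation, and the constant $\tfrac12$ has no natural role in the Ostrowski framework.

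Your approximation step is also not viable as stated. You propose to truncate $\beta_0$ at stage $2n-1$ to obtain some $b^\sharp=\{m\alpha\}$ and then invoke the explicit formulas \eqref{eq1.2}--\eqref{eq1.6}. But those formulas are for fixed small~$m$; the truncation of an Ostrowski series at level $k_{2n-1}$ corresponds to $m$ of order $q_{k_{2n-1}}$, and for such $m$ the parity of $\Psi$ is governed by the intricate combinatorics of Lemma~\ref{lem81}, not by anything resembling the low-$m$ cases in Section~\ref{sec2}. The paper instead works with the \emph{full} expansion (no subsequence $k_j$ is selected), taking specific digits $c'_i\in\{0,2\}$ and $c''_i\in\{0,4\}$ so that the terms $\min\{b_\ell,c_\ell\}$ and $\Delta_\ell$ in \eqref{eq8.26} can be computed explicitly (Lemmas \ref{lem83}--\ref{lem85}). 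The parity then depends essentially only on $b_0(N)$ and on the top digit $b_{k+1}$, which is what produces the clean block structure over $\BBB^*(k;b)$. The constant $2/3$ in part~(iv) arises concretely from the case analysis \eqref{eq8.72}--\eqref{eq8.73}: the single exceptional block $b=2$ (when $k$ is odd) occupies at most one third of $[0,Q)$, not from any ``terminal stage contributes one full period'' heuristic.
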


Removing the vertical barrier on the $2$-square-$\beta_0$ surface or $2$-square-$\beta_1$ surface leads to a polysquare surface which is an integrable rectangle surface.
It can then be shown that applying some slow growth conditions on the continued fraction digits of $\alpha$ without violating \eqref{eq3.1}, we obtain essentially best possible time-quantitative uniformity for any geodesic with slope $\alpha$, with polylogarithmic error term, on this integrable surface.
Thus the barrier is the root cause of the polarizingly different uniformity properties of the two geodesics with the same slope.
We omit the details.

%
%

\section{Interval exchange transformation and ergodicity}\label{sec4}

A common tool in the proofs of Theorems \ref{thm21} and~\ref{thm32} is the concept of an interval exchange transformation which represents a natural discretization of the linear flow of slope $\alpha$ on the flat translation surface.
As a first step, we need to exhibit ergodicity of this transformation, and this step is summarized by Lemmas \ref{lem41} and~\ref{lem51}.

We discuss this standard technique here, and also illustrate a second key idea, which is an application of the so-called \textit{$3$-distance theorem}, as given in Lemma~\ref{lem42}, an idea used earlier in related work by Boshernitzan \cite[Theorem~7.2 $(r=2)$]{B}.

Theorem~\ref{thm32} concerns flat finite polysquare-$b$-rational translation surfaces which often can be far more complicated than the $n$-square-$b$ surface in Theorem~\ref{thm21}.
Thus to illustrate the idea of an interval exchange transformation, we shall use instead the special case of the $(L;b)$-surface shown in Figure~3.2, as this special case already well captures the whole difficulty of the situation in general.
We shall further assume that $0<b<\alpha<1/2$, where $\alpha$ is a given irrational slope.

Before we introduce the interval exchange transformation, we first consider the effect of the $\alpha$-flow.
For convenience, we shall assume that all the gates and barriers are closed at the bottom end and open at the top end.

Let $w_1,w_2,w_3,w_4$ denote the left vertical edges of the $4$ atomic squares that make up the $(L;b)$-surface, as shown in Figure~4.1.

\begin{displaymath}
\begin{array}{c}
\includegraphics{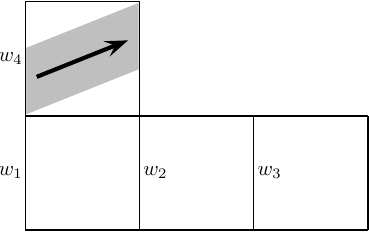}
\\
\mbox{Figure 4.1: the vertical edges $w_1,w_2,w_3,w_4$ and $\alpha$-flow}
\end{array}
\end{displaymath}

For the vertical edge~$w_1$, we denote by $w_1(0)$ and $w_1(1)$ the bottom endpoint and top endpoint of $w_1$ respectively, and denote by $w_1(x)$, where $0<x<1$, the point on $w_1$ which is a distance $x$ from $w_1(0)$.
Furthermore, for any set $S\subset[0,1]$, we let
\begin{displaymath}
w_1S=\{w_1(x):x\in S\},
\end{displaymath}
so that $w_1[0,1]=w_1$.

We now repeat this for the other $3$ vertical edges $w_2,w_3,w_4$.

Using Figures 3.2 and~4.1, we see that the $\alpha$-flow maps the interval $w_4[0,1-\alpha)$ to the interval $w_4[\alpha,1)$.
We denote this by
\begin{displaymath}
w_4[0,1-\alpha)
\mapsto
w_4[\alpha,1).
\end{displaymath}
Careful analysis now shows that the effect of the $\alpha$-flow is summarized by a collection of increasing bijective linear mappings
\begin{align}
w_1[0,1-\alpha-\tfrac{b}{2})
&\mapsto
w_1[\alpha,1-\tfrac{b}{2}),
\label{eq4.1}
\\
w_1[1-\alpha-\tfrac{b}{2},1-\alpha)
&\mapsto
w_2[1-\tfrac{b}{2},1),
\label{eq4.2}
\\
w_1[1-\alpha,1)
&\mapsto
w_4[0,\alpha),
\label{eq4.3}
\\
w_2[0,1-\alpha-b)
&\mapsto
w_2[\alpha,1-b),
\label{eq4.4}
\\
w_2[1-\alpha-b,1-\alpha)
&\mapsto
w_3[1-b,1),
\label{eq4.5}
\\
w_2[1-\alpha,1-\alpha+b)
&\mapsto
w_3[0,b),
\label{eq4.6}
\\
w_2[1-\alpha+b,1)
&\mapsto
w_2[b,\alpha),
\label{eq4.7}
\\
w_3[0,1-\alpha-b)
&\mapsto
w_3[\alpha,1-b),
\label{eq4.8}
\\
w_3[1-\alpha-b,1-\alpha-\tfrac{b}{2})
&\mapsto
w_2[1-b,1-\tfrac{b}{2}),
\label{eq4.9}
\\
w_3[1-\alpha-\tfrac{b}{2},1-\alpha)
&\mapsto
w_1[1-\tfrac{b}{2},1),
\label{eq4.10}
\\
w_3[1-\alpha,1-\alpha+b)
&\mapsto
w_1[0,b),
\label{eq4.11}
\\
w_3[1-\alpha+b,1)
&\mapsto
w_3[b,\alpha),
\label{eq4.12}
\\
w_4[0,1-\alpha)
&\mapsto
w_4[\alpha,1),
\label{eq4.13}
\\
w_4[1-\alpha,1-\alpha+b)
&\mapsto
w_2[0,b),
\label{eq4.14}
\\
w_4[1-\alpha+b,1)
&\mapsto
w_1[b,\alpha).
\label{eq4.15}
\end{align}
We next identify the edges $w_1,w_2,w_3,w_4$ with the intervals $[0,1),[1,2),[2,3),[3,4)$ respectively.
Using this identification and \eqref{eq4.1}--\eqref{eq4.15}, the effect of the $\alpha$-flow can then be described by a piecewise linear map
$T:[0,4)\to[0,4)$, where
\begin{align}
T([0,1-\alpha-\tfrac{b}{2}))
&=
[\alpha,1-\tfrac{b}{2}),
\label{eq4.16}
\\
T([1-\alpha-\tfrac{b}{2},1-\alpha))
&=
[2-\tfrac{b}{2},2),
\label{eq4.17}
\\
T([1-\alpha,1))
&=
[3,3+\alpha),
\label{eq4.18}
\\
T([1,2-\alpha-b))
&=
[1+\alpha,2-b),
\label{eq4.19}
\\
T([2-\alpha-b,2-\alpha))
&=
[3-b,3),
\label{eq4.20}
\\
T([2-\alpha,2-\alpha+b))
&=
[2,2+b),
\label{eq4.21}
\\
T([2-\alpha+b,2))
&=
[1+b,1+\alpha),
\label{eq4.22}
\\
T([2,3-\alpha-b))
&=
[2+\alpha,3-b),
\label{eq4.23}
\\
T([3-\alpha-b,3-\alpha-\tfrac{b}{2}))
&=
[2-b,2-\tfrac{b}{2}),
\label{eq4.24}
\\
T([3-\alpha-\tfrac{b}{2},3-\alpha))
&=
[1-\tfrac{b}{2},1),
\label{eq4.25}
\\
T([3-\alpha,3-\alpha+b))
&=
[0,b),
\label{eq4.26}
\\
T([3-\alpha+b,3))
&=
[2+b,2+\alpha),
\label{eq4.27}
\\
T([3,4-\alpha))
&=
[3+\alpha,4),
\label{eq4.28}
\\
T([4-\alpha,4-\alpha+b))
&=
[1,1+b),
\label{eq4.29}
\\
T([4-\alpha+b,4))
&=
[b,\alpha),
\label{eq4.30}
\end{align}
and each of \eqref{eq4.16}--\eqref{eq4.30} represents an increasing bijective linear map.
This map $T$ is known as the interval exchange transformation of the $\alpha$-flow on the $(L;b)$-surface.
It is clear that $T$ preserves Lebesgue measure.

A quick inspection of \eqref{eq4.16}--\eqref{eq4.30} shows that $T$ has many points of discontinuity.
However, if we take them modulo~$1$, then their values are given by
\begin{displaymath}
0,
\quad
1-\alpha-b,
\quad
1-\alpha-\tfrac{b}{2},
\quad
1-\alpha,
\quad
1-\alpha+b.
\end{displaymath}
We refer to these $5$ numbers as the singularities of $T$ modulo~$1$, or simply the singularities.
These are precisely the division numbers shifted by~$-\alpha$ modulo~$1$, together with $0$ and $1-\alpha$.

Suppose now that $\PPP$ is a flat finite polysquare-$b$-rational translation surface, with division numbers $\{r_ib\}$, $i=1,\ldots,R$, where each $r_i$ is rational.
Let
\begin{displaymath}
T=T_\alpha(\PPP;\{r_ib\},i=1,\ldots,R)
\end{displaymath}
denote the interval exchange transformation of the $\alpha$-flow on this surface.
Suppose that $s$ denotes the number of atomic squares in the underlying polysquare region.
Then $T:[0,s)\to[0,s)$ is a piecewise linear bijective map that preserves Lebesgue measure, and the singularities of $T$ modulo~$1$ are
\begin{equation}\label{eq4.31}
0,
\quad
1-\alpha
\quad\mbox{and}\quad
\{r_ib-\alpha\},
\quad
i=1,\ldots,R.
\end{equation}

For the remainder of this section, we concentrate on Theorem~\ref{thm21} concerning the $n$-square-$b$ surface in the special case $b=\{m\alpha\}$ for some integer $m\ge2$.
Our goal here is to establish equidistribution when the GCD Criterion fails.
We assume that $0<\alpha<1$.

The interval exchange transformation is a piecewise linear map $T:[0,n)\to[0,n)$.
It has $3$ singularities $0$, $1-\alpha$ and $\{(m-1)\alpha\}$ modulo~$1$.
The inverse transformation $T^{-1}$ has $3$ singularities $0$, $\alpha$ and $\{m\alpha\}$ modulo~$1$.

The simplest special case is $n=m=2$ with $b=\{2\alpha\}$ where $1/2<\alpha<1$.
It is easy to check that the Double Even Criterion, \textit{i.e.}, the GCD Criterion for $n=2$, fails.

To bring us one step closer to a complete proof of Theorem~\ref{thm21}, we have the following result on ergodicity.

\begin{lem}\label{lem41}
Consider $\alpha$-flow on the $n$-square-$b$ surface with $b=\{m\alpha\}$ for some integer $m\ge2$.
Suppose that the GCD Criterion fails.
Then the interval exchange transformation $T=T_{\alpha;m}:[0,n)\to[0,n)$ is ergodic.
\end{lem}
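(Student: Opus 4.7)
The plan is to argue by contradiction: suppose $T$ admits a measurable invariant set $A \subseteq [0,n)$ with $0 < \vert A \vert < n$, and then show that the failure of the GCD Criterion is incompatible with the skew-product structure of $T$ combined with the rigidity results from Section~\ref{sec2}.

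First I would exploit the fact that $T$ is an $n$-to-$1$ extension of the rotation $R_\alpha$ on the torus $[0,1)$: if $\pi\colon [0,n)\to [0,1)$ denotes reduction modulo~$1$, then $\pi\circ T = R_\alpha\circ \pi$. Since $\alpha$ is irrational, $R_\alpha$ is ergodic, so the fiber count $N(x) = \vert A\cap \pi^{-1}(x)\vert$ is $R_\alpha$-invariant and hence almost surely equal to a constant $d$ with $1\le d\le n-1$. This reduces matters to the measurable selector $S\colon [0,1)\to \binom{\{0,\ldots,n-1\}}{d}$ given by $S(x)=\{k : x+k\in A\}$.

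The next step is to convert $T$-invariance of $A$ into a cocycle equation for $S$. Because $T$ has only three singularities modulo~$1$, namely $0$, $1-\alpha$, and $\{(m-1)\alpha\}$, the permutation $\sigma_x$ describing how $T$ shuffles fibers between the $n$ copies of $[0,1)$ is piecewise constant in $x$ with these three breakpoints, and $T$-invariance of $A$ becomes the functional equation $S(R_\alpha x)=\sigma_x(S(x))$ almost everywhere. The aim is to show that any measurable solution must essentially be a Double Periodic Coloring of the $n$-square-$b$ surface in the sense of Section~\ref{sec2}, so that by Lemma~\ref{lem21} one would need $d$ to divide $\gcd(n,m,\Upsilon(m;\alpha))$, contradicting the hypothesis that the GCD Criterion fails.

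To establish this rigidity, I would iterate the cocycle relation along the $R_\alpha$-orbit of the three singularities and apply the $3$-distance theorem (Lemma~\ref{lem42}) in the manner of the Boshernitzan argument cited in the excerpt. This forces the boundaries between colors on the torus fiber to lie at the division points $\{q\alpha\}$, $q=0,1,\ldots,m-1$, from \eqref{eq2.6}; then careful bookkeeping of how colors are swapped as one crosses the gate $[0,b)$ versus the barrier $[b,1)$ reproduces precisely the Case~1 versus Case~2 dichotomy in the proof of Lemma~\ref{lem21}, yielding $d\mid n$, $d\mid m$, and $d\mid \Upsilon(m;\alpha)$. The main obstacle I anticipate is exactly this rigidity step: promoting a merely measurable selector $S$ to a piecewise-constant one whose discontinuities sit at arithmetically constrained positions, and tracking the combinatorics of the color swaps at the three singularities consistently. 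Once that is in place, the collision with the failure of the GCD Criterion is immediate.
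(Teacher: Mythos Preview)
Your outline is essentially the paper's own strategy: argue by contradiction, use that $T$ sits over the irrational rotation $R_\alpha$, deduce that a nontrivial invariant set must have integer measure and correspond to a coloring of the $n$ fibers, and then show this coloring is forced to be a Double Periodic Coloring so that Lemma~\ref{lem21} yields the contradiction. You have also correctly located the real work: the rigidity step upgrading a merely measurable invariant set to one whose boundary sits at the arithmetic division points $\{q\alpha\}$, $q=-1,0,\ldots,m-1$.

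What you are missing is a concrete mechanism for that step, and ``iterate the cocycle along orbits of the singularities and apply the $3$-distance theorem'' is not yet one. The paper's device is worth knowing. Fix a large $k$ and use the $3$-distance theorem at the special value $N=q_{k+1}-1$, where it collapses to a $2$-distance theorem: the partition $\AAA_k(\alpha)$ of $[0,1)$ by $\{q\alpha\}$, $-1\le q\le q_{k+1}-2$, has only the two gap lengths $\Vert q_k\alpha\Vert$ and $\Vert q_k\alpha\Vert+\Vert q_{k+1}\alpha\Vert$. Around each partition point $\{q\alpha\}$ with $m\le q\le q_{k+1}-2$ one forms a short interval $J_k(q)$ of length $2\Vert q_k\alpha\Vert+\Vert q_{k+1}\alpha\Vert$; these cover the long intervals cut by the $m+1$ singular points, avoid those singular points, and any two neighbors overlap in length at least $\Vert q_k\alpha\Vert$. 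A Lebesgue-density argument (the paper's Lemma~\ref{lem43}) finds one $q^*$ for which each of the $n$ lifts $\ell+J_k(q^*)$ is either $(1-\eps)$-full of $S_0$ or $\eps$-empty. Because the singularities are never crossed, the $T$-power $T^{q-q^*}$ carries this dichotomy to every $J_k(q)$; the substantial overlap then forces the $0/1$ pattern to be constant along each long interval. This is exactly the promotion from ``measurable selector'' to ``piecewise constant with breaks only at $\{q\alpha\}$, $q=-1,0,\ldots,m-1$'' that you flagged as the obstacle.

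There is a second nontrivial piece you pass over with ``careful bookkeeping'': once one has a $2$-coloring by long intervals, one must still argue it is of Double Periodic type. The paper does this by tracking the color pattern across the line segments joining $(\ell,\{(m-1)\alpha\})$ to $(\ell+1,\{m\alpha\})$, observing that $T$-invariance forces a cyclic shift by $1$ in the horizontal direction each time one drops one division level, and reading off the period $d$ and the divisibilities $d\mid n$, $d\mid m$ from this shift structure before invoking Lemma~\ref{lem21} for $d\mid\Upsilon(m;\alpha)$. Your cocycle language is fine for organizing this, but the actual combinatorics is not automatic.
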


\begin{proof}
We shall prove this by contradiction.
Assume on the contrary that $T$ is not ergodic.
Then there exists a $T$-invariant measurable subset $S_0\subset[0,n)$ such that $0<\meas(S_0)<n$, where $\meas$ denotes $1$-dimensional Lebesgue measure.
Since $T$ reduces modulo~$1$ to irrational rotation on the unit interval with the same~$\alpha$, it follows that $T$ modulo~$1$ is ergodic, and so $S_0$ modulo~$1$ is the unit interval $[0,1)$, implying that $\meas(S_0)$ is an integer strictly between $0$ and $n$.

The irrational slope $\alpha\in(0,1)$ has an infinite continued fraction expansion
\begin{equation}\label{eq4.32}
\alpha=[a_1,a_2,a_3,\ldots]=\frac{1}{a_1+\frac{1}{a_2+\frac{1}{a_3+\cdots}}},
\end{equation}
where $a_i\ge1$, $i=1,2,3,\ldots,$ are integers.
The rational numbers
\begin{equation}\label{eq4.33}
\frac{p_k}{q_k}=\frac{p_k(\alpha)}{q_k(\alpha)}=[a_1,\ldots,a_k],
\quad
k=1,2,3,\ldots,
\end{equation}
where $p_k\in\Zz$ and $q_k\in\Nn$ are coprime, are the $k$-convergents of~$\alpha$.
It is well known that they give rise to the best rational approximations of the irrational number~$\alpha$, and we have
\begin{equation}\label{eq4.34}
\frac{p_0}{q_0}
<\frac{p_2}{q_2}
<\frac{p_4}{q_4}
<\ldots
<\alpha
<\ldots
<\frac{p_5}{q_5}
<\frac{p_3}{q_3}
<\frac{p_1}{q_1},
\end{equation}
with $p_0=0$ and $q_0=1$.

Let $\Vert y\Vert$ denote the distance of a real number $y$ from the nearest integer.
We shall make use of the fact that for an irrational number~$\alpha$, the sequence
\begin{displaymath}
\min_{1\le k\le n}\Vert k\alpha\Vert,
\quad
n=1,2,3,\ldots,
\end{displaymath}
is well described by the continued fraction expansion of~$\alpha$.

For every $k=0,1,2,3,\ldots,$ we have
\begin{equation}\label{eq4.35}
\Vert q\alpha\Vert\ge\Vert q_k\alpha\Vert,
\quad
1\le q<q_{k+1},
\end{equation}
\begin{displaymath}
\Vert q_{k+1}\alpha\Vert<\Vert q_k\alpha\Vert,
\end{displaymath}
as well as
\begin{equation}\label{eq4.36}
\frac{1}{q_{k+1}+q_k}\le\Vert q_k\alpha\Vert\le\frac{1}{q_{k+1}}.
\end{equation}
Indeed, the sequences $p_k$ and $q_k$, $k=0,1,2,3,\ldots,$ are given by the initial values
\begin{displaymath}
p_0=0,
\quad
p_1=1,
\quad
q_0=1,
\quad
q_1=a_1,
\end{displaymath}
and the recurrence relations
\begin{equation}\label{eq4.37}
p_{k+1}=a_{k+1}p_k+p_{k-1},
\quad
q_{k+1}=a_{k+1}q_k+q_{k-1},
\quad
k\ge1.
\end{equation}
We also have
\begin{displaymath}
p_{k-1}q_k-q_{k-1}p_k=(-1)^k,
\quad
k\ge1.
\end{displaymath}
On the other hand, using \eqref{eq4.34} and \eqref{eq4.37}, it is easy to show that
\begin{equation}\label{eq4.38}
\Vert q_{k+1}\alpha\Vert+a_{k+1}\Vert q_k\alpha\Vert=\Vert q_{k-1}\alpha\Vert.
\end{equation}

The following result is known as the $3$-distance theorem.
This surprising geometric fact, formulated as a conjecture by Steinhaus, has many proofs, by S\'{o}s~\cite{So1,So2}, Swierczkowski~\cite{Sw},
Sur\'{a}nyi~\cite{Su}, Halton~\cite{H} and Slater~\cite{Sl}, with others published more recently.

\begin{lem}\label{lem42}
Consider the $N+1$ numbers $0,\alpha,2\alpha,3\alpha,\ldots,N\alpha$ modulo~$1$ in the unit torus/circle $[0,1)$, leading to an $(N+1)$-partition.
This partition exhibits at most $3$ different distances between neighboring points.
Furthermore, every positive integer $N$ can be expressed uniquely in the form
\begin{displaymath}
N=\mu q_k+q_{k-1}+r,
\quad
\mbox{with $1\le \mu\le a_{k+1}$ and $0\le r<q_k$},
\end{displaymath}
in terms of the continued fraction \eqref{eq4.32} of~$\alpha$ and its convergents \eqref{eq4.33}, with the convention that $q_0=1$ and $q_{-1}=0$.
Then
\begin{itemize}
\item[(i)] the distance $\Vert q_k\alpha\Vert$ shows up precisely $N+1-q_k$ times;
\item[(ii)] the distance $\Vert q_{k-1}\alpha\Vert-\mu\Vert q_k\alpha\Vert$ shows up precisely $r+1$ times; and
\item[(iii)] the distance $\Vert q_{k-1}\alpha\Vert-(\mu-1)\Vert q_k\alpha\Vert$ shows up precisely $q_k-r-1$ times.
\end{itemize}
\end{lem}

Given an integer $k\ge1$, let $\AAA_k(\alpha)$ denote the partition of the unit torus/circle $[0,1)$ with $q_{k+1}=q_{k+1}(\alpha)$ division points
$\{q\alpha\}$, $-1\le q\le q_{k+1}-2$.
Note that the choices $q=-1,0$ in $\{q\alpha\}$ represent two of the singularities of the interval exchange transformation $T$ restricted to the interval $[0,1)$.

A consequence of the special choice $N=q_{k+1}-1$ is that the $3$-distance theorem simplifies to a $2$-distance theorem.
This in turn leads to some very useful information concerning the distances between neighboring points of the $q_{k+1}$-partition $\AAA_k(\alpha)$ of the unit torus/circle $[0,1)$.
Indeed, using the second recurrence relation in \eqref{eq4.37}, we have
\begin{displaymath}
N=q_{k+1}-1=a_{k+1}q_k+q_{k-1}-1=\mu q_k+q_{k-1}+r,
\end{displaymath}
with $\mu=a_{k+1}-1$ and $r=q_k-1$.
Since $q_k-r-1=0$, it follows from the $3$-distance theorem that there are only two distances
\begin{equation}\label{eq4.39}
\Vert q_k\alpha\Vert
\quad\mbox{and}\quad
\Vert q_{k-1}\alpha\Vert-(a_{k+1}-1)\Vert q_k\alpha\Vert=\Vert q_{k+1}\alpha\Vert+\Vert q_k\alpha\Vert,
\end{equation}
in view of \eqref{eq4.38}.

It follows immediately from \eqref{eq4.35} that one of the neighbors of $0$ in the partition $\AAA_k(\alpha)$ is $\{q_k\alpha\}$ which clearly has distance
$\Vert q_k\alpha\Vert$ from $0$ in the unit torus/circle.
Since $\alpha$ is irrational, the other neighbor of $0$ in the partition $\AAA_k(\alpha)$ must have distance
$\Vert q_{k+1}\alpha\Vert+\Vert q_k\alpha\Vert$ from $0$ in the unit torus/circle.
Simple calculation then shows that it is $\{((a_{k+1}-1)q_k+q_{k-1})\alpha\}$.
Thus the two neighbors
\begin{displaymath}
\{q_k\alpha\}
\quad\mbox{and}\quad
\{((a_{k+1}-1)q_k+q_{k-1})\alpha\}
\end{displaymath}
of $0$ in the partition $\AAA_k(\alpha)$ exhibit the two gaps in \eqref{eq4.39} in some order.
Similarly, the two neighbors
\begin{displaymath}
\{(q_k-1)\alpha\}
\quad\mbox{and}\quad
\{((a_{k+1}-1)q_k+q_{k-1}-1)\alpha\}
\end{displaymath}
of $1-\alpha=\{-\alpha\}$ in the partition $\AAA_k(\alpha)$ exhibit the same two gaps in \eqref{eq4.39} in the same order.
Furthermore, for every integer $q=1,\ldots,m-1$, the two neighbors
\begin{displaymath}
\{(q_k+q)\alpha\}
\quad\mbox{and}\quad
\{((a_{k+1}-1)q_k+q_{k-1}+q)\alpha\}
\end{displaymath}
of $\{q\alpha\}$ in the partition $\AAA_k(\alpha)$ also exhibit the same two gaps in \eqref{eq4.39} in the same order.

The union of the left and right neighborhoods of $0$ in the partition $\AAA_k(\alpha)$ has the form
\begin{equation}\label{eq4.40}
B(0)=(-d^{\ast},d^{\ast\ast}).
\end{equation}
Indeed, the union of the left and right neighborhoods of $\{q\alpha\}$, $q=-1,0,1,\ldots,m-1$, in the partition $\AAA_k(\alpha)$ has the form
\begin{equation}\label{eq4.41}
B(q)=(\{q\alpha\}-d^{\ast},\{q\alpha\}+d^{\ast\ast}),
\end{equation}
with the two gaps in the same order, where
\begin{equation}\label{eq4.42}
\{d^{\ast},d^{\ast\ast}\}=\{\Vert q_k\alpha\Vert,\Vert q_{k+1}\alpha\Vert+\Vert q_k\alpha\Vert\},
\end{equation}
but we have not specified which one is which.
We refer to $B(q)$, $q=-1,0,m-1$, as the buffer zones of the singularities $1-\alpha,0,\{(m-1)\alpha\}$ respectively of~$T$.

Now suppose that $q_{k+1}$ is much greater than~$m$.

We consider the short special intervals
\begin{equation}\label{eq4.43}
J_k(q)=J(\alpha;k;q)=(\{q\alpha\}-d^{\ast\ast},\{q\alpha\}+d^{\ast}),
\quad
m\le q\le q_{k+1}-2.
\end{equation}
Note that these short special intervals have three crucial properties: 

(i) They completely cover the $m+1$ long special intervals determined by the $m+1$ division points $\{q\alpha\}$, $q=-1,0,1,\ldots,m-1$, of the
torus/circle $[0,1)$.

(ii) They avoid all the division points $\{q\alpha\}$, $q=-1,0,1,\ldots,m-1$, in view of \eqref{eq4.40}--\eqref{eq4.43}.
In particular, they avoid the singularities $1-\alpha,0,\{(m-1)\alpha\}$ of~$T$.

(iii) Any two short special intervals contained inside the same long special interval in (i) and arising from neighboring partition points exhibit \textit{substantial overlapping}.
More precisely, if $q'\ne q''$ are two integers such that $m\le q',q''\le q_{k+1}-2$ and $\{q'\alpha\}$ and $\{q''\alpha\}$ are neighboring points in the partition $\AAA_k(\alpha)$, and both points are in the same long special interval in (i), then
\begin{equation}\label{eq4.44}
\length(J_k(q')\cap J_k(q''))\ge\min\{d^{\ast},d^{\ast\ast}\}=\Vert q_k\alpha\Vert.
\end{equation}
The trivial upper bound
\begin{equation}\label{eq4.45}
\length(J_k(q))=2\Vert q_k\alpha\Vert+\Vert q_{k+1}\alpha\Vert<3\Vert q_k\alpha\Vert
\end{equation}
and \eqref{eq4.44} together justify the term \textit{substantial overlapping}.

Since $T$ acts on the interval $[0,n)$, for every interval $J_k(q)$, $m\le q\le q_{k+1}-2$, given by \eqref{eq4.43}, we define its \textit{$n$-copy extension} $J_k(q;n)$ by
\begin{displaymath}
J_k(q;n)=J_k(q)\cup(1+J_k(q))\cup\ldots\cup(n-1+J_k(q))\subset[0,n),
\end{displaymath}
a union of $J_k(q)$ with $n-1$ of its translates.

\begin{lem}\label{lem43}
Let $\eps<1/100$ be positive and fixed.
Provided that the positive integer $k$ is sufficiently large, there exists an integer $q^*$ such that $m\le q^*\le q_{k+1}-2$ and for each
$\ell=0,1,\ldots,n-1$, we have either
\begin{equation}\label{eq4.46}
\meas((\ell+J_k(q^*))\cap S_0)>(1-\eps)\meas(J_k(q^*)),
\end{equation}
or
\begin{equation}\label{eq4.47}
\meas((\ell+J_k(q^*))\cap S_0)<\eps\meas(J_k(q^*)).
\end{equation}
\end{lem}

\begin{remark}
Lemma~\ref{lem43} resembles Lebesgue's Density Theorem.
It is rather tempting to say that the latter almost implies the former, or at least makes the former quite plausible.
Nevertheless, our formal proof below does not make use of Lebesgue's Density Theorem, just the definition of Lebesgue measure.
\end{remark}

\begin{proof}[Proof of Lemma~\ref{lem43}]
Since $S_0$ is Lebesgue measurable, given any $\eta>0$, there exists a finite set of disjoint intervals $I_h$, $1\le h\le H=H(S_0;\eta)$, such that the union
\begin{equation}\label{eq4.48}
V=\bigcup_{1\le h\le H}I_h
\end{equation}
gives an $\eta$-approximation of~$S_0$, in the sense that the symmetric difference $V\triangle S_0$ satisfies the condition
\begin{equation}\label{eq4.49}
\meas(V\triangle S_0)=\meas(V\setminus S_0)+\meas(S_0\setminus V)<\eta.
\end{equation}
We will specify a suitable value of $\eta=\eta(\eps)>0$ later.

A short special interval $\ell+J_k(q)$, where $\ell=0,1,\ldots,n-1$ and $m\le q\le q_{k+1}-2$, is said to be \textit{$V$-nice} if it is either completely contained in~$V$, or it is disjoint from~$V$.

Since $V$ given by \eqref{eq4.48} is a finite union of disjoint intervals, it is clear that there exists an integer-valued threshold $k=k(S_0;V;\eta)$ such that the union of the $V$-nice short special intervals $\ell+J_k(q)$, with $\ell=0,1,\ldots,n-1$ and $m\le q\le q_{k+1}-2$, has measure at least
$n(1-\eta)$.

On the other hand, let $\BBB$ denote the set of short special intervals $\ell+J_k(q)$, where $\ell=0,1,\ldots,n-1$ and $m\le q\le q_{k+1}-2$, that are \textit{bad} in the sense that
\begin{equation}\label{eq4.50}
\frac{\meas((V\triangle S_0)\cap(\ell+J_k(q)))}{\Vert q_k\alpha\Vert}\ge\eps.
\end{equation}
Then it follows from \eqref{eq4.49} and \eqref{eq4.50} that
\begin{displaymath}
\eta
>\frac{1}{3}\sum_{\substack{{\ell=0,1,\ldots,n-1}\\{m\le q\le q_{k+1}-2}\\{\ell+J_k(q)\in\BBB}}}\meas((V\triangle S_0)\cap(\ell+J_k(q)))
\ge\frac{1}{3}\eps\vert\BBB\vert\Vert q_k\alpha\Vert,
\end{displaymath}
where the factor $1/3$ arises from the observation that an interval $J_k(q)$ intersects at most two other such intervals, namely its left and right neighbors, and $\vert\BBB\vert$ denotes the cardinality of the set~$\BBB$.
Combining this with \eqref{eq4.36}, we deduce that
\begin{equation}\label{eq4.51}
\vert\BBB\vert\le\frac{3\eta}{\eps\Vert q_k\alpha\Vert}<\frac{6\eta q_{k+1}}{\eps}.
\end{equation}
Since $\eta>0$ can be arbitrarily small, we choose $\eta=\eps^2/6$.
Then \eqref{eq4.51} simplifies to $\vert\BBB\vert<\eps q_{k+1}$.
Since $\eps$ is small, the bad short special intervals in $\BBB$ form a small minority of the short special intervals under consideration.

Thus the overwhelming majority of the short special intervals under consideration are $V$-nice and violate \eqref{eq4.50}.
A routine application of the Pigeonhole Principle now implies the existence of an integer $q^*$ such that $m\le q^*\le q_{k+1}-2$ and each interval
$\ell+J_k(q^*)$, $\ell=0,1,\ldots,n-1$, is $V$-nice and violates \eqref{eq4.50}.
For such an interval $\ell+J_k(q^*)$, it follows from \eqref{eq4.42} and \eqref{eq4.43} that
\begin{displaymath}
\meas((V\triangle S_0)\cap(\ell+J_k(q^*)))<\eps\meas(J_k(q^*)).
\end{displaymath}
Since $V\triangle S_0=(V\setminus S_0)\cup(S_0\setminus V)$ is a disjoint union, it follows that
\begin{equation}\label{eq4.52}
\meas((V\setminus S_0)\cap(\ell+J_k(q^*)))+\meas((S_0\setminus V)\cap(\ell+J_k(q^*)))<\eps\meas(J_k(q^*)).
\end{equation}

Suppose first of all that $\ell+J_k(q^*)$ is completely contained in~$V$.
Then
\begin{align}\label{eq4.53}
&
\meas((V\setminus S_0)\cap(\ell+J_k(q^*)))
=\meas((\ell+J_k(q^*))\setminus S_0)
\nonumber
\\
&\quad
=\meas(\ell+J_k(q^*))-\meas((\ell+J_k(q^*))\cap S_0),
\end{align}
while
\begin{equation}\label{eq4.54}
\meas((S_0\setminus V)\cap(\ell+J_k(q^*)))=\meas(\emptyset)=0.
\end{equation}
The assertion \eqref{eq4.46} now follows on combining \eqref{eq4.52}--\eqref{eq4.54}.

Suppose next that $\ell+J_k(q^*)$ is disjoint from~$V$.
Then
\begin{equation}\label{eq4.55}
\meas((V\setminus S_0)\cap(\ell+J_k(q^*)))=\meas(\emptyset)=0,
\end{equation}
while
\begin{equation}\label{eq4.56}
\meas((S_0\setminus V)\cap(\ell+J_k(q^*)))=\meas(S_0\cap(\ell+J_k(q^*))).
\end{equation}
The assertion \eqref{eq4.47} now follows on combining \eqref{eq4.52} and \eqref{eq4.55}--\eqref{eq4.56}.
\end{proof}

In view of Lemma~\ref{lem43}, we can define an ordered $n$-tuple
\begin{displaymath}
\Theta(k,q^*)=(\theta_0(k,q^*),\theta_1(k,q^*),\ldots,\theta_{n-1}(k,q^*)),
\end{displaymath}
where, for $\ell=0,1,\ldots,n-1$,
\begin{displaymath}
\theta_\ell(k,q^*)=\left\{\begin{array}{ll}
1,&\mbox{if $\ell+J_k(q^*)$ satisfies \eqref{eq4.46}},\\
0,&\mbox{if $\ell+J_k(q^*)$ satisfies \eqref{eq4.47}}.
\end{array}\right.
\end{displaymath}

We are now in a position to complete the proof of Lemma~\ref{lem41}.

The first key step in our argument is the extension of the \textit{local set} $J_k(q^*;n)$ globally via a $T$-power argument.

Consider an arbitrary set $J_k(q;n)$ such that $m\le q\le q_{k+1}-2$ and $q\ne q^*$.
Then
\begin{displaymath}
J_k(q;n)=T^{q-q^*}J_k(q^*;n).
\end{displaymath}
Note that $S_0\subset [0,n)$ is $T$-invariant, and that the three singularities
\begin{displaymath}
1-\alpha=\{-\alpha\},
\quad
0,
\quad
\{(m-1)\alpha\}
\end{displaymath}
modulo~$1$ never split the intervals in the process of iterated applications of the transformation~$T$.
It follows that $J_k(q;n)\cap S_0$ defines an ordered $k$-tuple $\Theta(k,q)$ which is either equal to $\Theta(k,q^*)$ or has the entries permuted.

The second key step in our argument concerns taking advantage of property (iii) earlier concerning substantial overlappings of the intervals $J_k(q)$.

Recall that the division points
\begin{displaymath}
\{q\alpha\},
\quad
q=-1,0,1,\ldots,m-1,
\end{displaymath}
of the torus/circle $[0,1)$ give rise to $m+1$ long special intervals in the torus/circle $[0,1)$.
They lead naturally to $n(m+1)$ division points and $n(m+1)$ long special intervals in $[0,n)$.
Now the sets $J_k(q;n)$, $m\le q\le q_{k+1}-2$, lead to $n(q_{k+1}-m-1)$ intervals which give rise to $n(m+1)$ collections of substantially overlapping intervals in $[0,n)$.
These $n(m+1)$ collections cover the $n(m+1)$ disjoint long special intervals.
Due to the substantial overlappings, neighboring short special intervals in the same collection must have identical ordered $n$-tuples $\Theta(k,q)$.

It follows that the short special intervals within any given long special interval $\III\subset[0,n)$ must either all satisfy \eqref{eq4.46} or all satisfy \eqref{eq4.47}.
This means that the given long special interval $\III$ is $\eps$-almost entirely in~$S_0$, in the sense that
\begin{equation}\label{eq4.57}
\meas(\III\cap S_0)>(1-\eps)\meas(\III),
\end{equation}
or is $\eps$-almost disjoint from~$S_0$, in the sense that
\begin{displaymath}
\meas(\III\cap S_0)<\eps\meas(\III).
\end{displaymath}

Let the set $S_0^*\subset[0,n)$ be defined as follows, apart from the $n(m+1)$ division points that give rise to the long special intervals.
For every long special interval $\III\subset[0,n)$, we set
\begin{displaymath}
\III\subset S_0^*
\quad\mbox{if and only if}\quad
\mbox{$\III$ satisfies \eqref{eq4.57}}.
\end{displaymath}
Then each of the long special intervals in $[0,n)$ is either entirely contained in $S_0^*$ or disjoint from~$S_0^*$.
It then remains to prove that if the GCD Criterion fails, then such a set $S_0^*$ cannot exist.
Our argument is to show that the existence of such a set $S_0^*$ would give rise to a multi-coloring of the $n$-square-$b$ surface, sufficiently restricted as to allow us to derive the necessary contradiction.

The $n(m+1)$ division points in $[0,n)$ that give rise to the $n(m+1)$ long special intervals are
\begin{displaymath}
\ell+\{-\alpha\},\ell,\ell+\{\alpha\},\ldots,\ell+\{(m-1)\alpha\},
\quad
\ell=0,1,\ldots,n-1.
\end{displaymath}
Here, for each $\ell=0,1,\ldots,n-1$, we view the left vertical edge of the $(\ell+1)$-th square face of the $n$-square-$b$ surface as the interval
$[\ell,\ell+1)$.
We can then $2$-color these $n$ intervals to distinguish points in $S_0^*$ from points not in~$S_0^*$.
This gives rise to a $2$-coloring of the set $[0,n)$.

For ease of description, let us denote the bottom left vertex of the $1$-st square face and the top right vertex of the $n$-th square face of the
$n$-square-$b$ surface by $(0,0)$ and $(n,1)$ respectively.
Using the $\alpha$-flow, the $n(m+1)$ division points now lead to $n(m+1)$ line segments, linking pairs of points
\begin{equation}\label{eq4.58}
\begin{array}{ccc}
(\ell,\{-\alpha\})
&
\mbox{and}
&
(\ell+1,1),
\\
(\ell,0)
&
\mbox{and}
&
(\ell+1,\{\alpha\}),
\\
(\ell,\{\alpha\})
&
\mbox{and}
&
(\ell+1,\{2\alpha\}),
\\
&\vdots
\\
(\ell,\{(m-1)\alpha\})
&
\mbox{and}
&
(\ell+1,\{m\alpha\}),
\end{array}
\quad
\ell=0,1,\ldots,n-1,
\end{equation}
lying on the $(\ell+1)$-th square face of the $n$-square-$b$ surface.

We shall show later that the line segments linking the points
\begin{equation}\label{eq4.59}
(\ell,\{-\alpha\})
\quad\mbox{and}\quad
(\ell+1,1),
\quad
\ell=0,1,\ldots,n-1,
\end{equation}
do not come into the argument.

Note first of all that $\{m\alpha\}$ is not a division point of the vertical edges of the $n$-square-$b$ surface.
As in Section~\ref{sec2}, write $b=b_\nu=\{m\alpha\}$.
Using \eqref{eq2.3}, we see that $\{m\alpha\}$ is in the interior of the interval $[b_{\nu-1},b_{\nu+1})$, of the form \eqref{eq2.6}.

Suppose that $\meas(S_0^*)=\meas(S_0)=\tau$, so that precisely $\tau$ of the intervals
\begin{equation}\label{eq4.60}
\ell+[b_{\nu-1},b_{\nu+1}),
\quad
\ell=0,1,\ldots,n-1,
\end{equation}
belong to $S_0^*$.
Assign the color $B$ or $W$ to an interval in \eqref{eq4.60} according to whether it is contained in $S_0^*$ or is disjoint from $S_0^*$.
This gives rise to a $2$-coloring sequence of $n$ terms, corresponding to the $n$ intervals \eqref{eq4.60} on the left vertical edges of the square faces of the $n$-square-$b$ surface and made up of $\tau$ copies of $B$ and $(n-\tau)$ copies of~$W$.
We determine a shortest subsequence of consecutive terms of this $2$-coloring sequence of length $d>1$ such that the $2$-coloring sequence modulo $n$ is the $d$-term subsequence repeated $n/d$ times.
In particular, the number $d>1$ must divide~$n$.
In view of cyclic periodicity, we may restrict our attention to the $d$ square faces corresponding to this $d$-term subsequence.

For these $d$ square faces under consideration, we color the interval $[b_{\nu-1},b_{\nu+1})$ on the left vertical edges according to the $2$-coloring subsequence of length~$d$, and then use the $\alpha$-flow to spread this $2$-coloring of the intervals to the relevant square faces.
Figure~4.2 below, which is not to scale, illustrates our observations thus far in the case $d=4$, where the $2$-coloring subsequence $W,B,B,B$ has length~$4$.

\begin{displaymath}
\begin{array}{c}
\includegraphics{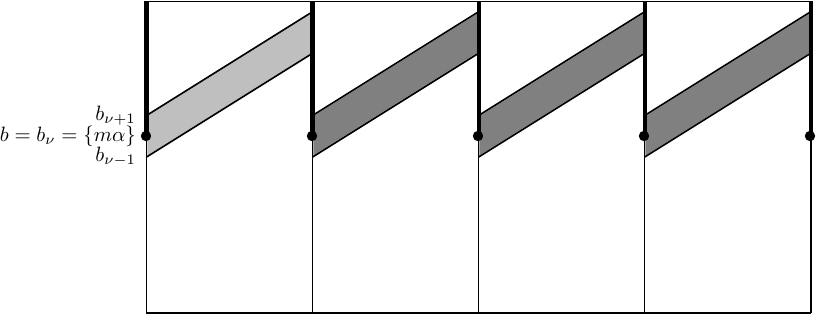}
\\
\mbox{Figure 4.2: a partial $2$-coloring on $4$ consecutive square faces}
\\
\mbox{of the $n$-square-$b$ surface where $4$ divides $n$}
\end{array}
\end{displaymath}

We next consider the line segments linking the pairs of points
\begin{equation}\label{eq4.61}
(\ell,\{(m-1)\alpha\})
\quad\mbox{and}\quad
(\ell+1,\{m\alpha\})
\end{equation}
on the square faces under consideration.
Since $\{(m-1)\alpha\}$ is one of the division points, it follows from \eqref{eq2.3} that there exists a unique $\mu=0,1,\ldots,m$ such that $\mu\ne\nu$ and $\{(m-1)\alpha\}=b_\mu$.
Let $b_{\mu-1}$ and $b_{\mu+1}$ be the closest division points to $b_\mu$ from below and above respectively.
We next investigate the coloring of the intervals $[b_{\mu-1},b_\mu)$ and $[b_\mu,b_{\mu+1})$ on the left vertical edges of the square faces.
The $T$-invariance of~$S_0$, and hence~$S_0^*$, clearly dictates that the interval $[b_\mu,b_{\mu+1})$ must have the same coloring as the interval
$[b_{\nu-1},b_{\nu+1})$ on the left vertical edge of the same square face, whereas the interval $[b_{\mu-1},b_\mu)$ must have the same coloring as the interval $[b_{\nu-1},b_{\nu+1})$ on the left vertical edge of the square face immediately to the right.
Figure~4.3 continues with our example, and we see that the $2$-coloring sequence of the intervals $[b_\mu,b_{\mu+1})$ in the $4$ square faces remain $W,B,B,B$, whereas the $2$-coloring sequence of the intervals $[b_{\mu-1},b_\mu)$ in the $4$ square faces becomes $B,B,B,W$, representing a shift by $1$ to the left of the original $2$-coloring sequence.
Furthermore, the color pattern sequence across the line segments \eqref{eq4.61} is
\begin{equation}\label{eq4.62}
WB,BB,BB,BW,
\end{equation}
where, for instance, $WB$ denotes $W$ above and $B$ below.

\begin{displaymath}
\begin{array}{c}
\includegraphics{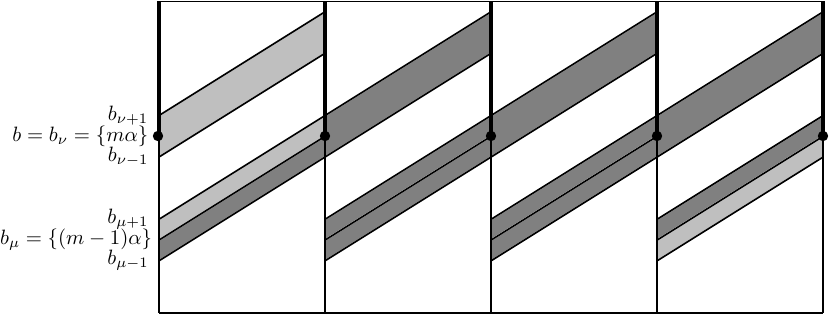}
\\
\mbox{Figure 4.3: extending the $2$-coloring on $4$ consecutive square faces}
\\
\mbox{of the $n$-square-$b$ surface where $4$ divides $n$}
\end{array}
\end{displaymath}

Let $b_{\mu-2}$ denote the closest division point to $b_{\mu-1}$ from below, and let $b_{\nu-2}$ denote the closest division point to $b_{\nu-1}$ from below.
Note that $b_{\nu-2}=\{b_{\mu-2}+\alpha\}$.
We next consider the line segments linking the pairs of points $(\ell,b_{\mu-2})$ and $(\ell+1,b_{\nu-2})$ on the square faces under consideration.
As these line segments can be obtained from those in \eqref{eq4.61} under the inverse transformation~$T^{-1}$, the color patterns across them must be preserved.
Cyclic periodicity and the maximality of $d$ then dictate that they must appear in the same order modulo~$d$.
The unique solution to this problem is a shift by $1$ to the left of the earlier color pattern sequence across the line segments.
Figure~4.4 illustrates this observation in our continuing example.

\begin{displaymath}
\begin{array}{c}
\includegraphics{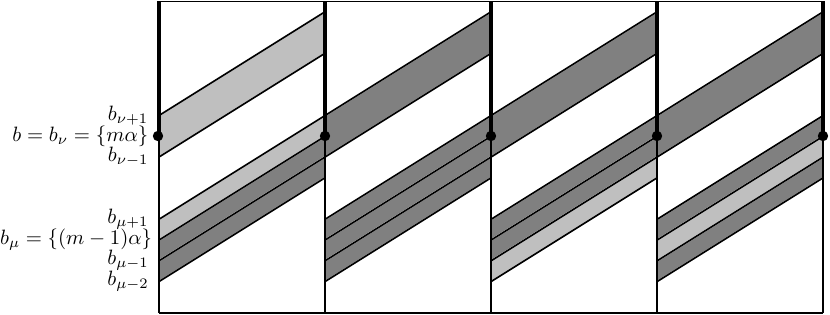}
\\
\mbox{Figure 4.4: cyclic periodicity at work on $4$ consecutive square faces}
\\
\mbox{of the $n$-square-$b$ surface where $4$ divides $n$}
\end{array}
\end{displaymath}

Note that the new color pattern sequence across the line segments is
\begin{equation}\label{eq4.63}
BB,BB,BW,WB,
\end{equation}
a shift to the left by $1$ of the sequence \eqref{eq4.62}.

Let $b_{\mu-3}$ denote the closest division point to $b_{\mu-2}$ from below, and let $b_{\nu-3}$ denote the closest division point to $b_{\nu-2}$ from below.
Note that $b_{\nu-3}=\{b_{\mu-3}+\alpha\}$.
We next consider the line segments linking the pairs of points $(\ell,b_{\mu-3})$ and $(\ell+1,b_{\nu-3})$ on the square faces under consideration.
Again, the color patterns across them must be preserved.
Cyclic periodicity and the maximality of $d$ then dictate that they must again appear in the same order modulo~$d$.
The unique solution to this problem is a shift by $1$ to the left of the earlier color pattern sequence across the line segments.
Figure~4.5 illustrates this observation in our continuing example.

\begin{displaymath}
\begin{array}{c}
\includegraphics{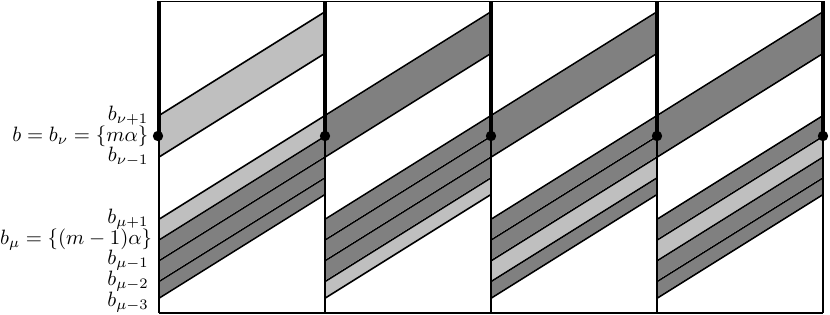}
\\
\mbox{Figure 4.5: cyclic periodicity at work on $4$ consecutive square faces}
\\
\mbox{of the $n$-square-$b$ surface where $4$ divides $n$}
\end{array}
\end{displaymath}

Note that the new color pattern sequence across the line segments is
\begin{displaymath}
BB,BW,WB,BB
\end{displaymath}
a shift to the left by $1$ of the sequence \eqref{eq4.63}.

The reader will by now observe that this $2$-coloring can be replaced by a $d$-coloring, and all the properties we have described so far will be preserved.
For instance, for our continuing example, Figure~4.5 can be replaced by Figure~4.6.

\begin{displaymath}
\begin{array}{c}
\includegraphics{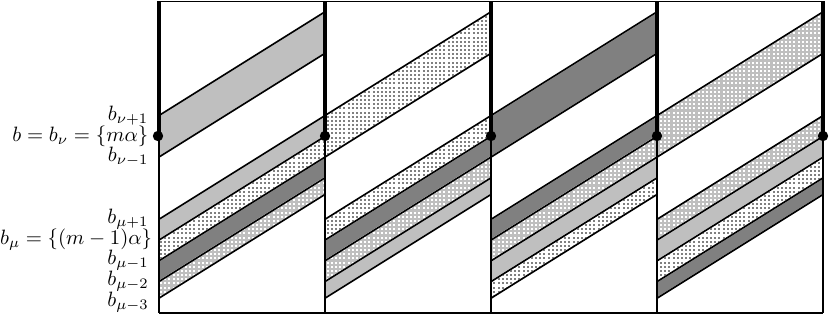}
\\
\mbox{Figure 4.6: a partial $4$-coloring on $4$ consecutive square faces}
\\
\mbox{of the $n$-square-$b$ surface where $4$ divides $n$}
\end{array}
\end{displaymath}

Proceeding in the same way will allow us eventually to $d$-color the entire $d$ square faces under consideration.

We have claimed earlier that the line segments linking the points \eqref{eq4.59} do not come into the argument.
It can easily be shown that the color pattern across each of these line segments is monochromatic.
Thus there are only $m$ division points on the left vertical edge of each square face.
The cyclic periodicity described earlier now shows that we have a double periodic $d$-coloring pattern on the $d$ square faces analogous to \eqref{eq2.7} and which can be obtained by the Double Periodic Coloring Algorithm.
It follows that $d$ must divide~$m$.
Furthermore, each color represents a $T$-invariant subset of the $n$-square-$b$ surface.
It now follows from Lemma~\ref{lem21} that $d$ also divides $\Upsilon(m;\alpha)$.
Since $d\ge2$, it follows that the GCD Criterion is satisfied, a contradiction.
Hence such a non-trivial $T$-invariant subset $S_0$ of $[0,n)$ does not exist, and it follows that $T$ is ergodic.
\end{proof}

Lemma~\ref{lem41} tells us that if the GCD Criterion fails, then $T$ is ergodic.
Birkhoff's ergodic theorem then gives equidistribution of the half-infinite $\alpha$-geodesic on the $n$-square-$b$ surface for \textit{almost every} starting point.
This time-qualitative result arises from our $2$-distance method.
Later in Section~\ref{sec6}, we shall extend this result to half-infinite geodesics with \textit{any} starting point.

%
%

\section{Starting the proof of Theorem~\ref{thm32}: proving ergodicity}\label{sec5}

Let $\PPP$ be an arbitrary flat finite polysquare-$b$-rational translation surface with $s$ atomic squares and with division numbers $\{r_ib\}$, $i=1,\ldots,R$, where each $r_i$ is rational and $b$ is irrational.
Let
\begin{displaymath}
T=T_\alpha(\PPP;\{r_ib\},i=1,\ldots,R)
\end{displaymath}
denote the interval exchange transformation of the $\alpha$-flow on this surface.
Then $T$ maps the interval $[0,s)$ to itself.

Since reflecting a polysquare-$b$-rational translation surface across a horizontal or vertical line gives rise to another polysquare-$b$-rational translation surface, we can assume, without loss of generality, that the slope satisfies $0<\alpha<\infty$.

To bring us one step closer to a complete proof of Theorem~\ref{thm32}, we have the following analog of Lemma~\ref{lem41} on ergodicity.

\begin{lem}\label{lem51}
Consider $\alpha$-flow on an arbitrary polysquare-$b$-rational translation surface $\PPP$ with $s$ atomic squares and division numbers $\{r_ib\}$, $i=1,\ldots,R$, where each $r_i$ is rational and $b$ is irrational.
Assume that
\begin{equation}\label{eq5.1}
\{r_ib\}\ne\{m\alpha\},
\quad
i=1,\ldots,R,
\quad
m\in\Zz\setminus\{0\}.
\end{equation}
Then the interval exchange transformation $T:[0,s)\to[0,s)$ is ergodic.
\end{lem}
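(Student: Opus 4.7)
The plan is to follow the contradiction scheme of Lemma~\ref{lem41}, adapted to the more general polysquare-$b$-rational setting. Assume for contradiction that $T$ is not ergodic, so there exists a $T$-invariant measurable subset $S_0\subset[0,s)$ with $0<\meas(S_0)<s$. Reducing modulo~$1$, $T$ acts as irrational rotation by $\alpha$ on $[0,1)$, which is ergodic, so $S_0$ modulo~$1$ fills $[0,1)$ up to measure zero; hence $\meas(S_0)\in\{1,2,\ldots,s-1\}$.

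First I would set up the $2$-distance machinery from the proof of Lemma~\ref{lem41}. The singularities of $T$ modulo~$1$ are now the finite set
\[
\Sigma=\{0,\,1-\alpha\}\cup\{\{r_ib-\alpha\}:i=1,\ldots,R\},
\]
as listed in \eqref{eq4.31}. Fix a large convergent denominator $q_{k+1}$ of $\alpha$ and consider the partition of $[0,1)$ by the $q_{k+1}$ points $\{q\alpha\}$, $-1\le q\le q_{k+1}-2$; the $2$-distance theorem gives only gaps of size $\Vert q_k\alpha\Vert$ and $\Vert q_k\alpha\Vert+\Vert q_{k+1}\alpha\Vert$. Around each singularity in $\Sigma$ I would carve out a buffer zone of size comparable to $\Vert q_k\alpha\Vert$, exactly as in \eqref{eq4.40}--\eqref{eq4.42}, and define short special intervals $J_k(q)$ indexed by those $q$ whose orbit point lies outside every buffer zone. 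Since $|\Sigma|=R+2$ is fixed while $q_{k+1}\to\infty$, these intervals still cover the complement of the buffers, and consecutive ones inside a single long gap between buffers have substantial overlap of length at least $\Vert q_k\alpha\Vert$, by the analog of \eqref{eq4.44}--\eqref{eq4.45}.

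Next I would prove the exact analog of Lemma~\ref{lem43}: there is an index $q^*$ such that each translate $\ell+J_k(q^*)$, $\ell=0,1,\ldots,s-1$, is either $(1-\eps)$-filled by $S_0$ or $(1-\eps)$-disjoint from $S_0$. The argument is unchanged, using an outer approximation of $S_0$ by finite unions of intervals together with a pigeonhole bound on ``bad'' short intervals. This associates to each $\ell$ a binary label $\theta_\ell\in\{0,1\}$, giving an ordered $s$-tuple $\Theta(k,q^*)$. By the $T$-invariance of $S_0$ and the fact that applying $T$ does not cut short special intervals (they were chosen to miss every singularity), the labels transport along the $T$-orbit, so every $J_k(q)$ inherits a permutation of $\Theta(k,q^*)$. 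The substantial-overlap property then forces the labels to be constant on each long special interval of $[0,s)$ determined by the singularities. Hence $S_0$ agrees, up to arbitrarily small error, with a non-trivial union $S_0^\star$ of entire long special intervals of $[0,s)$; equivalently, we obtain a non-trivial $T$-invariant $2$-coloring of $[0,s)$ whose color boundaries lie in the finite set of singularities of $T$.

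The main obstacle, and the step where the hypothesis \eqref{eq5.1} enters, is to rule out the existence of such a coloring. This is where I would invoke the separation lemma, Lemma~\ref{lem52}, stated immediately below. In the setting of Lemma~\ref{lem41} the analogous step was carried out by hand using the Double Periodic Coloring Algorithm and Lemma~\ref{lem21}, and the conclusion depended on the failure of the GCD Criterion; here the role of that combinatorial analysis is played by Lemma~\ref{lem52}. The coloring of the vertical edges of $\PPP$ must be consistent simultaneously with the polysquare edge identifications, with transport along the $\alpha$-flow (which permutes the vertical edges up to translation), and with the break locations at the $b$-rational division numbers $\{r_ib\}$. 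Iterating $T$ and $T^{\pm 1}$, every color boundary must lie in the $\Zz$-module generated by $1$, $\alpha$ and the numbers $\{r_ib\}$. Lemma~\ref{lem52} will assert that when $b$ is irrational and the non-resonance condition \eqref{eq5.1} holds, the only way such boundaries can close up consistently under the edge identifications is for the coloring to be monochromatic. This contradicts $0<\meas(S_0)<s$, and $T$ is therefore ergodic. The heart of the difficulty lies squarely in this separation step, and this is precisely why the remark following Theorem~\ref{thm32} explains that the whole theorem cannot presently be pushed beyond the $b$-rational framework.
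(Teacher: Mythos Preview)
Your overall contradiction scheme matches the paper's architecture, but you have misplaced Lemma~\ref{lem52}, and this leaves a real gap in the transport step. In Lemma~\ref{lem41} every singularity of $T$ modulo~$1$ was of the form $\{q\alpha\}$ for a small integer~$q$, hence a division point of the partition $\AAA_k(\alpha)$; the design \eqref{eq4.43} then kept all $J_k(q)$ in the working range away from them automatically. Here the new singularities $\{r_ib-\alpha\}$ are \emph{not} on the $\alpha$-orbit, and carving buffers around them and deleting the few offending $J_k(q)$ does not repair the $T$-power argument: to propagate from $\Theta(k,q^*)$ to $\Theta(k,q)$ you must pass through every intermediate $J_k(q^*+j)$, and whenever $q^*+j$ equals one of the indices $h_k(i;\sigma)$ (the $q$ for which $\{q\alpha\}$ neighbours $\{r_ib-\alpha\}$), the map $T$ can send the two halves of $\ell+J_k(q^*+j)$ to different squares, destroying the $s$-tuple. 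This is exactly where the paper invokes Lemma~\ref{lem52}: it is a \emph{diophantine} separation statement (and it is here that the badly-approximable hypothesis on $\alpha$ enters) guaranteeing that for infinitely many $k$ the bad indices $h_k(i;\sigma)$ are pairwise at distance $>\delta q_{k+1}$ and also $>\delta q_{k+1}$ from $0$ and $q_{k+1}$. The complement then breaks into long blocks of consecutive integers; the strengthened density Lemma~\ref{lem53} supplies a good $q^*$ in each block, and the $T$-power transport runs safely inside a block (Lemma~\ref{lem54}).

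Consequently Lemma~\ref{lem52} is not the endgame obstruction you describe. The paper's final step, ruling out the non-trivial $2$-coloring by long critical intervals, is short and self-contained and uses only \eqref{eq5.1} and the irrationality of~$\alpha$: a colour switch across $0$ or $\{-\alpha\}$ flows in one $T^{\pm1}$-step to a switch at $\{\alpha\}$ or $\{-2\alpha\}$, neither of which is a singularity; a switch across some $\{r_{i_0}b-\alpha\}$ must, under forward flow, remain a switch at a singularity at every step, and since the singularities take only finitely many values modulo~$1$, pigeonhole gives $n_1<n_2$ with $(n_2-n_1)\alpha\in\Zz$, a contradiction. Your instinct about the flavour of this last step was right, but it does not call on Lemma~\ref{lem52} at all.
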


\begin{proof}
We shall prove this by contradiction.
Assume on the contrary that $T$ is not ergodic.
Then there exists a $T$-invariant measurable subset $S_0\subset[0,s)$ such that $0<\meas(S_0)<s$.
Since $T$ reduces modulo~$1$ to irrational rotation on the unit interval with the same~$\alpha$, it follows that $T$ modulo~$1$ is ergodic, and so $S_0$ modulo~$1$ is the unit interval $[0,1)$, implying that $\meas(S_0)\in\{1,2,\ldots,s-1\}$.

The irrational slope $\alpha\in(0,\infty)$ has an infinite continued fraction expansion
\begin{equation}\label{eq5.2}
\alpha=[a_0;a_1,a_2,a_3,\ldots]=a_0+\frac{1}{a_1+\frac{1}{a_2+\frac{1}{a_3+\cdots}}},
\end{equation}
where $a_0\ge0$, $a_i\ge1$, $i=1,2,3,\ldots,$ are integers.
The rational numbers
\begin{displaymath}
\frac{p_k}{q_k}=\frac{p_k(\alpha)}{q_k(\alpha)}=[a_0;a_1,\ldots,a_k],
\quad
k=0,1,2,3,\ldots,
\end{displaymath}
where $p_k\in\Zz$ and $q_k\in\Nn$ are coprime, are the $k$-convergents of~$\alpha$.
Since $\alpha$ is badly approximable, there exists an integer $A$ such that
\begin{equation}\label{eq5.3}
a_0,a_1,a_2,a_3,\ldots\le A.
\end{equation}

As in the proof of Lemma~\ref{lem41}, we again use the $3$-distance theorem as stated in Lemma~\ref{lem42}.
We also work with the same partition $\AAA_k(\alpha)$ of the unit torus/circle $[0,1)$ with $q_{k+1}=q_{k+1}(\alpha)$ division points $\{q\alpha\}$,
$-1\le q\le q_{k+1}-2$.
Note that the choices $q=-1,0$ in $\{q\alpha\}$ represent two of the singularities of the interval exchange transformation $T$ restricted to the interval $[0,1)$.
Here $k\ge1$ is an integer chosen to be sufficiently large.

Recall that for the special choice $n=q_{k+1}-1$, the $3$-distance theorem simplifies to a $2$-distance theorem, and that there are only two distances
\begin{displaymath}
\Vert q_k\alpha\Vert
\quad\mbox{and}\quad
\Vert q_{k+1}\alpha\Vert+\Vert q_k\alpha\Vert
\end{displaymath}
between adjacent partition points in~$\AAA_k(\alpha)$.
Thus the union of the left and right neighborhoods of $0$ in the partition $\AAA_k(\alpha)$ has the form
\begin{equation}\label{eq5.4}
B(0)=(-d^{\ast},d^{\ast\ast}),
\end{equation}
while the union of the left and right neighborhoods of $\{-\alpha\}$ in the partition $\AAA_k(\alpha)$ has the form
\begin{equation}\label{eq5.5}
B(-1)=(\{-\alpha\}-d^{\ast},\{-\alpha\}+d^{\ast\ast}),
\end{equation}
with the two gaps in the same order, where
\begin{equation}\label{eq5.6}
\{d^{\ast},d^{\ast\ast}\}=\{\Vert q_k\alpha\Vert,\Vert q_{k+1}\alpha\Vert+\Vert q_k\alpha\Vert\},
\end{equation}
but we have not specified which one is which.

We consider the short special intervals
\begin{equation}\label{eq5.7}
J_k(q)=J(\alpha;k;q)=[\{q\alpha\}-d^{\ast\ast},\{q\alpha\}+d^{\ast}),
\quad
1\le q\le q_{k+1}-2.
\end{equation}
Note that these short special intervals have three crucial properties: 

(i) They completely cover the two long special intervals determined by the two division points $0$ and $\{-\alpha\}$ of the torus/circle $[0,1)$.

(ii) They avoid the singularities $0$ and $\{-\alpha\}$ of~$T$, in view of \eqref{eq5.4}--\eqref{eq5.7}.

(iii) Any two short special intervals inside the same long special interval in (i) arising from neighboring partition points exhibit \textit{substantial overlapping}.

Recall from \eqref{eq4.31} that the singularities of $T$ modulo~$1$ are $0$ and $\{-\alpha\}$, together with $\{r_ib-\alpha\}$, $i=1,\ldots,R$.
These latter singularities require extra care, and we deviate from the proof of Lemma~\ref{lem41}.
Our new argument depends on a crucial but rather complicated technical lemma.
To formulate this, we first need some notation and definitions.

For each $i=1,\ldots,R$, we write the rational number $r_i$ in the form
\begin{equation}\label{eq5.8}
r_i=\frac{u_i}{v_i},
\quad
\mbox{with coprime $u_i\in\Zz$ and $v_i\in\Nn$}.
\end{equation}
Furthermore, we write
\begin{equation}\label{eq5.9}
U=\max_{1\le i\le R}\vert u_i\vert
\quad\mbox{and}\quad
V=\max_{1\le i\le R}\vert v_i\vert.
\end{equation}
On the other hand, let the integer $k\ge1$ be given.
For every $i=1,\ldots,R$, let
\begin{displaymath}
h_k(i;+)=h_k(\alpha;r_ib;+)
\quad\mbox{and}\quad
h_k(i;-)=h_k(\alpha;r_ib;-)
\end{displaymath}
denote the two integers satisfying $-1\le h_k(i;+),h_k(i;-)\le q_{k+1}-2$ such that
\begin{displaymath}
\{h_k(i;+)\alpha\}
\quad\mbox{and}\quad
\{h_k(i;-)\alpha\}
\end{displaymath}
are the two neighbors of the singularity $\{r_ib-\alpha\}$ in the partition $\AAA_k(\alpha)$ of the unit torus/circle $[0,1)$.
Clearly, for $\sigma=\pm$, we have
\begin{align}\label{eq5.10}
&
\Vert\{h_k(i;\sigma)\alpha\}-\{r_ib-\alpha\}\Vert
<\Vert\{h_k(i;+)\alpha\}-\{h_k(i;-)\alpha\}\Vert
\nonumber
\\
&\quad
\le\Vert q_{k+1}\alpha\Vert+\Vert q_k\alpha\Vert
\le\frac{1}{q_{k+2}}+\frac{1}{q_{k+1}}
<\frac{2}{q_{k+1}}.
\end{align}
It then follows from \eqref{eq5.10} that for every $i=1,\ldots,R$ and $\sigma =\pm$,
\begin{displaymath}
h_k(i;\sigma)=h_k(\alpha;r_ib;\sigma)\to\infty
\quad\mbox{as $k\to\infty$},
\end{displaymath}
for otherwise there exists an integer value $m$ such that $h_k(i;\sigma)=m$ for infinitely many distinct values of~$k$, and so the corresponding limit in \eqref{eq5.10} must have the value $\Vert m\alpha-\{r_ib-\alpha\}\Vert=0$, which contradicts the hypothesis \eqref{eq5.1}.

We have the following separation lemma.

\begin{lem}\label{lem52}
Let $\alpha\in(0,\infty)$ be badly approximable, with continued fraction \eqref{eq5.2} and digits satisfying \eqref{eq5.3}, where $A$ is a fixed positive integer.
Write
\begin{equation}\label{eq5.11}
\delta=\frac{1}{100(A+2)^2U^4V^5},
\end{equation}
where $U$ and $V$ are given by \eqref{eq5.8} and \eqref{eq5.9}.
Then there exists an infinite set
\begin{displaymath}
\KKK_0=\KKK_0(\alpha;r_ib,i=1,\ldots,R)
\end{displaymath}
of positive integers such that for every $k\in\KKK_0$, the following hold:

\emph{(i)}
For every $i=1,\ldots,R$ and $\sigma=\pm$, we have
\begin{equation}\label{eq5.12}
\delta q_{k+1}<h_k(i;\sigma)<\left( 1-\delta\right)q_{k+1}.
\end{equation}

\emph{(ii)}
For every $i_1,i_2=1,\ldots,R$ and $\sigma_1,\sigma_2=\pm$ such that $(i_1,\sigma_1)\ne(i_2,\sigma_2)$, we have
\begin{equation}\label{eq5.13}
\vert h_k(i_1;\sigma_1)-h_k(i_2;\sigma_2)\vert>\delta q_{k+1}.
\end{equation}
\end{lem}

The underlying idea of the proof of Lemma~\ref{lem52} is quite simple.
Unfortunately, the details are rather complicated and involve a case study.
We thus postpone the proof to Section~\ref{sec7}.

Since $T$ acts on the interval $[0,s)$, for every interval $J_k(q)$, $1\le q\le q_{k+1}-2$, given by \eqref{eq5.7}, we define its \textit{$s$-copy extension} $J_k(q;s)$ by
\begin{displaymath}
J_k(q;s)=J_k(q)\cup(1+J_k(q))\cup\ldots\cup((s-1)+J_k(q))\subset[0,s),
\end{displaymath}
a union of $J_k(q)$ with $s-1$ of its translates.

We have a more complicated variant of Lemma~\ref{lem43}.

\begin{lem}\label{lem53}
Let $\delta$ be given by \eqref{eq5.11}, and let
\begin{displaymath}
0<\eps<\frac{\delta}{100}.
\end{displaymath}
Provided that the positive integer $k$ is sufficiently large, for any subset
\begin{displaymath}
\WWW\subset\{1,2,3,\ldots,q_{k+1}-2\}
\end{displaymath}
such that the cardinality $\vert\WWW\vert\ge\delta q_{k+1}$, there exists an integer $q^*\in\WWW$ such that for each $\ell=0,1,\ldots,s-1$, we have either
\begin{equation}\label{eq5.14}
\meas((\ell+J_k(q^*))\cap S_0)>(1-\eps)\meas(J_k(q^*)),
\end{equation}
or
\begin{equation}\label{eq5.15}
\meas((\ell+J_k(q^*))\cap S_0)<\eps\meas(J_k(q^*)).
\end{equation}
\end{lem}

\begin{proof}
Since $S_0$ is Lebesgue measurable, given any $\eta>0$, there exists a finite set of disjoint intervals $I_h$, $1\le h\le H=H(S_0;\eta)$, such that the union
\begin{displaymath}
V=\bigcup_{1\le h\le H}I_h
\end{displaymath}
gives an $\eta$-approximation of~$S_0$.
Let $\BBB$ denote the set of \textit{bad} short special intervals $\ell+J_k(q)$, where $\ell=0,1,\ldots,s-1$ and $1\le q\le q_{k+1}-2$, in the sense that
\begin{displaymath}
\frac{\meas((V\triangle S_0)\cap(\ell+J_k(q)))}{\Vert q_k\alpha\Vert}\ge\eps.
\end{displaymath}
Mimicking the proof of Lemma~\ref{lem43} and choosing $\eta=\eps^2/6$, we deduce that
\begin{equation}\label{eq5.16}
\vert\BBB\vert\le\frac{3\eta}{\eps\Vert q_k\alpha\Vert}<\frac{6\eta q_{k+1}}{\eps}<\eps q_{k+1}<\frac{\delta q_{k+1}}{100}.
\end{equation}

Suppose on the contrary that the conclusion of Lemma~\ref{lem53} fails.
Again mimicking the proof of Lemma~\ref{lem43}, we can show that there exists a subset
\begin{displaymath}
\WWW_0\subset\{1,2,3,\ldots,q_{k+1}-2\},
\end{displaymath}
with cardinality $\vert\WWW_0\vert\ge\delta q_{k+1}$, such that for every integer $q\in\WWW_0$, there exists $\ell=\ell(q)$ satisfying $0\le\ell\le s-1$ such that
\begin{displaymath}
\frac{\meas((V\triangle S_0)\cap(\ell(q)+J_k(q)))}{\Vert q_k\alpha\Vert}\ge\eps.
\end{displaymath}
Thus $\vert\BBB\vert\ge\vert\WWW_0\vert\ge\delta q_{k+1}$, contradicting \eqref{eq5.16}.
This completes the proof.
\end{proof}

In view of Lemma~\ref{lem53}, we can define an ordered $s$-tuple
\begin{equation}\label{eq5.17}
\Theta(k,q^*)=(\theta_0(k,q^*),\theta_1(k,q^*),\ldots,\theta_{s-1}(k,q^*)),
\end{equation}
where, for $\ell=0,1,\ldots,s-1$,
\begin{equation}\label{eq5.18}
\theta_\ell(k,q^*)=\left\{\begin{array}{ll}
1,&\mbox{if $\ell+J_k(q^*)$ satisfies \eqref{eq5.14}},\\
0,&\mbox{if $\ell+J_k(q^*)$ satisfies \eqref{eq5.15}}.
\end{array}\right.
\end{equation}

The $R+2$ singularities $0$, $\{-\alpha\}$ and $\{r_ib-\alpha\}$, $i=1,\ldots,R$, of $T$ modulo~$1$ divide the unit torus/circle $[0,1)$ into $R+2$ disjoint long critical intervals.

The next lemma follows from combining Lemmas \ref{lem52} and~\ref{lem53}.

\begin{lem}\label{lem54}
If $k\in\KKK_0=\KKK_0(\alpha;r_ib,i=1,\ldots,R)$ is sufficiently large, then for every short special interval $J_k(q)$, $1\le q\le q_{k+1}-2$, that is fully contained inside one of the $R+2$ long critical intervals, the intersection $J_k(q;s)\cap S_0$ defines an ordered $s$-tuple $\Theta(k,q)$ that is either equal to $\Theta(k,q^*)$ or has the entries permuted.
\end{lem}

\begin{proof}
Suppose that $J_k(q)$ is fully contained inside one of the $R+2$ long critical intervals with endpoints $z_1<z_2$ which are two adjacent singularities of $T$ modulo~$1$.
Suppose first that $z_1,z_2\not\in\{0,\{-\alpha\}\}$, where $z_2=0$ denotes the endpoint $1=0$.
Then $z_1=\{r_{i_1}b\}$ and $z_2=\{r_{i_2}b\}$ for some $i_1\ne i_2$ satisfying $1\le i_1,i_2\le R$, and there exist $\sigma_1,\sigma_2\in\{\pm\}$ such that
\begin{displaymath}
z_1<\{h_k(i_1;\sigma_1)\alpha\}<\{h_k(i_2;\sigma_2)\alpha\}<z_2.
\end{displaymath}
It follows from \eqref{eq5.13} that the finite sequence of consecutive integers with integer endpoints $h_k(i_1;\sigma_1)$ and $h_k(i_2;\sigma_2)$ has at least $\delta q_{k+1}$ terms and contains the integer~$q$.
If $z_1$ or $z_2$ belongs to $\{0,\{-\alpha\}\}$, then a modification of the argument, using \eqref{eq5.12} as well as \eqref{eq5.13}, will also lead to a sequence of consecutive integers with at least $\delta q_{k+1}$ terms and which contains the integer~$q$.
It then follows from Lemma~\ref{lem53} that this finite sequence of consecutive integers also contains an integer $q^*$ such that an ordered $s$-tuple
$\Theta(k,q^*)$ of the form \eqref{eq5.17} exists and satisfies \eqref{eq5.18} for every $\ell=0,1,\ldots,s-1$.
Note next that
\begin{displaymath}
J_k(q;s)=T^{q-q^*}J_k(q^*;s).
\end{displaymath}
Note that $S_0\subset[0,s)$ is $T$-invariant, and the $R+2$ singularities never split the intervals in the process of iterated applications of the transformation~$T$.
The desired conclusion follows immediately.
\end{proof}

Our next step is to take advantage of property (iii) earlier concerning substantial overlappings of the intervals $J_k(q)$.

Recall that $R+2$ division points $0$, $\{-\alpha\}$ and $\{r_ib-\alpha\}$, $i=1,\ldots,R$, of the torus/circle $[0,1)$ give rise to $R+2$ long critical intervals in the torus/circle $[0,1)$.
They lead naturally to $s(R+2)$ division points and $s(R+2)$ long critical intervals in $[0,s)$.

Consider the $s(q_{k+1}-2)$ short special intervals
\begin{displaymath}
\ell+J_k(q),
\quad
\ell=0,1,\ldots,s-1,
\quad
1\le q\le q_{k+1}-2.
\end{displaymath}
For large values of~$k$, the overwhelming majority of these short special intervals are fully contained in some long critical interval in $[0,s)$, and give rise to $s(R+2)$ collections of substantially overlapping intervals in $[0,s)$.
These $s(R+2)$ collections essentially cover the $s(R+2)$ disjoint long critical intervals; more precisely, each long critical interval has an extremely short subinterval at each end which may not be covered.
Due to the substantial overlappings, neighboring short special intervals in the same collection must have identical ordered $s$-tuples $\Theta(k,q)$.

It follows that the short special intervals fully contained within any given long critical interval $\III\subset[0,s)$ must either all satisfy \eqref{eq5.14} or all satisfy \eqref{eq5.15}.
This means that the given long critical interval $\III$ is essentially $\eps$-almost entirely in~$S_0$, or is  essentially $\eps$-almost disjoint from~$S_0$. 

Let the set $S_0^*\subset[0,s)$ be defined as follows, apart from the $s(R+2)$ division points that give rise to the long critical intervals.
For every long critical interval $\III\subset[0,s)$, we set
\begin{displaymath}
\III\subset S_0^*
\quad\mbox{if and only if}\quad
\mbox{$\III$ is essentially $\eps$-almost entirely in $S_0$}.
\end{displaymath}
Then each of the long critical intervals in $[0,s)$ is either entirely contained in $S_0^*$ or disjoint from~$S_0^*$.
It then remains to prove that such a set $S_0^*$ cannot exist.

The two sets $S_0^*$ and $[0,s)\setminus S_0^*$ lead naturally to a $2$-coloring of the interval $[0,s)$, which in turn lead to a $2$-coloring of the $s$ vertical edges of the underlying finite polysquare-$b$-rational translation surface~$\PPP$.
We can then use the $\alpha$-flow to extend this $2$-coloring to the whole of~$\PPP$.

It is clear that $\PPP$ cannot be monochromatic, for otherwise $\meas(S_0^*)$ must be equal to $0$ or~$s$, implying that $\meas(S_0)$ is close to $0$ or~$s$, contradicting our earlier conclusion that $\meas(S_0)\in\{1,2,\ldots,s-1\}$.
Thus the $2$-coloring must have a color switch across some division point.

Suppose first of all that there is a color switch across some division point~$\{-\alpha\}$, as illustrated in the picture on the left in Figure~5.1.
Applying the reverse $\alpha$-flow takes $\{-\alpha\}$ to the point $\{-2\alpha\}$ on some vertical edge of~$\PPP$.
As $S_0$ is $T$-invariant, there must be a color switch across $\{-2\alpha\}$, a contradiction since this is not a division point.

\begin{displaymath}
\begin{array}{c}
\includegraphics{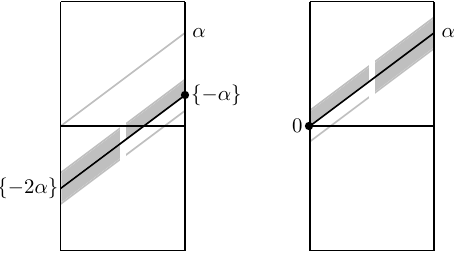}
\\
\mbox{Figure 5.1: contradicting a $2$-coloring}
\end{array}
\end{displaymath}

Suppose next that there is a color switch across some division point~$0$, as illustrated in the picture on the right in Figure~5.1.
Applying the $\alpha$-flow takes $0$ to the point $\{\alpha\}$ on some vertical edge of~$\PPP$.
As $S_0$ is $T$-invariant, there must be a color switch across $\{\alpha\}$, a contradiction since this is not a division point.

Suppose finally that the $2$-coloring has a color switch across a division point $\{r_{i_0}b-\alpha\}$, $1\le i_0\le R$, on some vertical edge of~$\PPP$.
The $\alpha$-flow moves this point to a new point on some vertical edge of~$\PPP$.
As $S_0$ is $T$-invariant, there must be a color switch across this new point, so this new point must be a division point.
Repeating this argument sufficiently long, this process must visit some division point twice, meaning that there exist some positive integers $n_1<n_2$ such that
\begin{displaymath}
\{r_{i_0}b-\alpha+n_1\alpha\}=\{r_{i_0}b-\alpha+n_2\alpha\}.
\end{displaymath}
But this implies that $(n_2-n_1)\alpha$ is an integer, contradicting the assumption that $\alpha$ is irrational.

This completes the proof of Lemma~\ref{lem51}.
\end{proof}

%
%

\section{Extending ergodicity to unique ergodicity}\label{sec6}

Lemmas \ref{lem41} and~\ref{lem51} establish ergodicity of some Lebesgue measure preserving transformation~$T$.
This means that we can apply Birkhoff's well known pointwise ergodic theorem concerning measure preserving systems $(X,\AAA,\mu,T)$.
The triple $(X,\AAA,\mu)$ is a measure space, where $X$ is the underlying space, $\AAA$ is a $\sigma$-algebra of sets in~$X$, while $\mu$ is a
non-negative $\sigma$-additive measure on $X$ with $\mu(X)<\infty$, and $T:X\to X$ is a measure-preserving transformation, so that
$T^{-1}A\in\AAA$ and $\mu(T^{-1}A)=\mu(A)$ for every $A\in\AAA$.

Let $L^1(X,\AAA,\mu)$ denote the space of measurable and integrable functions in the measure space $(X,\AAA,\mu)$.
Then Birkhoff's pointwise ergodic theorem says that for every function $f\in L^1(X,\AAA,\mu)$, the limit
\begin{equation}\label{eq6.1}
\lim_{m\to\infty}\frac{1}{m}\sum_{j=0}^{m-1}f(T^jx)=f^*(x)
\end{equation}
exists for $\mu$-almost every $x\in X$, where $f^*\in L^1(X,\AAA,\mu)$ is a $T$-invariant measurable function satisfying the condition
\begin{displaymath}
\int_Xf\,\dd\mu=\int_Xf^*\,\dd\mu.
\end{displaymath}

A particularly important special case is when $T$ is \textit{ergodic}, when every measurable $T$-invariant set $A\in\AAA$ is \textit{trivial} in the precise sense that $\mu(A)=0$ or $\mu(A)=\mu(X)$.
This is equivalent to the assertion that every measurable $T$-invariant function is constant $\mu$-almost everywhere.

If $T$ is ergodic, then \eqref{eq6.1} simplifies to
\begin{equation}\label{eq6.2}
\lim_{m\to\infty}\frac{1}{m}\sum_{j=0}^{m-1}f(T^jx)=\int_Xf\,\dd\mu,
\end{equation}
and the right-hand side of \eqref{eq6.1} is the same constant for $\mu$-almost every $x\in X$.

The remarkable intuitive interpretation of \eqref{eq6.2} is that the \textit{time average} on the left hand side is equal to the \textit{space average} on the right hand side.

Unfortunately, Birkhoff's theorem does not say anything about the speed of convergence in \eqref{eq6.1} or \eqref{eq6.2}.

In the special case of Lemma~\ref{lem51}, we have $X=[0,s)$, formed from the $s$ vertical edges of the given finite polysquare-$b$-rational translation surface~$\PPP$, the measure $\mu$ is $1$-dimensional Lebesgue measure and $T=T_\alpha$ is the interval exchange transformation corresponding to the $\alpha$-flow on~$\PPP$.
Combining Lemma~\ref{lem51} with \eqref{eq6.2}, we immediately obtain that for \textit{almost every} starting point, a half-infinite $\alpha$-geodesic is uniformly distributed on the surface~$\PPP$, a weaker version of Theorem~\ref{thm32}.
Similarly, combining Lemma~\ref{lem41} with \eqref{eq6.2}, we obtain a corresponding weaker version of Theorem~\ref{thm21}.

To prove Theorem~\ref{thm32}, we need to extend \textit{almost every} starting point to \textit{every non-pathological} starting point that gives rise to a half-infinite $\alpha$-geodesic.
In other words, we need to establish the following result.

\begin{lem}\label{lem61}
Under the hypotheses of Lemma~\ref{lem51}, consider the measure-preserving interval exchange transformation $T=T_\alpha:X\to X$ of the polyrectangle-$b$-rational surface~$\PPP$, where $X=[0,s)$.
Then for every subinterval $J\subset X$ and for every non-pathological starting point $x\in X$, we have
\begin{displaymath}
\lim_{m\to\infty}\frac{1}{m}\sum_{j=0}^{m-1}\chi_J(T^jx)
=\frac{\lambda(J)}{s},
\end{displaymath}
where $\chi_J$ denotes the characteristic function of $J$ and $\lambda$ denotes $1$-dimensional Lebesgue measure.

Using the standard extension argument, this discrete result can be converted to the continuous version concerning the uniformity of $\alpha$-geodesics  on $\PPP$ and every non-pathological starting point.
\end{lem}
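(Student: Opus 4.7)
The plan is to upgrade the ergodicity statement of Lemma~\ref{lem51} to \emph{unique ergodicity} of $T$, namely that the normalised Lebesgue measure $\lambda/s$ is the only $T$-invariant Borel probability measure on $X=[0,s)$. Once unique ergodicity is in hand, the extension from almost every to every non-pathological starting point is the standard weak-$*$ compactness argument. For any non-pathological $x\in X$, the empirical measures $\nu_m^x = m^{-1}\sum_{j=0}^{m-1}\delta_{T^j x}$ form a weak-$*$ precompact family of Borel probability measures on $[0,s]$; every weak-$*$ limit is $T$-invariant, hence equal to $\lambda/s$ by unique ergodicity. Thus $\nu_m^x$ converges weak-$*$ to $\lambda/s$ in full, so $m^{-1}\sum_{j=0}^{m-1}f(T^j x)\to s^{-1}\int_X f\,\dd\lambda$ for every continuous $f$. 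Approximating $\chi_J$ from above and below by continuous functions $f_1\le\chi_J\le f_2$ with $\int(f_2-f_1)\,\dd\lambda<\eps$ and applying this convergence to $f_1,f_2$ then squeezes the Birkhoff average of $\chi_J$ to $\lambda(J)/s$.

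To prove unique ergodicity I would adapt the argument of the proof of Lemma~\ref{lem51} by replacing the hypothesised $T$-invariant measurable set $S_0$ with an arbitrary $T$-invariant Borel probability measure $\mu$. The two main technical inputs, the $2$-distance theorem (Lemma~\ref{lem42}) and the separation lemma (Lemma~\ref{lem52}), are measure-free and remain available verbatim. The short special intervals $J_k(q)$, $1\le q\le q_{k+1}-2$, all have common Lebesgue length $d^\ast+d^{\ast\ast}$, avoid every singularity of $T$ modulo~$1$, and substantially overlap their neighbours. For indices $q,q'$ lying inside a common long critical interval $\III\subset[0,1)$ cut out by consecutive singularities from $\{0,\{-\alpha\},\{r_ib-\alpha\}:i=1,\ldots,R\}$, the separation lemma provides a range of intermediate iterates on which no singularity splits any $s$-copy extension, so that $T^{q-q'}$ maps $J_k(q';s)$ bijectively onto $J_k(q;s)$ by pure translation on each sheet, possibly followed by a sheet permutation. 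The $T$-invariance of $\mu$ then gives $\mu(J_k(q;s))=\mu(J_k(q';s))$. Since all such $J_k(q)$ within $\III$ have equal $\mu$-mass, equal Lebesgue length, and overlaps of length at least $\Vert q_k\alpha\Vert$, letting $k\to\infty$ and using the shrinking of $d^\ast+d^{\ast\ast}$ to zero forces $\mu|_\III$ to be a constant multiple $c_\III\cdot\lambda|_\III$ of Lebesgue. The cyclic-periodicity and edge-propagation arguments from the closing part of the proof of Lemma~\ref{lem51}, applied to the density $c_\III$ in place of the $2$-colouring, then show that $c_\III$ cannot jump across any singularity of $T$, since each of the three types of singularity was ruled out there by following its forward $T$-orbit to a non-singular point. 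Hence $c_\III$ is a single constant $c$ throughout $[0,s)$, and normalisation forces $c=1/s$.

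The step I expect to be the main obstacle is the passage from the equal-mass statement $\mu(J_k(q;s))=\mu(J_k(q';s))$ to local uniformity of $\mu$ on each long critical interval~$\III$. The equality holds for the \emph{total} mass of the $s$-copy extension, not sheet-by-sheet, so one must track how $\mu$ distributes among the $s$ sheets. The clean way to do this is to associate to each $q$ a vector $\Theta_\mu(k,q)\in\Rr^s$ whose $\ell$-th entry records the $\mu$-mass of the short interval $\ell+J_k(q)$, for $\ell=0,\ldots,s-1$. Lemma~\ref{lem52} shows that as $q$ varies within $\III$, the vector $\Theta_\mu(k,q)$ is transported by a single permutation of sheets, while substantial overlap forces any two such vectors arising from neighbouring $q,q'$ to coincide entry by entry. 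This yields per-sheet local uniformity and hence $\mu|_\III=c_\III\lambda|_\III$, completing the adaptation from sets to measures.
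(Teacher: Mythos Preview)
Your reduction to unique ergodicity and the subsequent weak-$*$ argument are correct and standard. The gap is exactly in the step you yourself flag: from the permutation relation $\Theta_\mu(k,q)=P_\sigma\Theta_\mu(k,q')$ together with substantial \emph{Lebesgue} overlap of $J_k(q)$ and $J_k(q')$, you assert that the vectors agree entry by entry. In the proof of Lemma~\ref{lem51} this works because the entries $\theta_\ell(k,q)$ take only the values $0$ or $1$ (this dichotomy is the whole point of Lemma~\ref{lem53}), and two short intervals sharing at least a third of their Lebesgue length cannot carry opposite $0/1$ labels when $\eps$ is small. For a general $T$-invariant probability measure $\mu$ there is no such dichotomy: substantial Lebesgue overlap of $\ell+J_k(q)$ and $\ell+J_k(q')$ gives no bound on $\vert\mu(\ell+J_k(q))-\mu(\ell+J_k(q'))\vert$, since $\mu$ may be singular with respect to $\lambda$ and place all its mass on the non-overlapping portions. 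The constraint that $\mu$ projects to Lebesgue modulo~$1$ only bounds the \emph{sum} $\sum_\ell\mu(\ell+A)=\lambda(A)$, not the individual sheets, so even combined with the permutation relation it does not force $\Theta_\mu(k,q)=\Theta_\mu(k,q')$. The adaptation from sets to measures breaks down precisely here.

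The paper takes a different route that avoids this obstacle. Assuming equidistribution fails at some $(y_0,J_0)$, it constructs (Lemma~\ref{lem62}) an ergodic $T$-invariant Borel probability measure $\nu\ne\lambda/s$ as a weak-$*$ limit of empirical measures along orbit segments whose lengths are denominators $q_{d_m}$ of convergents of~$\alpha$; the Ostrowski decomposition of an arbitrary orbit length into such blocks is what produces this special subsequence, and Krein--Milman furnishes an ergodic extremal point. Applying Birkhoff's theorem to $(X,\BBB,\nu,T)$ and comparing with the already-established $\lambda$-a.e.\ equidistribution gives a set $Y$ with $\nu(Y)=1$ and $\lambda(Y)=0$, so $\nu(Q)/\lambda(Q)$ is unbounded over short intervals~$Q$. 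The contradiction then comes from the arithmetic choice of orbit length: since $T$ modulo~$1$ is rotation by~$\alpha$, an orbit segment of length $q_{d_m}$ is, modulo~$1$, within $1/q_{d_m}$ of an equipartition of $[0,1)$, so the empirical measures $\nu_m$ are uniformly not crowded in the sense of \eqref{eq6.27}, and hence neither is their weak-$*$ limit~$\nu$. This density bound, coming from the continued-fraction structure rather than from any overlap argument, is the ingredient your proposal is missing.
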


\begin{proof}
The proof is by contradiction, and consists of two parts.
The first part simply follows Furstenberg's argument, and the basic idea is to reformulate the problem of unique ergodicity in terms of $T$-invariant Borel measures, and to apply nontrivial results from functional analysis. 
The key idea of the second part is then an application of Birkhoff's ergodic theorem to a new measure that is different from~$\lambda$.

The first part of the argument is summarized in the following lemma.

\begin{lem}\label{lem62}
Suppose that there exist a non-pathological starting point $y_0\in X$ and an interval $J_0\subset X$ for which uniformity fails, so that the infinite sequence
\begin{equation}\label{eq6.3}
\frac{1}{m}\sum_{j=0}^{m-1}\chi_{J_0}(T^jy_0),
\quad
m\ge1,
\end{equation}
where $\chi_{J_0}$ is the characteristic function of~$J_0$, does not converge to $\lambda(J_0)/s$.
Then there exists an ergodic measure-preserving system $(X,\BBB,\nu,T)$, where $\BBB$ is the Borel $\sigma$-algebra on~$X$, and
$\nu$ is a new $T$-invariant Borel probability measure, such that
\begin{equation}\label{eq6.4}
\nu(J_0)\ne\frac{\lambda(J_0)}{\lambda(X)}=\frac{\lambda(J_0)}{s}.
\end{equation}
\end{lem}

\begin{proof}
In view of the assumption, there exists an infinite subsequence
\begin{equation}\label{eq6.5}
0\le h_0< h_1<h_2<h_3<\ldots
\end{equation}
of the non-negative integers such that the limit
\begin{equation}\label{eq6.6}
\lim_{m\to\infty}\frac{1}{h_m}\sum_{j=0}^{h_m-1}\chi_{J_0}(T^jy_0)
\end{equation}
exists, but is not equal to $\lambda(J_0)/s$.

\begin{claim}
There exist another infinite subsequence
\begin{displaymath}
1\le d_1<d_2<d_3<\ldots
\end{displaymath}
of the positive integers and an infinite sequence of corresponding starting points $y(m)\in X$, $m=1,2,3,\ldots,$ such that the limit
\begin{equation}\label{eq6.7}
\lim_{m\to\infty}\frac{1}{q_{d_m}}\sum_{j=0}^{q_{d_m}-1}\chi_{J_0}(T^jy(m))
\end{equation}
exists, but is not equal to $\lambda(J_0)/s$.
Here, for every integer $m=1,2,3,\ldots,$ the number $q_{d_m}$ represents the denominator of the $d_m$-th convergent of~$\alpha$.
\end{claim}

We shall justify this Claim at the end of our proof of Lemma~\ref{lem62}.

We now repeat and adapt some ideas in \cite[Sections 3.2--3.3]{F}.
For every integer $m\ge1$, we introduce the normalized counting measure~$\nu_m$, defined for every Borel set $B\subset X$ by
\begin{equation}\label{eq6.8}
\nu_m(B)=\frac{1}{q_{d_m}}\sum_{j=0}^{q_{d_m}-1}\chi_B(T^{j}y(m)),
\end{equation}
where $\chi_B$ is the characteristic function of~$B$.

Now we make use of a general theorem in functional analysis which says that the space of Borel probability measures on any compact set is compact in the so-called \textit{weak-star topology}. 
The latter means that
\begin{displaymath}
\mu_m\to\mu
\quad\mbox{if and only if}\quad
\int f\,\dd\mu_m\to\int f\,\dd\mu,
\end{displaymath}
where $f$ runs over all continuous functions on the compact space.

This compactness theorem is a non-trivial result.
The standard proof is based on the Riesz Representation Theorem.

Let $\MMM$ denote the set of Borel probability measures $\mu$ on~$X$.
By the general theorem, $\MMM$ is compact.
Let $\MMM_1\subset\MMM$ denote the set of those Borel probability measures $\mu$ on $X$ that are $T$-invariant and such that
$\mu\ne\lambda/s$.
It is obvious that $\MMM_1$ is a closed subset of~$\MMM$ and therefore compact. 

The compactness of $\MMM$ implies that there is a subsequence $\nu_{m_i}$ of the sequence $\nu_m$ defined by \eqref{eq6.8} such that
$\nu_{m_i}\to\nu_{\infty}$ as $i\to\infty$, where $\nu_{\infty}$ is a Borel probability measure on~$X$.
It easily follows from \eqref{eq6.8} that $\nu_{\infty}$ is $T$-invariant.
Indeed, writing $y_1(m)=Ty(m)$, we have
\begin{align}
\nu_m(T^{-1}B)
&
=\frac{1}{q_{d_m}}\sum_{j=0}^{q_{d_m}-1}\chi_B(T^{j}y_1(m))
=\frac{1}{q_{d_m}}\sum_{j=1}^{q_{d_m}}\chi_B(T^{j}y(m))
\nonumber
\\
&
=\frac{1}{q_{d_m}}\sum_{j=0}^{q_{d_m}-1}\chi_B(T^{j}y(m))
+\frac{\chi_B(T^{q_{d_m}}y(m))-\chi_B(y(m))}{q_{d_m}}
\nonumber
\\
&
=\nu_m(B)
+\frac{\chi_B(T^{q_{d_m}}y(m))-\chi_B(y(m))}{q_{d_m}},
\nonumber
\end{align}
and
\begin{displaymath}
\left\vert\frac{\chi_B(T^{q_{d_m}}y(m))-\chi_B(y(m))}{q_{d_m}}\right\vert\le\frac{1}{q_{d_m}}\to0
\quad\mbox{as $m\to\infty$}.
\end{displaymath}
Moreover, the limit measure $\nu_{\infty}$ clearly satisfies the requirement \eqref{eq6.4}, implying that $\nu_{\infty}\in\MMM_1$, and so $\MMM_1$ is a non-empty compact set.

To find an appropriate $\nu\in\MMM_1$ which guarantees that the measure-preserving system $(X,\BBB,\nu,T)$ is ergodic, we use the almost trivial fact that $\MMM_1$ is convex.
The well known Krein--Milman theorem in functional analysis implies that the non-empty convex set $\MMM_1$ is spanned by its \textit{extremal points}.
It is a well known general result in ergodic theory that the extremal points are precisely the ergodic $T$-invariant measures; see
\cite[Proposition~3.4]{F}.
Thus we can choose our measure $\nu\in\MMM_1$ to be such an extremal point, and this completes the deduction of the lemma.
It remains to establish the Claim.

To establish the Claim, we use the $\alpha$-representation, or Ostrowski representation, of an arbitrary integer $N\ge1$.

It is well known that every integer $N\ge1$ has a unique representation in the form
\begin{displaymath}
N=\sum_{k=0}^nb_kq_k=\sum_{k=0}^nb_k(N)q_k,
\end{displaymath}
where the integer coefficients $b_0,\ldots,b_n$ satisfy the conditions
\begin{equation}\label{eq6.9}
0\le b_0<a_1,
\quad
0<b_n\le a_{n+1},
\quad
0\le b_k\le a_{k+1},
\quad
k=1,\ldots,n-1,
\end{equation}
as well as the restrictions
\begin{equation}\label{eq6.10}
b_{k-1}=0
\quad\mbox{if}\quad
b_k=a_{k+1},
\quad
k=1,\ldots,n,
\end{equation}
where $a_1,a_2,a_3,\ldots$ are digits of the continued fraction \eqref{eq5.2} of~$\alpha$, and $q_0,q_1,q_2,\ldots$ are the denominators of the successive convergents of~$\alpha$.
Furthermore, the value of the integer $n$ is determined by the inequalities $q_n\le N<q_{n+1}$.

We now do likewise for the sequence of integers $h_\ell$, $\ell=0,1,2,3,\ldots,$ in \eqref{eq6.5}, so that we have the Ostrowski representations
\begin{displaymath}
h_\ell=\sum_{k=0}^{n_\ell}b_{k,\ell}q_k,
\quad
\ell=0,1,2,3,\ldots,
\end{displaymath}
where the coefficients $b_{k,\ell}$ satisfy conditions analogous to \eqref{eq6.9} and \eqref{eq6.10}.

Next, observe that
\begin{equation}\label{eq6.11}
\{0,1,\ldots,h_\ell-1\}
=\bigcup_{k_0=0}^{n_\ell-1}\left\{\sum_{k=0}^{k_0-1}b_{k,\ell}q_k,\ldots,\sum_{k=0}^{k_0}b_{k,\ell}q_k-1\right\},
\end{equation}
where each set in the union is a collection of consecutive integers.
Write
\begin{equation}\label{eq6.12}
N_\ell(k_0)=\sum_{k=0}^{k_0-1}b_{k,\ell}q_k,
\quad
k_0=0,1,\ldots,n_\ell-1.
\end{equation}
Then for each $k_0=0,1,\ldots,n_\ell-1$, we have
\begin{equation}\label{eq6.13}
\left\{\sum_{k=0}^{k_0-1}b_{k,\ell}q_k,\ldots,\sum_{k=0}^{k_0}b_{k,\ell}q_k-1\right\}
=\bigcup_{b=0}^{b_{k_0,\ell}-1}\bigcup_{j_0=0}^{q_{k_0}-1}\{N_\ell(k_0)+bq_{k_0}+j_0\}.
\end{equation}
It now follows from \eqref{eq6.11}--\eqref{eq6.13} that
\begin{equation}\label{eq6.14}
\sum_{j=0}^{h_\ell-1}\chi_{J_0}(T^jy_0)
=\sum_{k_0=0}^{n_\ell-1}\sum_{b=0}^{b_{k_0,\ell}-1}\sum_{j_0=0}^{q_{k_0}-1}\chi_{J_0}(T^{N_\ell(k_0)+bq_{k_0}+j_0}y_0).
\end{equation}
Write $y_\ell(k_0,b)=T^{N_\ell(k_0)+bq_{k_0}}y_0$.
Replacing the dummy variables $k_0$ and $j_0$ by $k$ and $j$ respectively in \eqref{eq6.14}, we then conclude that
\begin{equation}\label{eq6.15}
\sum_{j=0}^{h_\ell-1}\chi_{J_0}(T^jy_0)
=\sum_{k=0}^{n_\ell-1}\sum_{b=0}^{b_{k,\ell}-1}\sum_{j=0}^{q_k-1}\chi_{J_0}(T^jy_\ell(k,b)).
\end{equation}
Motivated by \eqref{eq6.15}, write
\begin{displaymath}
\eps(\ell;k;b)=\left\vert\frac{1}{q_k}\sum_{j=0}^{q_k-1}\chi_{J_0}(T^jy_\ell(k,b))-\frac{\lambda(J_0)}{s}\right\vert,
\end{displaymath}
and
\begin{displaymath}
\eps(k)=\sup_{\substack{{\ell\ge0}\\{0\le b<b_{k,\ell}}}}\eps(\ell;k;b).
\end{displaymath}
Noting that the limit \eqref{eq6.6} exists and is not equal to $\lambda(J_0)/s$, it is clear that $\eps(k)$ does not tend to zero as $k\to\infty$.
Hence there exists an infinite sequence
\begin{displaymath}
1\le d_1<d_2<d_3<\ldots
\end{displaymath}
of integers and a positive $\eps_0>0$ such that $\eps(d_k)\ge\eps_0$ for all $k\ge1$.
This clearly implies that the limit \eqref{eq6.7} exists, but is not equal to $\lambda(J_0)/s$.
This completes the proof of the Claim and also of Lemma~\ref{lem62}.
\end{proof}

Since the measure-preserving system $(X,\BBB,\nu,T)$ given by Lemma~\ref{lem62} is ergodic, it follows from Birkhoff's ergodic theorem that for every Borel set $B\in\BBB$ and for $\nu$-almost every $y\in X$, we have
\begin{equation}\label{eq6.16}
\lim_{m\to\infty}\frac{1}{m}\sum_{j=0}^{m-1}\chi_B(T^jy)=\nu(B).
\end{equation}

Let $W$ be an arbitrarily large but fixed positive integer.
We claim that there exists a non-empty open interval $Q=Q(W)\subset X$ such that
\begin{equation}\label{eq6.17}
\frac{\nu(Q)}{\lambda(Q)}>W.
\end{equation}
Thus the measure $\nu$ can be \textit{arbitrarily more dense} in some subintervals $Q\subset X$ than the Lebesgue measure~$\lambda$.  

To prove \eqref{eq6.17}, we choose $B=J_0$ in \eqref{eq6.3}, and consider the set
\begin{equation}\label{eq6.18}
Y=\left\{y\in X:\lim_{m\to\infty}\frac{1}{m}\sum_{j=0}^{m-1}\chi_{J_0}(T^jy)=\nu(J_0)\right\}.
\end{equation}
We already know that for $\lambda$-almost every $y\in X$, we have
\begin{equation}\label{eq6.19}
\lim_{m\to\infty}\frac{1}{m}\sum_{j=0}^{m-1}\chi_{J_0}(T^jy)
=\frac{\lambda(J_0)}{s}.
\end{equation}
Combining \eqref{eq6.4}, \eqref{eq6.16}, \eqref{eq6.18} and \eqref{eq6.19}, we conclude that
\begin{equation}\label{eq6.20}
\nu(Y)=1
\quad\mbox{and}\quad
\lambda(Y)=0.
\end{equation}
Let $\delta>0$ be arbitrarily small but fixed.
Since $\lambda(Y)=0$, there exists an infinite sequence $R_i$, $i\ge1$, of open intervals such that
\begin{equation}\label{eq6.21}
\sum_{i=1}^{\infty}\lambda(R_i)<\delta
\quad\mbox{and}\quad
Y\subset\bigcup_{i=1}^{\infty}R_i.
\end{equation}
By \eqref{eq6.20} and \eqref{eq6.21}, we have
\begin{equation}\label{eq6.22}
\sum_{i=1}^{\infty}\nu(R_i)\ge1.
\end{equation}
It follows from \eqref{eq6.21} and \eqref{eq6.22} that there exists an integer $i_0\ge1$ such that
\begin{equation}\label{eq6.23}
\frac{\lambda(R_{i_0})}{\nu(R_{i_0})}<\delta.
\end{equation}
Choosing $\delta=1/W$ in \eqref{eq6.23}, the inequality \eqref{eq6.17} follows with the choice $Q=R_{i_0}$.

Next we derive a contradiction from \eqref{eq6.8} and \eqref{eq6.17}.
In \eqref{eq6.8} the orbits
\begin{equation}\label{eq6.24}
\XXX=\{T^jy(m):0\le j<q_{d_m}\}
\end{equation}
of $\nu_m$ have sizes equal to the denominator of a convergent of~$\alpha$.
We shall show that they are \textit{uniformly not crowded}.
Then $\nu$, being a limit point of the set of counting measures~$\nu_m$, cannot be \textit{arbitrarily more dense} than~$\lambda$.
This then contradicts \eqref{eq6.17}.

To prove the sets $\XXX$ in \eqref{eq6.24} are \textit{uniformly not crowded}, we recall that $T$ modulo~$1$ is the $\alpha$-shift in the unit torus/circle
$[0,1)$.
We also recall from \eqref{eq4.36} that
\begin{displaymath}
\left\vert j\alpha-\frac{jp_{d_m}}{q_{d_m}}\right\vert\le\frac{j}{q_{d_m+1}q_{d_m}} \le\frac{1}{q_{d_m+1}}<\frac{1}{q_{d_m}},
\quad
0\le j<q_{d_m},
\end{displaymath}
which implies
\begin{equation}\label{eq6.25}
\left\vert\{j\alpha\}-\left\{\frac{jp_{d_m}}{q_{d_m}}\right\}\right\vert<\frac{1}{q_{d_m}},
\quad
0\le j<q_{d_m}.
\end{equation}
Since
\begin{equation}\label{eq6.26}
\left\{\frac{jp_{d_m}}{q_{d_m}}\right\},
\quad
0\le j<q_{d_m},
\end{equation}
gives an \textit{equipartition} of the unit torus/circle $[0,1)$, the points in \eqref{eq6.26} exhibit a best possible form of quantitative uniformity.
Combining this fact with \eqref{eq6.25}, we deduce that  for every subinterval $J\subset X=[0,s)$ with $\lambda(J)\ge s/q_{d_m}$, we have
\begin{equation}\label{eq6.27}
\vert J\cap\XXX\vert\le s\left(q_{d_m}\frac{\lambda(J)}{s}+2\right),
\end{equation}
where $\vert J\cap\XXX\vert$ denotes the number of elements of $\XXX$ in the interval $J$ of length $\lambda(J)$, and the factor $s$ comes from
the fact that there are $s$ atomic squares in~$\PPP$.
The bound \eqref{eq6.27} proves that the set $\XXX\subset X=[0,s)$ in \eqref{eq6.24} is \textit{uniformly not crowded}.
This completes the proof of Lemma~\ref{lem61}.
\end{proof}

The slope in Theorem~\ref{thm32} is badly approximable.
However, in this section this special property of $\alpha$ is never used, only that it is irrational.
Thus we can routinely repeat the same extension argument to Lemma~\ref{lem41} for the $2$-square-$b$ surface, and obtain the unique ergodicity for \textit{every} irrational slope and complete the proof of Theorem~\ref{thm21}. 

\begin{remark}
The expert reader may well be wondering why we have not established Theorem~\ref{thm32} by using Boshernitzan's Criterion for unique ergodicity of an interval exchange transformation as given in~\cite{V2}.
Unfortunately, we are not able to see how we may prove Theorem~\ref{thm21} via this method, and have already developed a different technique in Section~\ref{sec4}.
It therefore seems natural to adapt this other technique in Section~\ref{sec5} in the case of Theorem~\ref{thm32}.
\end{remark}

%
%

\section{proof of the separation lemma}\label{sec7}

The proof of Lemma~\ref{lem52} depends on the following very simple property of the given badly approximable number~$\alpha$.

\begin{lem}\label{lem71}
Let $\alpha\in(0,\infty)$ be badly approximable, with continued fraction \eqref{eq5.2} and digits satisfying \eqref{eq5.3}, where $A$ is a fixed positive integer.
Then for every integer $n\ge1$, we have
\begin{displaymath}
\Vert n\alpha\Vert>\frac{1}{(A+2)n}.
\end{displaymath}
\end{lem}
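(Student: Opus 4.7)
The plan is to deduce Lemma~\ref{lem71} as a direct consequence of the best-approximation property of the convergents $p_k/q_k$ of $\alpha$, combined with the bounded-digits hypothesis $a_1,a_2,\ldots\le A$. Note that $\Vert n\alpha\Vert$ is unaffected by the integer part of $\alpha$, so the value of $a_0$ plays no role, and the convergents to be used are the usual ones with $q_{-1}=0$, $q_0=1$.

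Given any integer $n\ge 1$, I would first choose the unique integer $k\ge 0$ for which $q_k\le n<q_{k+1}$. The best-approximation inequality \eqref{eq4.35} then yields $\Vert n\alpha\Vert\ge\Vert q_k\alpha\Vert$, while the lower bound in \eqref{eq4.36} gives $\Vert q_k\alpha\Vert\ge 1/(q_{k+1}+q_k)$. The lemma therefore reduces to the purely denominator-level estimate $q_{k+1}+q_k\le(A+2)n$.

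For the latter I would apply the recurrence \eqref{eq4.37}, namely $q_{k+1}=a_{k+1}q_k+q_{k-1}$, together with the hypothesis $a_{k+1}\le A$ and the trivial monotonicity $q_{k-1}\le q_k$ (with the convention $q_{-1}=0$ covering the base case $k=0$). This gives $q_{k+1}\le(A+1)q_k$, and hence $q_{k+1}+q_k\le(A+2)q_k\le(A+2)n$, as required. To promote the weak inequality to the strict inequality stated in the lemma, I would invoke the standard identity $q_k\alpha-p_k=(-1)^k/(\alpha_{k+1}q_k+q_{k-1})$ with $\alpha_{k+1}=[a_{k+1};a_{k+2},\ldots]$. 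Since the continued fraction of $\alpha$ is infinite, $\alpha_{k+1}<a_{k+1}+1$, so $\Vert q_k\alpha\Vert=1/(\alpha_{k+1}q_k+q_{k-1})>1/(q_{k+1}+q_k)$, which upgrades the chain of inequalities to a strict one.

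I do not anticipate any genuine obstacle here: the lemma is essentially a one-step corollary of the classical continued-fraction facts \eqref{eq4.35}--\eqref{eq4.37} already recorded earlier in the paper, and the only minor points that need attention are the handling of the base case $k=0$ (where $q_{-1}=0\le q_0=1$ makes the monotonicity trivial) and the justification of strictness via the exact formula for $q_k\alpha-p_k$.
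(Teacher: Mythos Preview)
Your proposal is correct and follows essentially the same route as the paper: choose $k$ with $q_k\le n<q_{k+1}$, use $\Vert n\alpha\Vert\ge\Vert q_k\alpha\Vert\ge 1/(q_{k+1}+q_k)$, and bound $q_{k+1}+q_k\le(A+2)q_k\le(A+2)n$ via the recurrence and the digit bound. The only cosmetic difference is the placement of the strict inequality: the paper obtains it from $q_{k+1}=a_{k+1}q_k+q_{k-1}<(a_{k+1}+1)q_k$, whereas you extract it (slightly more carefully) from the exact formula $\Vert q_k\alpha\Vert=1/(\alpha_{k+1}q_k+q_{k-1})$ with $\alpha_{k+1}<a_{k+1}+1$.
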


\begin{proof}
For every integer $n\ge1$, we can find an integer $k\ge0$ such that $q_k\le n<q_{k+1}$, where $q_k=q_k(\alpha)$ denotes the denominator of the $k$-th convergent of~$\alpha$.
Using well known diophantine approximation properties of continued fractions, we have
\begin{displaymath}
\Vert n\alpha\Vert
\ge\Vert q_k\alpha\Vert
\ge\frac{1}{q_k+q_{k+1}}
>\frac{1}{q_k+(a_{k+1}+1)q_k}
\ge\frac{1}{(A+2)q_k}
\ge\frac{1}{(A+2)n},
\end{displaymath}
as required.
\end{proof}

\begin{proof}[Proof of Lemma~\ref{lem52}]
Suppose that the integer $k_0\ge1$ violates \eqref{eq5.12} or \eqref{eq5.13}.
Then at least one of the conditions (V1)--(V3) holds:

(V1)
There exist $i=1,\ldots,R$ and $\sigma=\pm$ such that
\begin{equation}\label{eq7.1}
h_{k_0}(i;\sigma)\le\delta q_{k_0+1}.
\end{equation}
In this case, it follows from \eqref{eq5.10} that
\begin{equation}\label{eq7.2}
\Vert(h_{k_0}(i;\sigma)+1)\alpha-r_ib\Vert
=\Vert\{h_{k_0}(i;\sigma)\alpha\}-\{r_ib-\alpha\}\Vert
<\frac{2}{q_{k_0+1}}.
\end{equation}
Writing $r_i=u_i/v_i$ and multiplying by $v_i$ leads to the inequality
\begin{displaymath}
\Vert v_i(h_{k_0}(i;\sigma)+1)\alpha-u_ib\Vert
<\frac{2v_i}{q_{k_0+1}}.
\end{displaymath}
Thus there exists a real number $x_1>1$ such that
\begin{equation}\label{eq7.3}
\Vert v_i(h_{k_0}(i;\sigma)+1)\alpha-u_ib\Vert
=\frac{2v_i}{x_1q_{k_0+1}}.
\end{equation}

(V2)
There exist $i=1,\ldots,R$ and $\sigma=\pm$ such that
\begin{displaymath}
h_{k_0}(i;\sigma)\ge(1-\delta)q_{k_0+1}.
\end{displaymath}
In this case, write
\begin{displaymath}
h_{k_0}^-(i;\sigma)=q_{k_0+1}-h_{k_0}(i;\sigma).
\end{displaymath}
This is equivalent to
\begin{displaymath}
(-h_{k_0}^-(i;\sigma)\alpha-(r_ib-\alpha))
-(h_{k_0}(i;\sigma)\alpha-(r_ib-\alpha))
=q_{k_0+1}\alpha,
\end{displaymath}
and it follows from the triangle inequality and \eqref{eq4.36} that
\begin{displaymath}
\vert\Vert-h_{k_0}^-(i;\sigma)\alpha-(r_ib-\alpha)\Vert
-\Vert h_{k_0}(i;\sigma)\alpha-(r_ib-\alpha)\Vert\vert
\le\Vert q_{k_0+1}\alpha\Vert
\le\frac{1}{q_{k_0+2}},
\end{displaymath}
and then from \eqref{eq5.10} that
\begin{equation}\label{eq7.4}
\Vert(-h_{k_0}^-(i;\sigma)+1)\alpha-r_ib\Vert
\le\Vert(h_{k_0}(i;\sigma)+1)\alpha-r_ib\Vert
+\Vert q_{k_0+1}\alpha\Vert
<\frac{3}{q_{k_0+1}}.
\end{equation}
Writing $r_i=u_i/v_i$ and multiplying by $v_i$ leads to the inequality
\begin{displaymath}
\Vert v_i(-h_{k_0}^-(i;\sigma)+1)\alpha-u_ib\Vert
<\frac{3v_i}{q_{k_0+1}}.
\end{displaymath}
Thus there exists a real number $x_2>1$ such that
\begin{displaymath}
\Vert v_i(-h_{k_0}^-(i;\sigma)+1)\alpha-u_ib\Vert
=\frac{3v_i}{x_2q_{k_0+1}}.
\end{displaymath}
Note in particular that
\begin{displaymath}
2\le h_{k_0}^-(i;\sigma)\le\delta q_{k_0+1}.
\end{displaymath}

(V3)
There exist $i_1,i_2=1,\ldots,R$ and $\sigma_1,\sigma_2=\pm$ such that $(i_1,\sigma_1)\ne(i_2,\sigma_2)$ and
\begin{displaymath}
\vert h_{k_0}(i_1;\sigma_1)-h_{k_0}(i_2;\sigma_2)\vert\le\delta q_{k_0+1}.
\end{displaymath}
For notational simplicity, we assume that
\begin{displaymath}
h_{k_0}(i_1;\sigma_1)>h_{k_0}(i_2;\sigma_2)
\quad\mbox{and}\quad
\{r_{i_1}b-\alpha\}>\{r_{i_2}b-\alpha\}.
\end{displaymath}
The argument for the other possibilities requires only minor modification.
Then
\begin{displaymath}
\{r_{i_1}b-\alpha\}-\{r_{i_2}b-\alpha\}=\{(r_{i_1}-r_{i_2})b\},
\end{displaymath}
and it follows from the triangle inequality and \eqref{eq5.10} that
\begin{align}
&
\Vert(h_{k_0}(i_1;\sigma_1)-h_{k_0}(i_2;\sigma_2))\alpha-(r_{i_1}-r_{i_2})b\Vert
\nonumber
\\
&\quad
=\Vert(h_{k_0}(i_1;\sigma_1)-h_{k_0}(i_2;\sigma_2))\alpha-\{(r_{i_1}-r_{i_2})b\}\Vert
\nonumber
\\
&\quad
=\Vert(h_{k_0}(i_1;\sigma_1)-h_{k_0}(i_2;\sigma_2))\alpha-\{r_{i_1}b-\alpha\}+\{r_{i_2}b-\alpha\}\Vert
\nonumber
\\
&\quad
\le\Vert h_{k_0}(i_1;\sigma_1)\alpha-\{r_{i_1}b-\alpha\}\Vert
+\Vert h_{k_0}(i_2;\sigma_2)\alpha-\{r_{i_2}b-\alpha\}\Vert
<\frac{4}{q_{k_0+1}}.
\nonumber
\end{align}
Write
\begin{displaymath}
h_{k_0}(i_1,i_2;\sigma_1,\sigma_2)=h_{k_0}(i_1;\sigma_1)-h_{k_0}(i_2;\sigma_2).
\end{displaymath}
Then it clearly follows that
\begin{equation}\label{eq7.5}
\Vert h_{k_0}(i_1,i_2;\sigma_1,\sigma_2)\alpha-(r_{i_1}-r_{i_2})b\Vert<\frac{4}{q_{k_0+1}}.
\end{equation}
Writing $r_{i_1}=u_{i_1}/v_{i_1}$, $r_{i_2}=u_{i_2}/v_{i_2}$ and multiplying by $v_{i_1}v_{i_2}$ leads to the inequality
\begin{displaymath}
\Vert v_{i_1}v_{i_2}h_{k_0}(i_1,i_2;\sigma_1,\sigma_2)\alpha-(u_{i_1}v_{i_2}-u_{i_2}v_{i_1})b\Vert<\frac{4v_{i_1}v_{i_2}}{q_{k_0+1}}.
\end{displaymath}
Thus there exists a real number $x_3>1$ such that
\begin{displaymath}
\Vert v_{i_1}v_{i_2}h_{k_0}(i_1,i_2;\sigma_1,\sigma_2)\alpha-(u_{i_1}v_{i_2}-u_{i_2}v_{i_1})b\Vert=\frac{4v_{i_1}v_{i_2}}{x_3q_{k_0+1}}.
\end{displaymath}
Note in particular that
\begin{displaymath}
1\le h_{k_0}(i_1,i_2;\sigma_1,\sigma_2)\le\delta q_{k_0+1}.
\end{displaymath}

Next let $k>k_0$ be any integer which violates \eqref{eq5.12} or \eqref{eq5.13}.
We distinguish various cases.

\begin{case1a}
Suppose that $k_0$ satisfies (V1) and $k$ satisfies the $k$-analog of (V1).
Then there exist $i^*=1,\ldots,R$ and $\sigma^*=\pm$ such that
\begin{equation}\label{eq7.6}
h_k(i^*;\sigma^*)\le\delta q_{k+1}.
\end{equation}
Writing $r_{i^*}=u_{i^*}/v_{i^*}$ and multiplying the analog of \eqref{eq7.2} by $u_iv_{i^*}$ leads to the inequality
\begin{equation}\label{eq7.7}
\Vert u_iv_{i^*}(h_k(i^*;\sigma^*)+1)\alpha-u_iu_{i^*}b\Vert
<\frac{2\vert u_iv_{i^*}\vert}{q_{k+1}}.
\end{equation}
On the other hand, multiplying \eqref{eq7.3} by $u_{i^*}$, we obtain
\begin{equation}\label{eq7.8}
\Vert v_iu_{i^*}(h_{k_0}(i;\sigma)+1)\alpha-u_iu_{i^*}b\Vert
=\frac{2\vert v_iu_{i^*}\vert}{x_1q_{k_0+1}}.
\end{equation}
We shall show that the integer
\begin{equation}\label{eq7.9}
d_{11}=v_iu_{i^*}(h_{k_0}(i;\sigma)+1)-u_iv_{i^*}(h_k(i^*;\sigma^*)+1)\ne0
\end{equation}
if the condition
\begin{equation}\label{eq7.10}
\frac{UV}{q_{k+1}}\le\frac{1}{x_1q_{k_0+1}}
\end{equation}
holds.
Indeed, combining \eqref{eq5.9}, \eqref{eq7.7}, \eqref{eq7.8} and \eqref{eq7.10}, we see that
\begin{align}
&
\Vert u_iv_{i^*}(h_k(i^*;\sigma^*)+1)\alpha-u_iu_{i^*}b\Vert
<\frac{2UV}{q_{k+1}}
\le\frac{2}{x_1q_{k_0+1}}
\nonumber
\\
&\quad
\le\frac{2\vert v_iu_{i^*}\vert}{x_1q_{k_0+1}}
=\Vert v_iu_{i^*}(h_{k_0}(i;\sigma)+1)\alpha-u_iu_{i^*}b\Vert.
\nonumber
\end{align}
This clearly implies that $d_{11}\ne0$.
\end{case1a}

\begin{case1b}
Suppose that $k_0$ satisfies (V1) and $k$ satisfies the $k$-analog of (V2).
Then there exist $i^*=1,\ldots,R$ and $\sigma^*=\pm$ such that
\begin{displaymath}
h_k^-(i^*;\sigma^*)=q_{k+1}-h_k(i^*;\sigma^*)\le\delta q_{k+1}.
\end{displaymath}
Writing $r_{i^*}=u_{i^*}/v_{i^*}$ and multiplying the analog of \eqref{eq7.4} by $u_iv_{i^*}$ leads to the inequality
\begin{equation}\label{eq7.11}
\Vert u_iv_{i^*}(-h_k^-(i^*;\sigma^*)+1)\alpha-u_iu_{i^*}b\Vert
<\frac{3\vert u_iv_{i^*}\vert}{q_{k+1}}.
\end{equation}
On the other hand, multiplying \eqref{eq7.3} by $u_{i^*}$, we obtain \eqref{eq7.8}.
We shall show that the integer
\begin{displaymath}
d_{12}=v_iu_{i^*}(h_{k_0}(i;\sigma)+1)-u_iv_{i^*}(-h_k^-(i^*;\sigma^*)+1)\ne0
\end{displaymath}
if the condition
\begin{equation}\label{eq7.12}
\frac{3UV}{2q_{k+1}}\le\frac{1}{x_1q_{k_0+1}}
\end{equation}
holds.
Indeed, combining \eqref{eq5.9}, \eqref{eq7.8}, \eqref{eq7.11} and \eqref{eq7.12}, we see that
\begin{align}
&
\Vert u_iv_{i^*}(-h_k^-(i^*;\sigma^*)+1)\alpha-u_iu_{i^*}b\Vert
<\frac{3UV}{q_{k+1}}
\le\frac{2}{x_1q_{k_0+1}}
\nonumber
\\
&\quad
\le\frac{2\vert v_iu_{i^*}\vert}{x_1q_{k_0+1}}
=\Vert v_iu_{i^*}(h_{k_0}(i;\sigma)+1)\alpha-u_iu_{i^*}b\Vert.
\nonumber
\end{align}
This clearly implies that $d_{12}\ne0$.
\end{case1b}

\begin{case1c}
Suppose that $k_0$ satisfies (V1) and $k$ satisfies the $k$-analog of (V3).
Then there exist $i_1^*,i_2^*=1,\ldots,R$ and $\sigma_1^*,\sigma_2^*=\pm$ such that $(i_1^*,\sigma_1^*)\ne(i_2^*,\sigma_2^*)$ and
\begin{displaymath}
\vert h_k(i_1^*;\sigma_1^*)-h_k(i_2^*;\sigma_2^*)\vert\le\delta q_{k+1}.
\end{displaymath}
For notational simplicity, we assume that
\begin{displaymath}
h_k(i_1^*;\sigma_1^*)>h_k(i_2^*;\sigma_2^*)
\quad\mbox{and}\quad
\{r_{i_1^*}b-\alpha\}>\{r_{i_2^*}b-\alpha\}.
\end{displaymath}
The argument for the other possibilities requires only minor modification.

Writing $r_{i_1^*}=u_{i_1^*}/v_{i_1^*}$, $r_{i_2^*}=u_{i_2^*}/v_{i_2^*}$ and multiplying the analog of \eqref{eq7.5} by $u_iv_{i_1^*}v_{i_2^*}$ leads to the inequality
\begin{equation}\label{eq7.13}
\Vert u_iv_{i_1^*}v_{i_2^*}h_k(i_1^*,i_2^*;\sigma_1^*,\sigma_2^*)\alpha-u_i(u_{i_1^*}v_{i_2^*}-v_{i_1^*}u_{i_2^*})b\Vert
<\frac{4\vert u_iv_{i_1^*}v_{i_2^*}\vert}{q_{k+1}}.
\end{equation}
On the other hand, multiplying \eqref{eq7.3} by $u_{i_1^*}v_{i_2^*}-v_{i_1^*}u_{i_2^*}$, we obtain
\begin{align}\label{eq7.14}
&
\Vert v_i(u_{i_1^*}v_{i_2^*}-v_{i_1^*}u_{i_2^*})(h_{k_0}(i;\sigma)+1)\alpha-u_i(u_{i_1^*}v_{i_2^*}-v_{i_1^*}u_{i_2^*})b\Vert
\nonumber
\\
&\quad
=\frac{2\vert v_i(u_{i_1^*}v_{i_2^*}-v_{i_1^*}u_{i_2^*})\vert}{x_1q_{k_0+1}}.
\end{align}
We shall show that the integer
\begin{displaymath}
d_{13}=v_i(u_{i_1^*}v_{i_2^*}-v_{i_1^*}u_{i_2^*})(h_{k_0}(i;\sigma)+1)-u_iv_{i_1^*}v_{i_2^*}h_k(i_1^*,i_2^*;\sigma_1^*,\sigma_2^*)\ne0
\end{displaymath}
if the condition
\begin{equation}\label{eq7.15}
\frac{2UV^2}{q_{k+1}}\le\frac{1}{x_1q_{k_0+1}}
\end{equation}
holds.
Indeed, combining \eqref{eq5.9}, \eqref{eq7.13}, \eqref{eq7.14} and \eqref{eq7.15}, we see that
\begin{align}
&
\Vert u_iv_{i_1^*}v_{i_2^*}h_k(i_1^*,i_2^*;\sigma_1^*,\sigma_2^*)\alpha-u_i(u_{i_1^*}v_{i_2^*}-v_{i_1^*}u_{i_2^*})b\Vert
\nonumber
\\
&\quad
<\frac{4UV^2}{q_{k+1}}
\le\frac{2}{x_1q_{k_0+1}}
\le\frac{2\vert v_i(u_{i_1^*}v_{i_2^*}-v_{i_1^*}u_{i_2^*})\vert}{x_1q_{k_0+1}}.
\nonumber
\\
&\quad
=\Vert v_i(u_{i_1^*}v_{i_2^*}-v_{i_1^*}u_{i_2^*})(h_{k_0}(i;\sigma)+1)\alpha-u_i(u_{i_1^*}v_{i_2^*}-v_{i_1^*}u_{i_2^*})b\Vert.
\nonumber
\end{align}
This clearly implies that $d_{13}\ne0$.
\end{case1c}

Let us compare the requirements \eqref{eq7.10}, \eqref{eq7.12} and \eqref{eq7.15}.
Clearly the last one is the strongest requirement.
Let $k_1$ be the smallest integer such that the inequality
\begin{displaymath}
\frac{2UV^2}{q_{k_1+1}}\le\frac{1}{x_1q_{k_0+1}}
\end{displaymath}
holds, so that in particular, we have
\begin{displaymath}
q_{k_1+1}\ge2UV^2x_1q_{k_0+1}.
\end{displaymath}
Note that $q_{k_1+1}$ is the denominator of a convergent of the continued fraction of the badly approximable number~$\alpha$, and clearly $k_1>k_0$.
Recall \eqref{eq5.3} that the continued fraction digits are bounded by an integer~$A$.
Using the recurrence relations \eqref{eq4.37}, we see that for every real number $X\ge1$, there exists a denominator $q_k$ between $X$ and $(1+A)X$.
The minimality property of $k_1$ then ensures that
\begin{equation}\label{eq7.16}
q_{k_1+1}\le2(A+1)UV^2x_1q_{k_0+1}.
\end{equation}
We shall show that this $k_1>k_0$ belongs to~$\KKK_0$, and prove this by contradiction.

Suppose on the contrary that $k_1\not\in\KKK_0$.
Then one of the cases 1A, 1B and 1C holds with $k=k_1$.

Suppose that Case~1A holds with $k=k_1$.
Starting with \eqref{eq7.7} with $k=k_1$, \eqref{eq7.8} and \eqref{eq7.9}, applying the triangle inequality, and then using \eqref{eq7.16}, we obtain
\begin{align}\label{eq7.17}
\Vert d_{11}\alpha\Vert
&
=\Vert v_iu_{i^*}(h_{k_0}(i;\sigma)+1)\alpha-u_iv_{i^*}(h_{k_1}(i^*;\sigma^*)+1)\alpha\Vert
\nonumber
\\
&
\le\Vert v_iu_{i^*}(h_{k_0}(i;\sigma)+1)\alpha-u_iu_{i^*}b\Vert
+\Vert u_iv_{i^*}(h_{k_1}(i^*;\sigma^*)+1)\alpha-u_iu_{i^*}b\Vert
\nonumber
\\
&
<\frac{2\vert v_iu_{i^*}\vert}{x_1q_{k_0+1}}+\frac{2\vert u_iv_{i^*}\vert}{q_{k_1+1}}
\le\frac{4(A+1)UV^2\vert v_iu_{i^*}\vert}{q_{k_1+1}}+\frac{2\vert u_iv_{i^*}\vert}{q_{k_1+1}}
\nonumber
\\
&
\le\frac{4(A+2)U^2V^3}{q_{k_1+1}}.
\end{align}
Let $n=\vert d_{11}\vert$ with $k=k_1$.
Then $n\ge1$.
Using \eqref{eq7.1} and \eqref{eq7.6} with $k=k_1$, we have
\begin{align}\label{eq7.18}
n
&
=\vert v_iu_{i^*}(h_{k_0}(i;\sigma)+1)-u_iv_{i^*}(h_{k_1}(i^*;\sigma^*)+1)\vert
\nonumber
\\
&
\le2\delta UVq_{k_0+1}+2\delta UVq_{k_1+1}
<4\delta UVq_{k_1+1}.
\end{align}
Applying Lemma~\ref{lem71} and using \eqref{eq7.18}, we deduce that
\begin{equation}\label{eq7.19}
\Vert n\alpha\Vert
>\frac{1}{(A+2)n}
>\frac{1}{4\delta(A+2)UVq_{k_1+1}}.
\end{equation}
Combining \eqref{eq7.17} and \eqref{eq7.19}, and noting that $\Vert n\alpha\Vert=\Vert d_{11}\alpha\Vert$, we conclude that
\begin{equation}\label{eq7.20}
\delta>\frac{1}{16(A+2)^2U^3V^4}.
\end{equation}
Clearly \eqref{eq7.20} contradicts the definition of $\delta$ as given by \eqref{eq5.11}.
It then follows that Case~1A does not hold with $k=k_1$.

Essentially similar arguments show that Case~1B and Case~1C also do not hold with $k=k_1$.

It follows that if $k_0$ satisfies (V1), then there exists $k_1>k_0$ such that $k_1\in\KKK_0$.
Similar arguments then show that if $k_0$ satisfies (V2) or (V3), then there exists $k_1>k_0$ such that $k_1\in\KKK_0$.
This clearly implies that the set $\KKK_0$ is infinite, and completes the proof of Lemma~\ref{lem52}.
\end{proof}

%
%

\section{Proving time-quantitative anti-uniformity}\label{sec8}

Our goal in this section is to establish quite serious violations of uniformity.
More precisely, we establish Theorem~\ref{thm34} which concerns half-infinite $\alpha$-geodesics that start from explicitly given points on the
$2$-square-$b$ surface.

The proof is based on a rather complicated parity formula for certain counting number of the irrational rotation sequence.
To describe this, we need the concept of continued fractions as well as the concept of $\alpha$-representations, or Ostrowski representations, both introduced earlier in this paper.

The rudiments of continued fractions are given in the proof of Lemma~\ref{lem41}, so we give here only a brief summary of what we need.

The irrational slope $\alpha\in(0,1)$ has an infinite continued fraction expansion
\begin{equation}\label{eq8.1}
\alpha=[a_1,a_2,a_3,\ldots]=\frac{1}{a_1+\frac{1}{a_2+\frac{1}{a_3+\cdots}}},
\end{equation}
where $a_i\ge1$, $i=1,2,3,\ldots,$ are integers.
The rational numbers
\begin{displaymath}
\frac{p_k}{q_k}=\frac{p_k(\alpha)}{q_k(\alpha)}=[a_1,\ldots,a_k],
\quad
k=1,2,3,\ldots,
\end{displaymath}
where $p_k\in\Zz$ and $q_k\in\Nn$ are coprime, are the $k$-convergents of~$\alpha$.
Write also
\begin{displaymath}
\eta_k=q_k\alpha-p_k,
\quad
k=1,2,3,\ldots.
\end{displaymath}
We have the recurrence relations
\begin{equation}\label{eq8.2}
p_{k+1}=a_{k+1}p_k+p_{k-1},
\ q_{k+1}=a_{k+1}q_k+q_{k-1},
\ \eta_{k+1}=a_{k+1}\eta_k+\eta_{k-1},
\ k\ge1,
\end{equation}
with initial conditions
\begin{equation}\label{eq8.3}
p_0=0,
\quad
q_0=1,
\quad
\eta_0=\alpha,
\quad
p_1=1,
\quad
q_1=a_1,
\quad
\eta_1=a_1\alpha -1.
\end{equation}
It is well known that the $k$-convergents satisfy \eqref{eq4.34} and give rise to the best rational approximations of~$\alpha$, and
\begin{equation}\label{eq8.4}
\eta_k=(-1)^k\vert\eta_k\vert\left\{\begin{array}{ll}
>0,&\mbox{if $k$ is even},\\
<0,&\mbox{if $k$ is odd}.
\end{array}\right.
\end{equation}
We also have the crucial diophantine approximation property
\begin{equation}\label{eq8.5}
\frac{1}{q_{k+1}+q_k}\le\vert\eta_k\vert=\vert q_k\alpha-p_k\vert=\Vert q_k\alpha\Vert\le\frac{1}{q_{k+1}},
\end{equation}
where $\Vert y\Vert$ denotes the distance of a real number $y$ from the nearest integer.

The concept of $\alpha$-representations, or Ostrowski representations, is first introduced in Section~\ref{sec6}.
Every integer $N\ge1$ has a unique representation in the form
\begin{equation}\label{eq8.6}
N=\sum_{i=0}^kb_iq_i=\sum_{i=0}^kb_i(N)q_i,
\end{equation}
where the integer coefficients $b_0,\ldots,b_k$ satisfy the conditions
\begin{equation}\label{eq8.7}
0\le b_0<a_1,
\quad
0<b_k\le a_{k+1},
\quad
0\le b_i\le a_{i+1},
\quad
i=1,\ldots,k-1,
\end{equation}
as well as the restrictions
\begin{equation}\label{eq8.8}
b_{i-1}=0
\quad\mbox{if}\quad
b_i=a_{i+1},
\quad
i=1,\ldots,k,
\end{equation}
where $a_1,a_2,a_3,\ldots$ are digits of the continued fraction \eqref{eq8.1} of~$\alpha$, and $q_0,q_1,q_2,\ldots$ are the denominators of the successive convergents of~$\alpha$.
Furthermore, the value of the integer $k$ is determined by the inequalities $q_k\le N<q_{k+1}$.
We also say that a sequence $b_0,b_1,\ldots,b_k$ that satisfies \eqref{eq8.6}--\eqref{eq8.8} is $\alpha$-legitimate.

We need one more concept that is not so well known.
If $\alpha$ is irrational, then any real number $\beta\in(-\alpha,1-\alpha)$ can be written in the form
\begin{equation}\label{eq8.9}
\beta=\sum_{i=0}^\infty c_i\eta_i,
\end{equation}
where the integers $c_0,\ldots,c_k$ satisfy the conditions
\begin{equation}\label{eq8.10}
0\le c_0<a_1,
\quad
0\le c_i\le a_{i+1},
\quad
i=1,2,3,\ldots,
\end{equation}
and
\begin{equation}\label{eq8.11}
c_{i-1}=0
\quad\mbox{if}\quad
c_i=a_{i+1},
\quad
i=1,2,3,\ldots,
\end{equation}
where $a_1,a_2,a_3,\ldots$ are digits of the continued fraction \eqref{eq8.1} of~$\alpha$.
Furthermore, if we exclude the case
\begin{equation}\label{eq8.12}
c_{u_0+2i}=a_{u_0+2i+1}
\quad\mbox{for some $u_0$ and all $i\ge0$},
\end{equation}
then the representation \eqref{eq8.9} under the conditions \eqref{eq8.10} and \eqref{eq8.11} is unique.
We call this the $\alpha$-expansion of the real number $\beta\in(-\alpha,1-\alpha)$.

The $\alpha$-expansion of the real number $\beta\in(-\alpha,1-\alpha)$ follows from the density of $n\alpha$ mod~$1$ and the
$\alpha$-representations of positive integers.
Indeed, since $n\alpha$ mod~$1$ is dense in the unit interval, for any real number $\beta\in(-\alpha,1-\alpha)$, there is an infinite sequence of positive integers $1\le n_1<n_2<\ldots<n_r<\ldots$ such that
\begin{equation}\label{eq8.13}
\lim_{r\to\infty}n_r\alpha=\beta\bmod{1}.
\end{equation}
For each integer $n_r$ of this sequence, consider the $\alpha$-representation
\begin{displaymath}
n_r=\sum_{i=0}^{k(r)}b_i(n_r)q_i,
\end{displaymath}
where the digits $b_0(n_r),\ldots,b_{k(r)}(n_r)$ satisfy conditions analogous to \eqref{eq8.7} and \eqref{eq8.8}.
Using \eqref{eq8.2}--\eqref{eq8.4} and \eqref{eq8.7}, we have the upper bound
\begin{align}\label{eq8.14}
\sum_{i=0}^{k(r)}b_i(n_r)\eta_i
&
\le(a_1-1)\eta_0+a_3\eta_2+a_5\eta_4+a_7\eta_6+\ldots
\nonumber
\\
&
=(a_1-1)\eta_0+(\eta_3-\eta_1)+(\eta_5-\eta_3)+(\eta_7-\eta_5)+\ldots
\nonumber
\\
&
=(a_1-1)\eta_0-\eta_1
=(a_1-1)\alpha-(a_1\alpha-1)
=1-\alpha,
\end{align}
and the lower bound
\begin{align}\label{eq8.15}
\sum_{i=0}^{k(r)}b_i(n_r)\eta_i
&
\ge a_2\eta_1+a_4\eta_3+a_6\eta_5+\ldots
\nonumber
\\
&
=(\eta_2-\eta_0)+(\eta_4-\eta_2)+(\eta_6-\eta_4)+\ldots
=-\eta_0
=-\alpha.
\end{align}
Note next that
\begin{equation}\label{eq8.16}
\sum_{i=0}^{k(r)}b_i(n_r)\eta_i
=\sum_{i=0}^{k(r)}b_i(n_r)(q_i\alpha-p_i)
=n_r\alpha-\sum_{i=0}^{k(r)}b_i(n_r)p_i
=n_r\alpha\bmod{1}.
\end{equation}
Since $\beta\in(-\alpha,1-\alpha)$, it now follows on combining \eqref{eq8.13}--\eqref{eq8.16} that
\begin{displaymath}
\lim_{r\to\infty}\sum_{i=0}^{k(r)}b_i(n_r)\eta_i=\beta.
\end{displaymath}
Combining this with a standard compactness argument, we obtain the existence of an $\alpha$-expansion \eqref{eq8.9} with the coefficients satisfying \eqref{eq8.10} and \eqref{eq8.11}.
Indeed, since $0\le b_0(n_r)<a_1$, there exists an infinite set $R_0$ such that $b_0(n_r)$ for every $r\in R_0$ has the same value~$c_0$, say.
Since $0\le b_1(n_r)\le a_2$, there exists an infinite subset $R_1\subset R_0$ such that $b_1(n_r)$ for every $r\in R_1$ has the same value~$c_1$, say.
And so on.
Compactness defines the infinite sequence of coefficients~$c_i$.
On the other hand, the convergence of the series on the right hand side of \eqref{eq8.9} is clear from the bound \eqref{eq8.5} and the exponent growth of the sequence $q_{k+1}$ as shown by \eqref{eq8.2} which gives the estimate $q_{k+1}\ge q_k+q_{k-1}\ge2q_{k-1}$. 

The fact that \eqref{eq8.12} guarantees uniqueness of the $\alpha$-expansion is left to the reader as an exercise.

We shall be concerned with real numbers $\beta$ satisfying $0<\beta<1-\alpha$.
It is well known that for any $\beta$ with $\alpha$-representation \eqref{eq8.9}, we have $0<\beta<1-\alpha$ if and only if
\begin{equation}\label{eq8.17}
\min\{i=0,1,2,3,\ldots:c_i\ge1\}
\mbox{ is even}.
\end{equation}
Let $\alpha\in(0,1)$ be a fixed irrational number, and let $N\ge1$ be an integer.
For any non-zero real number $\beta$ satisfying $0<\beta<1-\alpha$, let
\begin{equation}\label{eq8.18}
\Phi(\alpha;\beta;N)=\vert\{q=0,\ldots,N-1:\{q\alpha\}\in[0,\beta)\}\vert.
\end{equation}

The next result gives a fairly complicated parity formula for the difference of two counting numbers $\Phi(\alpha;\beta';N)$ and $\Phi(\alpha;\beta'';N)$ in terms of the continued fraction of~$\alpha$, the $\alpha$-representation of $N$ and the $\alpha$-expansions of $\beta'$ and~$\beta''$ under some very special circumstances.

\begin{lem}\label{lem81}
Suppose that $\alpha\in(0,1)$ is a fixed irrational number, and that the integer $N$ satisfies $1\le N<q_{k+1}$.
Suppose further that
\begin{equation}\label{eq8.19}
N=\sum_{i=0}^kb_iq_i=\sum_{i=0}^kb_i(N)q_i
\end{equation}
denotes the $\alpha$-representation of an integer $N\ge1$, and that
\begin{equation}\label{eq8.20}
\beta'=\sum_{i=0}^\infty c'_i\eta_i
\quad\mbox{and}\quad
\beta''=\sum_{i=0}^\infty c''_i\eta_i
\end{equation}
denote respectively the $\alpha$-expansions of two real numbers $\beta',\beta''\in(0,1-\alpha)$, where the digits $c'_i$ and $c''_i$ are all even, and where $c'_i$ and $c''_i$ are non-zero whenever $i$ is even.
For every integer $j=0,\ldots,k$, let
\begin{equation}\label{eq8.21}
N_j=N_j(N)=\sum_{i=0}^jb_iq_i
\end{equation}
denote an integer defined in terms of the $\alpha$-representation of~$N$, and let
\begin{equation}\label{eq8.22}
C'_j=C'_j(N)=\sum_{i=0}^jc'_iq_i
\quad\mbox{and}\quad
C''_j=C''_j(N)=\sum_{i=0}^jc''_iq_i
\end{equation}
denote respectively integers defined in terms of the $\alpha$-expansions of $\beta'$ and~$\beta''$.
For every integer $\ell=1,\ldots,k$, let
\begin{equation}\label{eq8.23}
\Delta'_\ell
=\Delta'_\ell(N)
=\left\{\begin{array}{ll}
1,&\mbox{if $\ell$ is even and $C'_{\ell-1}<N_{\ell-1}\le N_\ell<C'_\ell$},\\
-1,&\mbox{if $\ell$ is odd and $N_{\ell-1}\le C'_{\ell-1}\le C'_\ell<N_\ell$},\\
0,&\mbox{otherwise},
\end{array}\right.
\end{equation}
and
\begin{equation}\label{eq8.24}
\Delta''_\ell
=\Delta''_\ell(N)
=\left\{\begin{array}{ll}
1,&\mbox{if $\ell$ is even and $C''_{\ell-1}<N_{\ell-1}\le N_\ell<C''_\ell$},\\
-1,&\mbox{if $\ell$ is odd and $N_{\ell-1}\le C''_{\ell-1}\le C''_\ell<N_\ell$},\\
0,&\mbox{otherwise}.
\end{array}\right.
\end{equation}
Then, provided that the coefficients $b_1,\ldots,b_k$ in \eqref{eq8.19} and \eqref{eq8.21} satisfy
\begin{equation}\label{eq8.25}
b_i<a_{i+1},
\quad
i=1,\ldots,k,
\end{equation}
we have
\begin{align}\label{eq8.26}
&
\Phi(\alpha;\beta'';N)-\Phi(\alpha;\beta';N)
\nonumber
\\
&\quad
=\sum_{\ell=0}^k\min\{b_\ell,c'_\ell\}
+\sum_{\ell=0}^k\min\{b_\ell,c''_\ell\}
+\sum_{\ell=1}^k\Delta'_\ell
+\sum_{\ell=1}^k\Delta''_\ell
\bmod{2}.
\end{align}
\end{lem}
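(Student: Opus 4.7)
The plan is to reduce Lemma~\ref{lem81} to a more basic mod-$2$ identity for $\Phi(\alpha;\beta;N)$ in terms of a single pair $(N,\beta)$, from which the stated formula for the difference is an immediate corollary. Since the right-hand side of \eqref{eq8.26} splits cleanly into a contribution depending only on $(b_i,c'_i)$ and one depending only on $(b_i,c''_i)$, it suffices to establish, for any $\beta\in(0,1-\alpha)$ whose $\alpha$-expansion $\beta=\sum_{i\ge0}c_i\eta_i$ has all digits $c_i$ even and $c_i\ne 0$ at every even~$i$, the congruence
\begin{equation}\label{eqplan}
\Phi(\alpha;\beta;N)\equiv\sum_{\ell=0}^k\min\{b_\ell,c_\ell\}+\sum_{\ell=1}^k\Delta_\ell\pmod{2},
\end{equation}
where $C_j=\sum_{i=0}^jc_iq_i$ and $\Delta_\ell$ is the obvious single-$\beta$ analogue of \eqref{eq8.23}, obtained by replacing $C'_j$ by $C_j$. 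Adding the instances of \eqref{eqplan} for $\beta'$ and $\beta''$ modulo~$2$ then yields \eqref{eq8.26}.

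To prove \eqref{eqplan}, I would induct on $k$, the top index in the $\alpha$-representation of~$N$. The base case $k=0$ reduces to counting $0\le q<b_0$ with $q\alpha<\beta$; since $b_0\alpha<a_1\alpha<1$ and the first non-vanishing $\alpha$-expansion digit of $\beta$ occurs at an even index (so $\beta>0$ by \eqref{eq8.17}), this count equals $\min\{b_0,c_0\}$, matching the right-hand side in the absence of $\Delta$ terms. For the inductive step I would write $N=N_{k-1}+b_kq_k$ and split $\Phi(\alpha;\beta;N)$ into $\Phi(\alpha;\beta;N_{k-1})$ plus the top-layer contribution
\begin{displaymath}
T_k=\sum_{j=0}^{b_k-1}\sum_{r=0}^{q_k-1}\chi_{[0,\beta)}\bigl(\{(N_{k-1}+jq_k+r)\alpha\}\bigr),
\end{displaymath}
and invoke the inductive hypothesis on the first term.

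The crux is then to show that $T_k$ is congruent modulo~$2$ to the increment of the right-hand side of \eqref{eqplan} going from $N_{k-1}$ to $N$, namely $\min\{b_k,c_k\}+\Delta_k$. I would attack this by exploiting that $q_k\alpha\equiv\eta_k\pmod 1$ and that $\{r\alpha\}$ for $0\le r<q_k$ is a best-uniform partition of $[0,1)$ governed by the two-distance theorem, with gap sizes $|\eta_{k-1}|$ and $|\eta_{k-1}|-|\eta_k|$. This lets one count, for each $j$, the number of $r\in[0,q_k)$ with $\{\tau_j+r\alpha\}\in[0,\beta)$, where $\tau_j=\{(N_{k-1}+jq_k)\alpha\}$: modulo~$2$ this count is governed by whether successive phases $\tau_j=\tau_0+j\eta_k$ (mod $1$) cross the level $\beta$ from below or from above as $j$ runs over $0,\ldots,b_k-1$. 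Summing over~$j$ and using that $c_k$ is even produces $\min\{b_k,c_k\}$ as the main term, while the precise way in which the first and last sub-progression in the layer straddle the boundary $\beta$ is exactly what $\Delta_k$ records through the inequalities comparing $N_{k-1}, N_k$ with $C_{k-1}, C_k$ in \eqref{eq8.23}.

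The main obstacle will be the clean bookkeeping of the boundary contributions. The hypothesis $b_i<a_{i+1}$ in \eqref{eq8.25} is what guarantees that no layer exhausts its available range, thereby avoiding an exceptional wrap-around of the orbit that would introduce extra $\pm 1$ errors propagating through the subsequent inductive steps. The evenness of each $c_i$ is what forces bulk contributions of the form $b_\ell c_\ell$ to vanish modulo~$2$, leaving only the residual $\min\{b_\ell,c_\ell\}$; the requirement that $c_i\ne 0$ at even~$i$ ensures $\beta\in(0,1-\alpha)$ via \eqref{eq8.17} and rules out degenerate alignments where $\beta$ coincides with a short-term orbit point $\{q\alpha\}$. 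Finally, the sign/parity dependence on $\ell$ built into \eqref{eq8.23}, which ultimately reflects \eqref{eq8.4} (that $\eta_\ell>0$ for $\ell$ even and $\eta_\ell<0$ for $\ell$ odd), must be tracked carefully throughout the induction to ensure that the $\Delta_\ell$ corrections aggregate with the correct sign.
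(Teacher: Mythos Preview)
Your plan contains a genuine gap at the very first step: the single-$\beta$ congruence \eqref{eqplan} is \emph{false} in general. A correct mod~$2$ formula for a single $\Phi(\alpha;\beta;N)$ has, in addition to $\sum_\ell\min\{b_\ell,c_\ell\}+\sum_\ell\Delta_\ell$, a nontrivial \emph{translation term} that depends only on the digits $b_0,\ldots,b_k$ of $N$ and not at all on the $c_i$'s. The paper states this explicitly in the paragraph following the lemma, and its proof in Section~\ref{sec9} makes the term visible: after the case analysis one finds
\[
\Phi(\alpha;\beta;N)\equiv\sum_{m=0}^k b_m p_m+\EEE_1+\EEE_3+\sum_{\ell=0}^k\min\{b_\ell,c_\ell\}+\sum_{\ell=1}^k\Delta_\ell\pmod 2,
\]
where $\EEE_1,\EEE_3$ are certain counts that are independent of the $c_i$'s. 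Since $p_1=1$, already $\sum_m b_m p_m$ is odd whenever $b_1$ is odd, so \eqref{eqplan} cannot hold for all admissible~$N$. This is precisely why the lemma is stated for the \emph{difference} $\Phi(\alpha;\beta'';N)-\Phi(\alpha;\beta';N)$: the translation term appears twice and cancels modulo~$2$. Your reduction ``it suffices to prove \eqref{eqplan}'' therefore fails, and the base case sketch is already suspect: the count of $q<b_0$ with $q\alpha<\beta$ equals $\min\{b_0,c_0\}$ only when the tail $\sum_{i\ge1}c_i\eta_i$ is negative, and otherwise can be $\min\{b_0,c_0+1\}$.

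The paper's own argument is not inductive on~$k$ but a direct double decomposition. Using the digit criteria for $q<N$ (largest index $m$ with $x_m\ne b_m$) and for $\{q\alpha\}<\beta$ (smallest index $\ell$ with $x_\ell\ne c_\ell$, together with the parity constraint \eqref{eq9.1}), one writes $\Phi(\alpha;\beta;N)=\sum_{\ell,m}\Phi_{\ell,m}$ and evaluates the five cases $m>\ell$ ($\ell$ even/odd), $m=\ell$ ($\ell$ even/odd), and $m<\ell$. A small combinatorial lemma (Lemma~\ref{lem91}) handles the free digits between $\ell$ and~$m$; the hypothesis \eqref{eq8.25} is used in exactly the way you anticipated, to kill boundary effects from $x_{m+1}=a_{m+2}$. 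The evenness of all $c_i$ makes the bulk terms vanish and produces $\sum_\ell\min\{b_\ell,c_\ell\}$; the case $m<\ell$ produces the $\Delta_\ell$'s via Lemma~\ref{lem82}. What is left over is the $c$-independent translation term above, which is eliminated only by subtraction. If you want to salvage an inductive proof, you must carry this extra $N$-dependent contribution through every step of the induction and verify that it is the same for $\beta'$ and~$\beta''$; as written, your scheme omits it entirely.
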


We shall prove Lemma~\ref{lem81} in Section~\ref{sec9}.
There the reader will see that a parity formula for a single counting number $\Phi(\alpha;\beta;N)$ contains a \textit{translation} term which we are not able to handle.
Thus by studying the difference of two counting numbers, this term appears twice and therefore cancel each other modulo~$2$.

\begin{remark}
The counting number $\Phi(\alpha;\beta;N)$ is related to the discrepancy function
\begin{displaymath}
D(\alpha;\beta;N)=\vert\{q=0,\ldots,N-1:\{q\alpha\}\in[0,\beta)\}\vert-N\beta
\end{displaymath}
for which there is an explicit formula due to S\'{o}s~\cite{So3}.
In fact, one can derive our parity formula using the ideas of S\'{o}s.
However, it would be most unkind to ask the reader to work out the details.
Instead, we include in the next section a detailed proof by closely following the method of S\'{o}s.
\end{remark}

The conditions in \eqref{eq8.23} and \eqref{eq8.24} may look elegant, but as they stand, they are not of much use.
For applications later, we need a less elegant but more convenient form.
We summarize it below.
The proof is almost trivial.

\begin{lem}\label{lem82}
Suppose that for every integer $j=0,\ldots,k$, the integer $C_j$ is defined in terms of \eqref{eq8.9} in precisely the same way as the integers $C'_j$ and $C''_j$ are defined by \eqref{eq8.22} in terms of the $\alpha$-expansions \eqref{eq8.20} of $\beta'$ and $\beta''$ respectively.

\emph{(i)}
The condition $C_{\ell-1}<N_{\ell-1}\le N_\ell<C_\ell$ is equivalent to
\begin{equation}\label{eq8.27}
b_\ell<c_\ell,
\end{equation}
together with the existence of an integer $m<\ell$ such that
\begin{equation}\label{eq8.28}
c_m<b_m
\quad\mbox{and}\quad
c_i=b_i,
\quad
m<i<\ell.
\end{equation}

\emph{(ii)}
The condition $N_{\ell-1}\le C_{\ell-1}\le C_\ell<N_\ell$ is equivalent to
\begin{equation}\label{eq8.29}
c_\ell<b_\ell,
\end{equation}
together with either
\begin{equation}\label{eq8.30}
b_i=c_i,
\quad
i<\ell,
\end{equation}
or the existence of an integer $m<\ell$ such that
\begin{equation}\label{eq8.31}
b_m<c_m
\quad\mbox{and}\quad
b_i=c_i,
\quad
m<i<\ell.
\end{equation}
\end{lem}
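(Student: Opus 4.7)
The plan is straightforward once we have the following \emph{lex comparison} principle for partial sums of $\alpha$-representations and $\alpha$-expansions, which is really the only substantive input. The key preliminary fact is that for any nonnegative digit sequence $x_0,\ldots,x_{m-1}$ satisfying the two constraints $0\le x_0<a_1$ and $0\le x_i\le a_{i+1}$ for $1\le i\le m-1$, together with the carry restriction $x_{i-1}=0$ whenever $x_i=a_{i+1}$, one has the sharp bound
\[
\sum_{i=0}^{m-1}x_iq_i\le q_m-1.
\]
First I would establish this bound by a telescoping induction using $a_{i+1}q_i=q_{i+1}-q_{i-1}$ from \eqref{eq8.2}; the extremal configuration is in fact the unique $\alpha$-representation of $q_m-1$ itself. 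Since the constraints \eqref{eq8.7}--\eqref{eq8.8} on the digits $b_i$ and the constraints \eqref{eq8.10}--\eqref{eq8.11} on the digits $c_i$ are identical in the finite range $0\le i\le m-1$, both $N_{m-1}\le q_m-1$ and $C_{m-1}\le q_m-1$ hold.

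Next I would deduce the lex comparison principle: if the $b$- and $c$-sequences differ somewhere in $\{0,\ldots,j\}$ and $m\le j$ is the largest index with $b_m\ne c_m$, then
\[
\sign(N_j-C_j)=\sign(b_m-c_m),
\]
while if all digits up to $j$ agree then $N_j=C_j$. Indeed, contributions from indices $i>m$ cancel, so $N_j-C_j=(b_m-c_m)q_m+\sum_{i<m}(b_i-c_i)q_i$, and the preliminary bound applied to the positive and negative parts of the inner sum gives $\left|\sum_{i<m}(b_i-c_i)q_i\right|\le q_m-1$, which is strictly dominated by the leading term $|b_m-c_m|q_m\ge q_m$.

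With this principle in hand both parts of Lemma~\ref{lem82} become mechanical. For part (i), rewrite the chain $C_{\ell-1}<N_{\ell-1}\le N_\ell<C_\ell$ as the two strict comparisons $N_{\ell-1}>C_{\ell-1}$ and $N_\ell<C_\ell$; the middle inequality is automatic. From the identity $N_\ell-C_\ell=(N_{\ell-1}-C_{\ell-1})+(b_\ell-c_\ell)q_\ell$ and the observation that $|N_{\ell-1}-C_{\ell-1}|<q_\ell$ by the preliminary bound, the sign flip from positive to negative forces exactly $b_\ell<c_\ell$, which is \eqref{eq8.27}. The separate condition $N_{\ell-1}>C_{\ell-1}$ then translates by the lex comparison principle into the existence of a largest index $m<\ell$ with $b_m\ne c_m$ satisfying $b_m>c_m$ and $b_i=c_i$ for $m<i<\ell$, which is precisely \eqref{eq8.28}. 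Reversing these steps, \eqref{eq8.27}--\eqref{eq8.28} reproduce both inequalities.

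Part (ii) is handled analogously. The condition $C_\ell<N_\ell$ combined with $N_{\ell-1}\le C_{\ell-1}$ forces $c_\ell<b_\ell$, giving \eqref{eq8.29}, while the weaker inequality $N_{\ell-1}\le C_{\ell-1}$ splits into two subcases: equality, which by the lex comparison principle forces all digits to agree below level $\ell$ and yields \eqref{eq8.30}; or strict inequality $N_{\ell-1}<C_{\ell-1}$, which forces the largest differing index $m<\ell$ to satisfy $b_m<c_m$ with the digits agreeing strictly between $m$ and $\ell$, yielding \eqref{eq8.31}. The only real obstacle is the careful verification of the preliminary bound $\sum_{i=0}^{m-1}x_iq_i\le q_m-1$ in the presence of the carry restriction; once this is secured the rest is bookkeeping.
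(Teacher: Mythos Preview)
Your proof is correct and is exactly the natural argument; the paper itself does not give a proof of Lemma~\ref{lem82} at all, declaring immediately after the statement that ``the proof is almost trivial.'' What you have written simply fills in the details the authors omitted: the Ostrowski bound $\sum_{i<m}x_iq_i\le q_m-1$ (equivalently, that both $N_{m-1}$ and $C_{m-1}$ lie in $[0,q_m-1]$) is the only real input, and from it the lexicographic comparison and the two parts of the lemma follow mechanically, just as you outline.
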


To prove Theorem~\ref{thm34}, the simple basic idea is to use discretization to convert the continuous problem of the distribution of an $\alpha$-geodesic on the $2$-square-$b$ surface to the discrete problem of the distribution of the sequence of points at which the $\alpha$-geodesic hits a vertical edge of the surface.
The latter gives the irrational rotation sequence $\{q\alpha\}$, $q\ge0$.
The question of left or right square clearly leads to a \textit{parity problem}, where left or right is converted to even or odd.

Let an irrational number $\alpha\in(0,1)$ be given and fixed.
Let $N\ge1$ be an arbitrary integer, and consider the unique $\alpha$-representation of $N$ as given by \eqref{eq8.6}--\eqref{eq8.8}.

Next, we use the unique $\alpha$-expansion of real numbers, and define the length $\beta'$ of a first gate in terms of the $\alpha$-expansion
\begin{equation}\label{eq8.32}
\beta'=\sum_{i=0}^\infty c'_i\eta_i,
\quad
c'_i=\left\{\begin{array}{ll}
2,&\mbox{if $i$ is even},\\
0\mbox{ or }2,&\mbox{if $i$ is odd},
\end{array}\right.
\end{equation}
and the length $\beta''$ of a second gate in terms of the $\alpha$-expansion
\begin{equation}\label{eq8.33}
\beta''=\sum_{i=0}^\infty c''_i\eta_i,
\quad
c''_i=\left\{\begin{array}{ll}
4,&\mbox{if $i$ is even},\\
0,&\mbox{if $i$ is odd}.
\end{array}\right.
\end{equation}
Clearly the condition \eqref{eq8.17} is satisfied by both $\beta'$ and~$\beta''$, so $0<\beta',\beta''<1-\alpha$.
In fact, we have
\begin{equation}\label{eq8.34}
0<\beta'<\beta''<1-\alpha.
\end{equation}

\begin{lem}\label{lem83}
Suppose that the irrational number $\alpha\in(0,1)$ satisfies \eqref{eq3.1}, and that the lengths $\beta'$ and $\beta''$ of the gates satisfy \eqref{eq8.32} and \eqref{eq8.33}.
For every integer $k\ge0$, consider the set
\begin{displaymath}
\BBB(k)=\{0,1,\ldots,q_{k+1}-1\}.
\end{displaymath}
Then we have the lower bound
\begin{equation}\label{eq8.35}
\vert\{N\in\BBB(k):\parity(\Phi(\alpha;\beta'';N)-\Phi(\alpha;\beta';N))=0\}\vert
>(1-\eps)q_{k+1},
\end{equation}
provided that $\eps>0$ is sufficiently small.
\end{lem}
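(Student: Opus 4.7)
The plan is to apply the parity formula \eqref{eq8.26} of Lemma~\ref{lem81} and show that the right-hand side vanishes modulo~$2$ for all but a proportion~$<\eps$ of $N\in\BBB(k)$. First, I would set aside the exceptional $N$ for which the hypothesis \eqref{eq8.25} fails, i.e., $b_i(N)=a_{i+1}$ for some $1\le i\le k$; by restriction \eqref{eq8.8} this forces $b_{i-1}(N)=0$, and a routine count in the Ostrowski representation shows there are at most $q_{k+1}\sum_{i\ge1}1/a_i<\eps q_{k+1}/300$ such $N$.

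For the remaining $N$, I would analyze each of the four sums in \eqref{eq8.26} modulo~$2$ using the explicit digits \eqref{eq8.32} and \eqref{eq8.33}. For the min-sums, combining the contributions at an even index $\ell$ yields $\min\{b_\ell,2\}+\min\{b_\ell,4\}$, which is odd precisely when $b_\ell=3$; at an odd index $\ell$ only $\min\{b_\ell,c'_\ell\}$ contributes, and it is odd precisely when $c'_\ell=2$ and $b_\ell=1$. For the $\Delta$-sums, Lemma~\ref{lem82} characterizes each nonvanishing $\Delta'_\ell$ or $\Delta''_\ell$ as requiring $b_\ell$ to lie in a prescribed $O(1)$-sized range together with a matching condition \eqref{eq8.28}, \eqref{eq8.30} or \eqref{eq8.31} on the earlier digits $b_i$, $i<\ell$, with target pattern dictated by $\beta'$ or $\beta''$.

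In every case, the bad event constrains some digit $b_\ell$ to a fixed subset of size $O(1)$ out of the $a_{\ell+1}+1$ admissible values, so its density among $N\in\BBB(k)$ is $O(1/a_{\ell+1})$; the matching conditions in the $\Delta$-term cases can only further decrease this density. Summing over $\ell$ and over the finitely many bad-event types, and invoking \eqref{eq3.1}, the proportion of $N$ for which some individual term on the right-hand side of \eqref{eq8.26} is odd is at most a constant multiple of $\sum_{i\ge1}1/a_i<\eps/300$, which sits well below~$\eps$. The sum is therefore even for all other $N$, yielding \eqref{eq8.35}.

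The main obstacle will be the careful bookkeeping for the $\Delta$-terms, where the matching conditions of Lemma~\ref{lem82} couple digits across several indices. My strategy would be to fix the index $m$ appearing in \eqref{eq8.28} or \eqref{eq8.31} and to sum the product of density factors $\prod_{m<i<\ell}1/(a_{i+1}+1)$ over admissible $m$; the geometric decay of these products ensures the sum is dominated by the term with $m$ closest to $\ell$, recovering the same $O(1/a_{\ell+1})$ bound as in the simpler min-sum analysis, so no new ideas beyond diligent summation are required.
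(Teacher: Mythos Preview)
Your overall strategy---union-bound over the ``bad'' digit events that could make the right-hand side of \eqref{eq8.26} odd---is sound and leads to the same $O\bigl(\sum_i 1/a_i\bigr)$ bound, but there is one incorrect step. For \emph{odd} $\ell$, Lemma~\ref{lem82}(ii) requires $c_\ell<b_\ell$; since $c''_\ell=0$ and $c'_\ell\in\{0,2\}$, this is \emph{not} an $O(1)$ constraint on $b_\ell$---it holds for almost all $b_\ell$. Your final paragraph tries to recover the density bound from the matching product $\prod_{m<i<\ell}1/(a_{i+1}+1)$, but for the dominant term $m=\ell-1$ this product is empty, so no $O(1/a_{\ell+1})$ factor emerges. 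The fix is easy: observe that for odd $\ell$, both \eqref{eq8.30} and \eqref{eq8.31} force $b_{\ell-1}\le c_{\ell-1}\le4$, an $O(1)$ constraint on $b_{\ell-1}$ rather than on $b_\ell$, giving density $O(1/a_\ell)$. After this correction your union bound goes through with a somewhat larger constant than the paper's but still comfortably below $\eps$.

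The paper takes a slightly different and cleaner route. Instead of bounding each term separately, it restricts at the outset to the set \eqref{eq8.36} where $5\le b_\ell<a_{\ell+1}$ for every $\ell\ge1$. On this set, the matching conditions \eqref{eq8.28}, \eqref{eq8.30}, \eqref{eq8.31} can never be satisfied for $\ell\ge2$ (since $b_i\ge5>c'_i,c''_i$ for all $i\ge1$), so all $\Delta'_\ell,\Delta''_\ell$ vanish except at $\ell=1$. The four sums then reduce to explicit functions of $b_0$ alone, and a direct calculation \eqref{eq8.37}--\eqref{eq8.43} shows the parity is $1$ precisely when $b_0=4$. This exploits cancellations among the four sums (for instance at $b_0\in\{0,1,2,3\}$ individual terms are odd but the total is even), which your union-bound approach throws away; the payoff is a sharper exceptional set and a cleaner subsequent analysis in Lemmas~\ref{lem84}--\ref{lem85}, where the exact dependence on $b_0$ is needed to track what happens when an extra digit $b_{k+1}=b$ is adjoined.
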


\begin{proof}
The condition \eqref{eq3.1} clearly guarantees that $a_i\ge6$, $i\ge1$, so that our choices of $\beta'$ and $\beta''$ in \eqref{eq8.32} and \eqref{eq8.33} are valid.

For each element $N\in\BBB(k)$, we can write
\begin{displaymath}
N=\sum_{i=0}^kb_i(N)q_i,
\end{displaymath}
where, for each $i=0,\ldots,k$, the coefficient $b_i=b_i(N)$ satisfies \eqref{eq8.7} and \eqref{eq8.8}, and with the convention that $b_0(0)=\ldots=b_k(0)=0$.
In this way, we see that the set $\BBB(k)$ is in one-to-one correspondence with the collection of $\alpha$-legitimate sequences $b_0,b_1,\ldots,b_k$ together with the trivial sequence $0,\ldots,0$.

Suppose that for an integer $N\in\BBB(k)$, we have
\begin{equation}\label{eq8.36}
5\le b_\ell<a_{\ell+1},
\quad
\ell=1,\ldots,k.
\end{equation}
In view of \eqref{eq8.32}, the first sum in \eqref{eq8.26} modulo~$2$ is equal to
\begin{align}\label{eq8.37}
\sum_{\ell=0}^k\min\{b_\ell,c'_\ell\}
&
=\min\{b_0,c'_0\}+\sum_{\ell=1}^k\min\{b_\ell,c'_\ell\}
\nonumber
\\
&
=\min\{b_0,2\}+\sum_{\ell=1}^kc'_\ell
=\left\{\begin{array}{ll}
1,&\mbox{if $b_0=1$},\\
0,&\mbox{if $b_0\ne1$}.
\end{array}\right.
\end{align}
In view of \eqref{eq8.33}, the second sum in \eqref{eq8.26} modulo~$2$ is equal to
\begin{align}\label{eq8.38}
\sum_{\ell=0}^k\min\{b_\ell,c''_\ell\}
&
=\min\{b_0,c''_0\}+\sum_{\ell=1}^k\min\{b_\ell,c''_\ell\}
\nonumber
\\
&
=\min\{b_0,4\}+\sum_{\ell=1}^kc''_\ell
=\left\{\begin{array}{ll}
1,&\mbox{if $b_0=1,3$},\\
0,&\mbox{if $b_0\ne1,3$}.
\end{array}\right.
\end{align}
For the third sum in \eqref{eq8.26}, note that \eqref{eq8.27} does not hold with $c'_\ell=2$, so it follows from \eqref{eq8.23} and Lemma~\ref{lem82}(i) that $\Delta'_\ell=0$ for even $\ell\ge2$.
Also \eqref{eq8.30} and \eqref{eq8.31} do not hold with $c_i=0$ or $c_i=2$, so it follows from \eqref{eq8.23} and Lemma~\ref{lem82}(ii) that
$\Delta'_\ell=0$ for odd $\ell\ge3$.
Thus the third sum in \eqref{eq8.26} is equal to
\begin{equation}\label{eq8.39}
\sum_{\ell=1}^k\Delta'_\ell=\Delta'_1.
\end{equation}
For $\ell=1$, it is clear that \eqref{eq8.29} holds.
For $b_0=0,1$, it is clear that \eqref{eq8.31} holds.
For $b_0=2$, it is clear that \eqref{eq8.30} holds.
For $b_0\ge3$, it is clear that neither \eqref{eq8.30} nor \eqref{eq8.31} holds.
Thus it follows from \eqref{eq8.23} and Lemma~\ref{lem82}(ii) that
\begin{equation}\label{eq8.40}
\Delta'_1=\left\{\begin{array}{ll}
-1,&\mbox{if $b_0=0,1,2$},\\
0,&\mbox{if $b_0\ge3$}.
\end{array}\right.
\end{equation}
For the fourth sum in \eqref{eq8.26}, note that \eqref{eq8.27} does not hold with $c''_\ell=4$, so it follows from \eqref{eq8.24} and Lemma~\ref{lem82}(i) that $\Delta''_\ell=0$ for even $\ell\ge2$.
Also \eqref{eq8.30} and \eqref{eq8.31} do not hold with $c_i=0$ or $c_i=4$, so it follows from \eqref{eq8.23} and Lemma~\ref{lem82}(ii) that
$\Delta'_\ell=0$ for odd $\ell\ge3$.
Thus the fourth sum in \eqref{eq8.26} is equal to
\begin{equation}\label{eq8.41}
\sum_{\ell=1}^k\Delta''_\ell=\Delta''_1.
\end{equation}
For $\ell=1$, it is clear that \eqref{eq8.29} holds.
For $b_0=0,1,2,3$, it is clear that \eqref{eq8.31} holds.
For $b_0=4$, it is clear that \eqref{eq8.30} holds.
For $b_0\ge5$, it is clear that neither \eqref{eq8.30} nor \eqref{eq8.31} holds.
Thus it follows from \eqref{eq8.24} and Lemma~\ref{lem82}(ii) that
\begin{equation}\label{eq8.42}
\Delta''_1=\left\{\begin{array}{ll}
-1,&\mbox{if $b_0=0,1,2,3,4$},\\
0,&\mbox{if $b_0\ge5$}.
\end{array}\right.
\end{equation}
Hence if \eqref{eq8.36} holds, then it follows from \eqref{eq8.26} and \eqref{eq8.37}--\eqref{eq8.42} that
\begin{equation}\label{eq8.43}
\parity(\Phi(\alpha;\beta'';N)-\Phi(\alpha;\beta';N))
=\left\{\begin{array}{ll}
1,&\mbox{if $b_0=4$},\\
0,&\mbox{if $b_0\ne4$}.
\end{array}\right.
\end{equation}

We have the trivial bounds
\begin{equation}\label{eq8.44}
\prod_{i=1}^{k+1}(a_i-1)
\le\vert\BBB(k)\vert
=q_{k+1}
\le\prod_{i=1}^{k+1}(a_i+1).
\end{equation}
Using the condition \eqref{eq3.1}, we have
\begin{align}\label{eq8.45}
\prod_{i=1}^{k+1}(a_i+1)
&
\le\prod_{i=1}^\infty\left(1+\frac{2}{a_i-1}\right)\prod_{i=1}^{k+1}(a_i-1)
\le\prod_{i=1}^\infty\left(1+\frac{3}{a_i}\right)\prod_{i=1}^{k+1}(a_i-1)
\nonumber
\\
&
\le\exp\left(\sum_{i=1}^\infty\frac{3}{a_i}\right)\prod_{i=1}^{k+1}(a_i-1)
\le\ee^{\eps/100}\prod_{i=1}^{k+1}(a_i-1),
\end{align}
where $\exp(x)=\ee^x$ is the exponential function.
Thus \eqref{eq8.44} and \eqref{eq8.45} give
\begin{equation}\label{eq8.46}
\prod_{i=1}^{k+1}(a_i-1)
\le\vert\BBB(k)\vert
=q_{k+1}
\le\ee^{\eps/100}\prod_{i=1}^{k+1}(a_i-1).
\end{equation}

We wish to find a lower bound for the cardinality of the set
\begin{displaymath}
\{N\in\BBB(k):\parity(\Phi(\alpha;\beta'';N)-\Phi(\alpha;\beta';N))=0\}.
\end{displaymath}
Observe from \eqref{eq8.43} that $\parity(\Phi(\alpha;\beta'';N)-\Phi(\alpha;\beta';N))=0$ except possibly when
\begin{displaymath}
b_0(N)=4,
\end{displaymath}
or \eqref{eq8.36} fails, so that
\begin{displaymath}
b_\ell(N)\in J_\ell=\{0,1,2,3,4,a_{\ell+1}\}
\quad
\mbox{for some $\ell=1,\ldots,k$}.
\end{displaymath}
Accordingly, we need to find a lower bound for the cardinality of the set
\begin{displaymath}
\BBB(k;0)=\{N\in\BBB(k):b_0(N)\ne4\},
\end{displaymath}
as well as upper bounds for the cardinality of each of the sets
\begin{displaymath}
\BBB(k;\ell;j)=\{N\in\BBB(k):b_\ell(N)=j\},
\quad
\ell=1,\ldots,k,
\quad
j\in J_\ell,
\end{displaymath}
and combine these with the inequality
\begin{align}\label{eq8.47}
&
\vert\{N\in\BBB(k):\parity(\Phi(\alpha;\beta''_0;N)-\Phi(\alpha;\beta';N))=0\}\vert
\nonumber
\\
&\quad
\ge\vert\BBB(k;0)\vert-\sum_{\ell=1}^k\sum_{j\in J_\ell}\vert\BBB(k;\ell;j)\vert.
\end{align}
Indeed, we have the trivial lower bound
\begin{displaymath}
\vert\BBB(k;0)\vert\ge(a_1-1)\prod_{i=2}^{k+1}(a_i-1)=\prod_{i=1}^{k+1}(a_i-1).
\end{displaymath}
Combining this with \eqref{eq3.1} and \eqref{eq8.46}, we obtain
\begin{equation}\label{eq8.48}
\vert\BBB(k;0)\vert\ge\ee^{-\eps/100}q_{k+1}.
\end{equation}
Also, for each $\ell=1,\ldots,k$ and $j\in J_\ell$, we have the trivial upper bound
\begin{align}
\vert\BBB(k;\ell;j)\vert
&
\le\prod_{\substack{{i=1}\\{i\ne\ell+1}}}^{k+1}(a_i+1)
\le\prod_{i=1}^\infty\left(1+\frac{3}{a_i}\right)\prod_{\substack{{i=1}\\{i\ne\ell+1}}}^{k+1}(a_i-1)
\nonumber
\\
&
\le\exp\left(\sum_{i=1}^\infty\frac{3}{a_i}\right)\prod_{\substack{{i=1}\\{i\ne\ell+1}}}^{k+1}(a_i-1)
\le\frac{2}{a_{\ell+1}}\exp\left(\sum_{i=1}^\infty\frac{3}{a_i}\right)\prod_{i=1}^{k+1}(a_i-1).
\nonumber
\end{align}
Combining this with \eqref{eq3.1} and \eqref{eq8.46}, we obtain
\begin{equation}\label{eq8.49}
\sum_{\ell=1}^k\sum_{j\in J_\ell}\vert\BBB(k;\ell;j)\vert
\le6\ee^{\eps/100}\left(\sum_{\ell=1}^k\frac{2}{a_{\ell+1}}\right)q_{k+1}
<\frac{\ee^{\eps/100}\eps}{25}q_{k+1}.
\end{equation}
Finally, combining \eqref{eq8.47}--\eqref{eq8.49}, we obtain the desired lower bound
\begin{align}
&
\vert\{N\in\BBB(k):\parity(\Phi(\alpha;\beta'';N)-\Phi(\alpha;\beta';N))=0\}\vert
\nonumber
\\
&\quad
>\left(\ee^{-\eps/100}-\frac{\ee^{\eps/100}\eps}{25}\right)q_{k+1}
>(1-\eps)q_{k+1},
\nonumber
\end{align}
provided that $\eps>0$ is sufficiently small.
\end{proof}

Next, for every $b=1,\ldots,a_{k+2}-1$, we consider the sets
\begin{displaymath}
\BBB^*(k;b)=\{bq_{k+1},bq_{k+1}+1,\ldots,(b+1)q_{k+1}-1\}=bq_{k+1}+\BBB(k),
\end{displaymath}
where every element $N\in\BBB^*(k;b)$ can be written in the form
\begin{displaymath}
N=bq_{k+1}+\sum_{i=0}^kb_i(N)q_i=q_{k+1}+\sum_{i=0}^kb_i(N-bq_{k+1})q_i,
\end{displaymath}
and the coefficients $b_i=b_i(N)=b_i(N-bq_{k+1})$ satisfy \eqref{eq8.7} and \eqref{eq8.8}.
For every element $N\in\BBB^*(k)$, the analog of \eqref{eq8.26} is
\begin{align}\label{eq8.50}
&
\Phi(\alpha;\beta'';N)-\Phi(\alpha;\beta';N)
\nonumber
\\
&\quad
=\sum_{\ell=0}^{k+1}\min\{b_\ell,c'_\ell\}
+\sum_{\ell=0}^{k+1}\min\{b_\ell,c''_\ell\}
+\sum_{\ell=1}^{k+1}\Delta'_\ell
+\sum_{\ell=1}^{k+1}\Delta''_\ell,
\end{align}
where $b_{k+1}=b$.

\begin{lem}\label{lem84}
Under the hypotheses of Lemma~\ref{lem83}, suppose further that $k$ is odd.
Then for every $b=1,\ldots,a_{k+2}-1$ apart from $b=2$, we have the lower bound
\begin{equation}\label{eq8.51}
\vert\{N\in\BBB^*(k;b):\parity(\Phi(\alpha;\beta'';N)-\Phi(\alpha;\beta';N))=0\}\vert
>(1-\eps)q_{k+1},
\end{equation}
provided that $\eps>0$ is sufficiently small.
Furthermore, we have the lower bound
\begin{equation}\label{eq8.52}
\vert\{N\in\BBB^*(k;2):\parity(\Phi(\alpha;\beta'';N)-\Phi(\alpha;\beta';N))=1\}\vert
>(1-\eps)q_{k+1},
\end{equation}
provided that $\eps>0$ is sufficiently small.
\end{lem}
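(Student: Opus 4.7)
My plan is to follow the same template as the proof of Lemma~\ref{lem83}, but apply the parity formula \eqref{eq8.50} — which now runs up to index $k+1$ — and carefully analyze the single extra index $\ell=k+1$ introduced by the block shift. I will restrict attention to those $N\in\BBB^*(k;b)$ whose expansion coefficients satisfy $5\le b_\ell<a_{\ell+1}$ for $\ell=1,\ldots,k$, since the overwhelming majority of elements satisfy this by the same counting estimates \eqref{eq8.46}--\eqref{eq8.49} (the map $N\mapsto N-bq_{k+1}$ is a bijection from $\BBB^*(k;b)$ onto $\BBB(k)$ preserving $b_0,\ldots,b_k$). For such $N$, the contribution to \eqref{eq8.50} coming from indices $\ell=0,1,\ldots,k$ is exactly the quantity analyzed in the proof of Lemma~\ref{lem83}, and reduces modulo~$2$ to $1$ if $b_0=4$ and to $0$ otherwise.

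Next I will compute the new index $\ell=k+1$ contribution. Since $k$ is odd by hypothesis, $k+1$ is even, so $c'_{k+1}=2$ and $c''_{k+1}=4$ by \eqref{eq8.32}--\eqref{eq8.33}. The first two sums in \eqref{eq8.50} gain $\min\{b,2\}+\min\{b,4\}$, which taken modulo~$2$ gives
\[
\min\{1,2\}+\min\{1,4\}\equiv 0,\quad
\min\{2,2\}+\min\{2,4\}\equiv 0,\quad
\min\{3,2\}+\min\{3,4\}\equiv 1,
\]
and $\min\{b,2\}+\min\{b,4\}\equiv 0$ for every $b\ge4$. For the $\Delta$-terms I apply Lemma~\ref{lem82}(i) at $\ell=k+1$ (even). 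The required condition \eqref{eq8.28} is immediate with the choice $m=k$: the range $k<i<k+1$ is empty, and since $c'_k,c''_k\in\{0,2,4\}$ while $b_k\ge5$, one has $c'_k<b_k$ and $c''_k<b_k$. Hence $\Delta'_{k+1}=1$ iff $b<c'_{k+1}=2$, i.e.\ $b=1$, and $\Delta''_{k+1}=1$ iff $b<c''_{k+1}=4$, i.e.\ $b\in\{1,2,3\}$. Tabulating the total index $k+1$ contribution $\min\{b,2\}+\min\{b,4\}+\Delta'_{k+1}+\Delta''_{k+1}\pmod{2}$ yields $0$ for $b=1$, $1$ for $b=2$, and $0$ for every $b\ge3$.

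Combining the two halves, the parity of $\Phi(\alpha;\beta'';N)-\Phi(\alpha;\beta';N)$ equals $1$ iff exactly one of the conditions ``$b_0=4$'' and ``$b=2$'' holds. In particular, for $b\ne2$ it equals $0$ unless $b_0=4$, and for $b=2$ it equals $1$ unless $b_0=4$. The counting estimate is then the same as in Lemma~\ref{lem83}: the number of $N\in\BBB^*(k;b)$ violating $5\le b_\ell<a_{\ell+1}$ for some $\ell\in\{1,\ldots,k\}$, or having $b_0=4$, is bounded above by (essentially) the right-hand side of \eqref{eq8.49} together with a $q_{k+1}/a_1$-sized term from $b_0=4$, and \eqref{eq3.1} together with \eqref{eq8.46} absorbs both into an error of size $\eps\,q_{k+1}$. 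This delivers \eqref{eq8.51} and \eqref{eq8.52}.

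The only real subtlety is the $\Delta$-calculation at $\ell=k+1$: one has to recognize that the hypothesis $b_k\ge5$ automatically provides a valid witness $m=k$ for the auxiliary clause of Lemma~\ref{lem82}(i), which is what cleanly isolates $b=2$ as the unique exceptional shift. Everything else is a careful bookkeeping of the Lemma~\ref{lem83} argument extended by one index, combined with the same Ostrowski-counting estimates \eqref{eq8.46}--\eqref{eq8.49}.
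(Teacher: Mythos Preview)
Your proposal is correct and follows essentially the same approach as the paper: both proofs compute the four extra terms at index $\ell=k+1$ in \eqref{eq8.50}, use Lemma~\ref{lem82}(i) with the witness $m=k$ (valid because $b_k\ge5>c'_k,c''_k$ under \eqref{eq8.36}), tabulate to isolate $b=2$ as the unique parity-flipping shift, and then transfer the counting estimates from Lemma~\ref{lem83} via the bijection $N\mapsto N-bq_{k+1}$. Your write-up is in fact slightly more explicit than the paper's about why $m=k$ is a valid witness for \eqref{eq8.28}.
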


\begin{proof}
Clearly $k+1$ is even, so it follows from \eqref{eq8.32} and \eqref{eq8.33} that $c'_{k+1}=2$ and $c''_{k+1}=4$.
Then the first sum in \eqref{eq8.50}, compared modulo~$2$ to the corresponding sum in \eqref{eq8.26}, has an extra term
\begin{equation}\label{eq8.53}
\min\{b_{k+1},c'_{k+1}\}
=\min\{b,2\}
=\left\{\begin{array}{ll}
1,&\mbox{if $b=1$},\\
0,&\mbox{if $b\ne1$}.
\end{array}\right.
\end{equation}
The second sum in \eqref{eq8.50}, compared modulo~$2$ to the corresponding sum in \eqref{eq8.26}, has an extra term
\begin{equation}\label{eq8.54}
\min\{b_{k+1},c''_{k+1}\}
=\min\{b,4\}
=\left\{\begin{array}{ll}
1,&\mbox{if $b=1,3$},\\
0,&\mbox{if $b\ne1,3$}.
\end{array}\right.
\end{equation}
The third sum in \eqref{eq8.50}, compared to the corresponding sum in \eqref{eq8.26}, has an extra term $\Delta'_{k+1}$.
If $b=1$, then \eqref{eq8.27} and \eqref{eq8.28} hold with $\ell=k+1$ and $m=k$, so it follows from \eqref{eq8.23} and Lemma~\ref{lem82}(ii) that
$\Delta'_{k+1}=1$.
If $b\ne1$, then \eqref{eq8.27} does not hold with $\ell=k+1$, so it follows from \eqref{eq8.23} and Lemma~\ref{lem82}(ii) that $\Delta'_{k+1}=0$.
Thus
\begin{equation}\label{eq8.55}
\Delta'_{k+1}=\left\{\begin{array}{ll}
1,&\mbox{if $b=1$},\\
0,&\mbox{if $b\ne1$}.
\end{array}\right.
\end{equation}
The fourth sum in \eqref{eq8.50}, compared to the corresponding sum in \eqref{eq8.26}, has an extra term $\Delta''_{k+1}$.
If $b=1,2,3$, then \eqref{eq8.27} and \eqref{eq8.28} hold with $\ell=k+1$ and $m=k$, so it follows from \eqref{eq8.24} and Lemma~\ref{lem82}(ii) that
$\Delta'_{k+1}=1$.
If $b\ne1,2,3$, then \eqref{eq8.27} does not hold with $\ell=k+1$, so it follows from \eqref{eq8.24} and Lemma~\ref{lem82}(ii) that $\Delta''_{k+1}=0$.
Thus
\begin{equation}\label{eq8.56}
\Delta''_{k+1}=\left\{\begin{array}{ll}
1,&\mbox{if $b=1,2,3$},\\
0,&\mbox{if $b\ne1,2,3$}.
\end{array}\right.
\end{equation}
We now combine \eqref{eq8.53}--\eqref{eq8.56}.
If $b\ne2$, then compared to \eqref{eq8.26}, there is no change in parity, so that if \eqref{eq8.36} holds, then
\begin{displaymath}
\parity(\Phi(\alpha;\beta'';N)-\Phi(\alpha;\beta';N))=\left\{\begin{array}{ll}
1,&\mbox{if $b_0=4$},\\
0,&\mbox{if $b_0\ne4$},
\end{array}\right.
\end{displaymath}
resulting in a lower bound \eqref{eq8.51}, provided that $\eps>0$ is sufficiently small.
If $b=2$, then compared to \eqref{eq8.26}, there is a change in parity, so that if \eqref{eq8.36} holds, then
\begin{displaymath}
\parity(\Phi(\alpha;\beta'';N)-\Phi(\alpha;\beta';N))=\left\{\begin{array}{ll}
0,&\mbox{if $b_0=4$},\\
1,&\mbox{if $b_0\ne4$},
\end{array}\right.
\end{displaymath}
resulting in a lower bound \eqref{eq8.52}, provided that $\eps>0$ is sufficiently small.
\end{proof}

\begin{lem}\label{lem85}
Under the hypotheses of Lemma~\ref{lem83}, suppose further that $k$ is even.

If $c'_{k+1}=0$, then for every $b=1,\ldots,a_{k+2}-1$, we have the lower bound
\begin{equation}\label{eq8.57}
\vert\{N\in\BBB^*(k;b):\parity(\Phi(\alpha;\beta'';N)-\Phi(\alpha;\beta';N))=0\}\vert
>(1-\eps)q_{k+1},
\end{equation}
provided that $\eps>0$ is sufficiently small.

If $c'_{k+1}=2$, then for every $b=1,\ldots,a_{k+2}-1$ apart from $b=1$, we have the lower bound
\begin{equation}\label{eq8.58}
\vert\{N\in\BBB^*(k;b):\parity(\Phi(\alpha;\beta'';N)-\Phi(\alpha;\beta';N))=0\}\vert
>(1-\eps)q_{k+1},
\end{equation}
provided that $\eps>0$ is sufficiently small.
Furthermore. we have the lower bound
\begin{equation}\label{eq8.59}
\vert\{N\in\BBB^*(k;1):\parity(\Phi(\alpha;\beta'';N)-\Phi(\alpha;\beta';N))=1\}\vert
>(1-\eps)q_{k+1},
\end{equation}
provided that $\eps>0$ is sufficiently small.
\end{lem}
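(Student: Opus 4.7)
The plan is to adapt the proof of Lemma~\ref{lem84}, exploiting the fact that since $k$ is even, $k+1$ is odd. By \eqref{eq8.32} and \eqref{eq8.33}, this forces $c'_{k+1}\in\{0,2\}$ and $c''_{k+1}=0$. Moreover, the analysis of $\Delta'_{k+1}$ and $\Delta''_{k+1}$ now falls under Lemma~\ref{lem82}(ii) instead of Lemma~\ref{lem82}(i), which is the key structural difference with Lemma~\ref{lem84}.

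First I would compute the four extra contributions in \eqref{eq8.50} compared with \eqref{eq8.26} under the majority hypothesis \eqref{eq8.36}. The second sum contributes $\min\{b,c''_{k+1}\}=\min\{b,0\}=0$. The first sum contributes $\min\{b,c'_{k+1}\}$, which is identically $0$ when $c'_{k+1}=0$, and when $c'_{k+1}=2$ equals $1\bmod 2$ only for $b=1$ and $0\bmod 2$ for $b\ge 2$. Next I would show that the third and fourth extra terms $\Delta'_{k+1}$ and $\Delta''_{k+1}$ both vanish under \eqref{eq8.36}. By Lemma~\ref{lem82}(ii) applied at $\ell=k+1$, a non-zero value requires either \eqref{eq8.30} or \eqref{eq8.31}, so that $b_i=c_i$ for all indices $i$ in a certain range reaching up to $k$. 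But the hypothesis \eqref{eq8.36} forces $b_i\ge 5$ for $i=1,\ldots,k$, whereas $c'_i\le 2$ and $c''_i\le 4$, so the required equality $b_k=c_k$ cannot hold. Hence under \eqref{eq8.36} the $\Delta$-contributions from the new index $k+1$ vanish identically.

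Combining this with the parity analysis of \eqref{eq8.43} from the proof of Lemma~\ref{lem83}, I would conclude that under \eqref{eq8.36} we have
\begin{equation*}
\parity(\Phi(\alpha;\beta'';N)-\Phi(\alpha;\beta';N))
=\parity(\mathbf{1}[b_0=4])+\parity(\min\{b,c'_{k+1}\}).
\end{equation*}
If $c'_{k+1}=0$, or if $c'_{k+1}=2$ and $b\ge 2$, the second summand is $0$, so the parity agrees with the Lemma~\ref{lem83} pattern and equals $0$ precisely when $b_0\ne 4$. If $c'_{k+1}=2$ and $b=1$, the parity is flipped, so it equals $1$ precisely when $b_0\ne 4$.

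Finally, to obtain the three lower bounds \eqref{eq8.57}, \eqref{eq8.58}, \eqref{eq8.59}, I would run the counting argument from \eqref{eq8.44}--\eqref{eq8.49} on the shifted set $\BBB^*(k;b)=bq_{k+1}+\BBB(k)$, which has the same cardinality $q_{k+1}$ as $\BBB(k)$ and inherits the same digit structure in positions $0,1,\ldots,k$. The bad subsets (where either $b_0=4$, or \eqref{eq8.36} fails) are controlled by the telescoping bound $\sum_{\ell=1}^k 6/a_{\ell+1}<\varepsilon/25$ via hypothesis \eqref{eq3.1}, yielding each of the desired inequalities for sufficiently small $\varepsilon>0$. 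The main obstacle here is nothing new computationally, but rather the careful bookkeeping in Lemma~\ref{lem82}(ii) to confirm that the restrictive digit-matching conditions are incompatible with \eqref{eq8.36}; once that is verified, the rest is essentially a translation of the argument of Lemma~\ref{lem84} to the present parity of $k$.
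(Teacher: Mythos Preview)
Your proposal is correct and follows essentially the same approach as the paper's proof. Both arguments identify that since $k+1$ is odd one has $c''_{k+1}=0$, compute the four extra contributions in \eqref{eq8.50} relative to \eqref{eq8.26}, and show via Lemma~\ref{lem82}(ii) that $\Delta'_{k+1}=\Delta''_{k+1}=0$ under \eqref{eq8.36} (the paper simply asserts that \eqref{eq8.30} and \eqref{eq8.31} fail, while you make explicit that the required equality $b_k=c_k$ is blocked by $b_k\ge 5>c'_k,c''_k$); the counting step is then identical to that in Lemma~\ref{lem83}.
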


\begin{proof}
Clearly $k+1$ is odd, and it follows from \eqref{eq8.33} that $c''_{k+1}=0$.
The second sum in \eqref{eq8.50}, compared to the corresponding sum in \eqref{eq8.26}, has an extra term
\begin{equation}\label{eq8.60}
\min\{b_{k+1},c''_{k+1}\}
=\min\{b,0\}
=0.
\end{equation}
The fourth sum in \eqref{eq8.50}, compared to the corresponding sum in \eqref{eq8.26}, has an extra term $\Delta''_{k+1}$.
But then \eqref{eq8.30} and \eqref{eq8.31} do not hold with $\ell=k+1$, so it follows from \eqref{eq8.24} and Lemma~\ref{lem82}(ii) that
\begin{equation}\label{eq8.61}
\Delta''_{k+1}=0.
\end{equation}

Suppose that $c'_{k+1}=0$.
The first sum in \eqref{eq8.50}, compared to the corresponding sum in \eqref{eq8.26}, has an extra term
\begin{equation}\label{eq8.62}
\min\{b_{k+1},c'_{k+1}\}
=\min\{b,0\}
=0.
\end{equation}
The third sum in \eqref{eq8.50}, compared to the corresponding sum in \eqref{eq8.26}, has an extra term $\Delta'_{k+1}$.
Then \eqref{eq8.30} and \eqref{eq8.31} do not hold with $\ell=k+1$, so it follows from \eqref{eq8.23} and Lemma~\ref{lem82}(ii) that
\begin{equation}\label{eq8.63}
\Delta'_{k+1}=0.
\end{equation}
We now combine \eqref{eq8.60}--\eqref{eq8.63}.
Compared to \eqref{eq8.26}, there is no change in parity, so that if \eqref{eq8.36} holds, then
\begin{displaymath}
\parity(\Phi(\alpha;\beta'';N)-\Phi(\alpha;\beta';N))=\left\{\begin{array}{ll}
1,&\mbox{if $b_0=4$},\\
0,&\mbox{if $b_0\ne4$},
\end{array}\right.
\end{displaymath}
resulting in a lower bound \eqref{eq8.57}, provided that $\eps>0$ is sufficiently small.

Suppose that $c'_{k+1}=2$.
Then the first sum in \eqref{eq8.50}, compared modulo~$2$ to the corresponding sum in \eqref{eq8.26}, has an extra term
\begin{equation}\label{eq8.64}
\min\{b_{k+1},c'_{k+1}\}
=\min\{1,2\}
=\left\{\begin{array}{ll}
1,&\mbox{if $b=1$},\\
0,&\mbox{if $b\ne1$}.
\end{array}\right.
\end{equation}
The third sum in \eqref{eq8.50}, compared to the corresponding sum in \eqref{eq8.26}, has an extra term $\Delta'_{k+1}$.
Then \eqref{eq8.30} and \eqref{eq8.31} do not hold with $\ell=k+1$, so it follows from \eqref{eq8.23} and Lemma~\ref{lem82}(ii) that
\begin{equation}\label{eq8.65}
\Delta'_{k+1}=0.
\end{equation}
We now combine \eqref{eq8.60}, \eqref{eq8.61}, \eqref{eq8.64} and \eqref{eq8.65}.
If $b\ne1$, then compared to \eqref{eq8.26}, there is no change in parity, so that if \eqref{eq8.36} holds, then
\begin{displaymath}
\parity(\Phi(\alpha;\beta'';N)-\Phi(\alpha;\beta';N))=\left\{\begin{array}{ll}
1,&\mbox{if $b_0=4$},\\
0,&\mbox{if $b_0\ne4$},
\end{array}\right.
\end{displaymath}
resulting in a lower bound \eqref{eq8.58}, provided that $\eps>0$ is sufficiently small.
If $b=1$, then compared to \eqref{eq8.26}, there is a change in parity, so that if \eqref{eq8.36} holds, then
\begin{displaymath}
\parity(\Phi(\alpha;\beta'';N)-\Phi(\alpha;\beta';N))=\left\{\begin{array}{ll}
0,&\mbox{if $b_0=4$},\\
1,&\mbox{if $b_0\ne4$},
\end{array}\right.
\end{displaymath}
resulting in a lower bound \eqref{eq8.59}, provided that $\eps>0$ is sufficiently small.
\end{proof}

\begin{proof}[Proof of Theorem~\ref{thm34}]
In view of the inequalities \eqref{eq8.34}, note that the difference
\begin{equation}\label{eq8.66}
\Phi(\alpha;\beta'';N)-\Phi(\alpha;\beta';N)
\end{equation}
corresponds to a surface with a gate $[\beta',\beta'')$, as shown in the picture on the left in Figure~8.1, and an $\alpha$-geodesic that starts from the bottom of the left vertical edge of the surface.
With the horizontal edge identifications shown, it is not difficult to see that this is equivalent to the $2$-square-$(\beta''-\beta')$ surface shown in the picture on the right in Figure~8.1, and an $\alpha$-geodesic that starts at a point on the left vertical edge at a distance $\beta'$ from the top left vertex.

\begin{displaymath}
\begin{array}{c}
\includegraphics{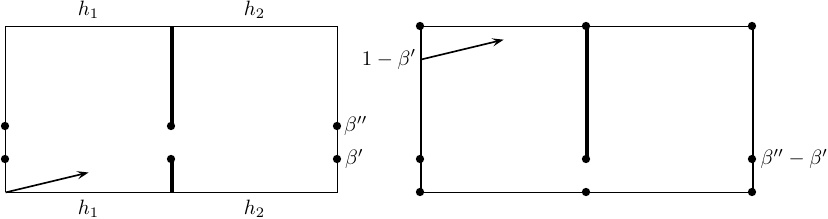}
\\
\mbox{Figure 8.1: two equivalent surfaces}
\end{array}
\end{displaymath}

Note also that when we the integer parameter $N$ in \eqref{eq8.66} progresses by~$1$, this corresponds to the $\alpha$-geodesic travelling from one vertical edge of the $2$-square-$b$ surface to the next vertical edge, and the length of this geodesic segment is clearly $(1+\alpha^2)^{1/2}$.

The length $\beta_0=\beta''_0-\beta'_0$ of the gate, given in terms of the $\alpha$-expansions
\begin{displaymath}
\beta'_0=\sum_{i=0}^\infty c'_i\eta_i,
\quad
c'_i=2,
\end{displaymath}
and
\begin{displaymath}
\beta''_0=\sum_{i=0}^\infty c''_i\eta_i,
\quad
c''_i=\left\{\begin{array}{ll}
4,&\mbox{if $i$ is even},\\
0,&\mbox{if $i$ is odd},
\end{array}\right.
\end{displaymath}
satisfies \eqref{eq8.32} and \eqref{eq8.33}, so that Lemmas \ref{lem83}--\ref{lem85} are valid.

(i)
For every positive integer~$n$, let
\begin{displaymath}
T^*_n=(1+\alpha^2)^{1/2}q_{2n+1}.
\end{displaymath}
Then \eqref{eq3.2} follows from \eqref{eq8.35} for $b=0$ and from \eqref{eq8.58} for $b=2,\ldots,C$.
Meanwhile, \eqref{eq3.3} follows from \eqref{eq8.59}.

(ii)
For every positive integer~$n$, let
\begin{displaymath}
T^{**}_n=(1+\alpha^2)^{1/2}q_{2n}.
\end{displaymath}
Then \eqref{eq3.4} follows from \eqref{eq8.35} for $b=0$ and from \eqref{eq8.51} for $b=1$ or $b=3,\ldots,C$.
Meanwhile, \eqref{eq3.5} follows from \eqref{eq8.52}.

The length $\beta_1=\beta''_1-\beta'_1$ of the gate, given in terms of the $\alpha$-expansions
\begin{displaymath}
\beta'_1=\sum_{i=0}^\infty c'_i\eta_i,
\quad
c'_i=\left\{\begin{array}{ll}
2,&\mbox{if $i$ is even},\\
2,&\mbox{if $i$ is odd and $i<2n+2$},\\
0,&\mbox{if $i$ is odd and $i>2n+2$},
\end{array}\right.
\end{displaymath}
and
\begin{displaymath}
\beta''_1=\sum_{i=0}^\infty c''_i\eta_i,
\quad
c''_i=\left\{\begin{array}{ll}
4,&\mbox{if $i$ is even},\\
0,&\mbox{if $i$ is odd},
\end{array}\right.
\end{displaymath}
satisfies \eqref{eq8.32} and \eqref{eq8.33}, so that Lemmas \ref{lem83}--\ref{lem85} are valid.

(iii)
For every positive integer $i=1,\ldots,n$, let
\begin{displaymath}
W_i=(1+\alpha^2)^{1/2}q_{2n+1},
\end{displaymath}
as in Part (i).
Then \eqref{eq3.6} follows from \eqref{eq8.35}, and \eqref{eq3.7} follows from \eqref{eq8.59}.

(iv)
Let $Q^\star=q_{2n+3}$.
For any integer $Q>Q^\star$, there clearly exists a unique integer $k\ge2n+2$ such that $q_{k+1}\le Q<q_{k+2}$.
Furthermore, either
\begin{equation}\label{eq8.67}
Q\in\BBB^*(k;b^*)
\quad
\mbox{for some $b^*=1,\ldots,a_{k+2}-1$},
\end{equation}
or $Q$ is almost as large as $q_{2k+2}$, in the sense that
\begin{equation}\label{eq8.68}
Q\in[a_{k+2}q_{k+1},q_{k+2}).
\end{equation}
Suppose first that \eqref{eq8.67} holds.
Then
\begin{displaymath}
\{0,1,\ldots,Q-1\}=\BBB(k)\cup\bigcup_{b=1}^{b^*-1}\BBB^*(k;b)\cup\{b^*q_{k+1},\ldots,Q-1\},
\end{displaymath}
so that
\begin{align}\label{eq8.69}
&
\vert\{N\in[0,Q):\parity(\Phi(\alpha;\beta'';N)-\Phi(\alpha;\beta';N))=0\}\vert
\nonumber
\\
&\quad
\ge\vert\{N\in\BBB(k):\parity(\Phi(\alpha;\beta'';N)-\Phi(\alpha;\beta';N))=0\}\vert
\nonumber
\\
&\quad\quad
+\sum_{b=1}^{b^*-1}\vert\{N\in\BBB^*(k;b):\parity(\Phi(\alpha;\beta'';N)-\Phi(\alpha;\beta';N))=0\}\vert
\nonumber
\\
&\quad\quad
+\vert\{N\in[b^*q_{k+1},Q):\parity(\Phi(\alpha;\beta'';N)-\Phi(\alpha;\beta';N))=0\}\vert.
\end{align}
If $k$ is even, then it follows from \eqref{eq8.35}, \eqref{eq8.57} and \eqref{eq8.69} that
\begin{align}\label{eq8.70}
&
\vert\{N\in[0,Q):\parity(\Phi(\alpha;\beta'';N)-\Phi(\alpha;\beta';N))=0\}\vert
\nonumber
\\
&\quad
>b^*q_{k+1}(1-\eps)+(Q-b^*q_{k+1}-q_{k+1}\eps)
=Q-(b^*+1)q_{k+1}\eps
\nonumber
\\
&\quad
\ge Q\left(1-\frac{b^*+1}{b^*}\eps\right)
\ge Q(1-2\eps).
\end{align}
If $k$ is odd and $b^*=1$, then the middle sum on the right hand side of \eqref{eq8.69} is empty, and it follows from \eqref{eq8.35} and \eqref{eq8.51} that
\begin{align}\label{eq8.71}
&
\vert\{N\in[0,Q):\parity(\Phi(\alpha;\beta'';N)-\Phi(\alpha;\beta';N))=0\}\vert
\nonumber
\\
&\quad
>q_{k+1}(1-\eps)+(Q-q_{k+1}-q_{k+1}\eps)
=Q-2q_{k+1}\eps
\ge Q(1-2\eps).
\end{align}
If $k$ is odd and $b^*=2$, then we ignore the last term on the right hand side of \eqref{eq8.69}, and it follows from \eqref{eq8.35} and \eqref{eq8.51} that
\begin{align}\label{eq8.72}
&
\vert\{N\in[0,Q):\parity(\Phi(\alpha;\beta'';N)-\Phi(\alpha;\beta';N))=0\}\vert
\nonumber
\\
&\quad
>2q_{k+1}(1-\eps)
>\frac{2}{3}Q(1-\eps)
>Q\left(\frac{2}{3}-\eps\right).
\end{align}
If $k$ is odd and $b^*\ge3$, then we ignore the term corresponding to $b=2$ in the middle sum on the right hand side of \eqref{eq8.69}, and it follows from \eqref{eq8.35} and \eqref{eq8.51} that
\begin{align}\label{eq8.73}
&
\vert\{N\in[0,Q):\parity(\Phi(\alpha;\beta'';N)-\Phi(\alpha;\beta';N))=0\}\vert
\nonumber
\\
&\quad
>(b^*-1)q_{k+1}(1-\eps)+(Q-b^*q_{k+1}-q_{k+1}\eps)
=(Q-q_{k+1})-(b^*+1)q_{k+1}\eps
\nonumber
\\
&\quad
\ge\frac{2}{3}Q-(b^*+1)q_{k+1}\eps
\ge Q\left(\frac{2}{3}-\frac{b^*+1}{b^*}\eps\right)
\ge Q\left(\frac{2}{3}-2\eps\right).
\end{align}
Suppose next that \eqref{eq8.68} holds.
Then analogous to \eqref{eq8.69}, we have
\begin{align}\label{eq8.74}
&
\vert\{N\in[0,Q):\parity(\Phi(\alpha;\beta'';N)-\Phi(\alpha;\beta';N))=0\}\vert
\nonumber
\\
&\quad
\ge\vert\{N\in\BBB(k):\parity(\Phi(\alpha;\beta'';N)-\Phi(\alpha;\beta';N))=0\}\vert
\nonumber
\\
&\quad\quad
+\sum_{\substack{{b=1}\\{b\ne2}}}^{a_{k+2}-1}\vert\{N\in\BBB^*(k;b):\parity(\Phi(\alpha;\beta'';N)-\Phi(\alpha;\beta';N))=0\}\vert,
\end{align}
where we have ignored the term corresponding to $b=2$ and the last term.
Combining \eqref{eq8.35} and \eqref{eq8.74} with \eqref{eq8.51} if $k$ is odd and with \eqref{eq8.57} if $k$ is even, we have
\begin{align}\label{eq8.75}
&
\vert\{N\in[0,Q):\parity(\Phi(\alpha;\beta'';N)-\Phi(\alpha;\beta';N))=0\}\vert
\nonumber
\\
&\quad
>(a_{k+2}-2)q_{k+1}(1-\eps)
>\frac{2}{3}Q(1-\eps)
>Q\left(\frac{2}{3}-\eps\right).
\end{align}
Since $\eps>0$ is arbitrary, we see that \eqref{eq3.8} follows immediately from \eqref{eq8.70}--\eqref{eq8.73} and \eqref{eq8.75} if we take $W^\star=(1+\alpha^2)^{1/2}Q^\star$.
We leave the deduction of the inequality $\vert\beta_1-\beta_0\vert<\eps$ to the reader.
\end{proof}

%
%

\section{Establishing the parity formula}\label{sec9}

Throughout this section, we assume that the integers $c_0,c_1,c_2,\ldots$ are even, and that $c_i$ is non-zero whenever $i$ is even.

\begin{proof}[Proof of Lemma~\ref{lem81}]
Recall the definition of $\Phi(\alpha;\beta;N)$ as given by \eqref{eq8.18}.
We need to find a description of the condition $0\le q\le N-1$ in terms of the $\alpha$-representations of $q$ and~$N$, as well as a description of the condition $\{q\alpha\}\in[0,\beta)$ in terms of the $\alpha$-representation of $q$ and the $\alpha$-expansion of~$\beta$.
For these, we recall some elementary facts in the theory of continued fractions.

\begin{fact1}
Suppose that
\begin{displaymath}
q=\sum_{i=0}^kx_iq_i
\quad\mbox{and}\quad
N=\sum_{i=0}^kb_iq_i
\end{displaymath}
are the $\alpha$-representations of $q$ and $N$ respectively, with the convention that when $q=0$, we have $x_0=\ldots=x_k=0$.
Then $0\le q\le N-1$ if and only if there exists some integer $m=0,\ldots,k$ such that
\begin{displaymath}
x_m<b_m,
\quad
x_i=b_i,
\quad
i=m+1,\ldots,k.
\end{displaymath}
\end{fact1}

\begin{fact2}
We have
\begin{displaymath}
\{q\alpha\}
=\left\{\sum_{i=0}^kx_iq_i\alpha\right\}
=\left\{\sum_{i=0}^kx_i(q_i\alpha-p_i)\right\}
=\left\{\sum_{i=0}^kx_i\eta_i\right\}
=\sum_{i=0}^kx_i\eta_i
\end{displaymath}
if and only if
\begin{equation}\label{eq9.1}
(x_0,\ldots,x_k)=(0,\ldots,0)
\quad\mbox{or}\quad
\mbox{$\min\{i=0,\ldots,k:x_i\ge1\}$ is even}.
\end{equation}
Suppose further that
\begin{displaymath}
\beta=\sum_{i=0}^kc_i\eta_i
\end{displaymath}
is the $\alpha$-expansion of~$\beta$, and that $0<\beta<1-\alpha$.
Then $\{q\alpha\}\in[0,\beta)$ if and only if there exists an integer $\ell$ such that
\begin{displaymath}
\sign(c_\ell-x_\ell)=(-1)^\ell,
\quad
x_i=c_i,
\quad
i=0,\ldots,\ell-1,
\end{displaymath}
and \eqref{eq9.1} holds.
\end{fact2}

For $N\in\BBB(k)=\{0,1,\ldots,q_{k+1}-1\}$, combining Facts 1 and~2, we have
\begin{equation}\label{eq9.2}
\Phi(\alpha;\beta;N)=\sum_{m=0}^k\sum_{\ell=0}^k\Phi_{\ell,m}(\alpha;\beta;N),
\end{equation}
where $\Phi_{\ell,m}(\alpha;\beta;N)$ denotes the number of integer sequences $(x_0,\ldots,x_k)$ such that \eqref{eq9.1} holds,
\begin{equation}\label{eq9.3}
\begin{array}{c}
x_0=c_0,\quad
\ldots,
\quad x_{\ell-1}=c_{\ell-1},
\vspace{5pt}\\
\sign(c_\ell-x_\ell)=(-1)^\ell,
\vspace{5pt}\\
x_m<b_m,
\vspace{5pt}\\
x_{m+1}=b_{m+1},
\quad
\ldots,
\quad
x_k=b_k,
\end{array}
\end{equation}
and the remaining terms satisfy
\begin{equation}\label{eq9.4}
0\le x_i\le a_{i+1},
\quad
x_{i-1}=0\mbox{ if }x_i=a_{i+1},
\quad
i=\ell+1,\ldots,m-1.
\end{equation}
To study some of these terms $\Phi_{\ell,m}(\alpha;\beta;N)$, we need a technical lemma.

\begin{lem}\label{lem91}
Let $\AAA_{h,r}(s)$ denote the number of integer sequences $(y_h,y_{h+1},\ldots,y_r)$ such that $y_h=s$,
\begin{displaymath}
0\le y_i\le a_{i+1},
\quad
i=h+1,\ldots,r,
\end{displaymath}
and
\begin{displaymath}
y_{i-1}=0
\quad\mbox{if}\quad
y_i=a_{i+1},
\quad
i=h+1,\ldots,r.
\end{displaymath}
Then
\begin{equation}\label{eq9.5}
\AAA_{h,r}(s)=(-1)^h(q_hp_{r+1}-p_hq_{r+1}),
\quad
s\ge1,
\end{equation}
and
\begin{equation}\label{eq9.6}
\AAA_{h,r}(0)=(-1)^h((p_{h+1}-p_h)q_{r+1}-(q_{h+1}-q_h)p_{r+1}).
\end{equation}
\end{lem}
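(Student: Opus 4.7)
The plan is to reduce everything to a two-term linear recurrence in $r$ and then match initial conditions against the basis $\{q_{r+1},p_{r+1}\}$ of the solution space. First I would make the key structural observation: the count $\AAA_{h,r}(s)$ depends on $s$ only through whether $s=0$ or $s\ge1$. Indeed, among all the constraints defining $\AAA_{h,r}$, the only one involving the value $y_h=s$ is the constraint for $i=h+1$, namely that $y_h=0$ whenever $y_{h+1}=a_{h+2}$. When $s=0$ this is vacuous, so $y_{h+1}$ is free in $\{0,1,\ldots,a_{h+2}\}$; when $s\ge1$ it forces $y_{h+1}\ne a_{h+2}$. Write $G(h,r)=\AAA_{h,r}(0)$ and $G'(h,r)=\AAA_{h,r}(s)$ for $s\ge1$.

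Next I would derive the recurrence by splitting on the value of the last digit $y_r$. If $y_r\in\{0,1,\ldots,a_{r+1}-1\}$, the constraint $y_{r-1}=0$ when $y_r=a_{r+1}$ is vacuous, so the preceding segment $(y_{h+1},\ldots,y_{r-1})$ is counted by $G(h,r-1)$ (resp.\ $G'(h,r-1)$); this contributes a factor $a_{r+1}$. If $y_r=a_{r+1}$, then $y_{r-1}=0$ is forced, and the segment $(y_{h+1},\ldots,y_{r-2})$ is counted by $G(h,r-2)$ (resp.\ $G'(h,r-2)$). Hence
\begin{equation*}
G(h,r)=a_{r+1}G(h,r-1)+G(h,r-2),
\qquad
G'(h,r)=a_{r+1}G'(h,r-1)+G'(h,r-2),
\end{equation*}
which is precisely the recurrence \eqref{eq8.2} satisfied by both $q_{r+1}$ and $p_{r+1}$ (with the shift $k\mapsto r+1$). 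Since $p_{k+1}q_k-p_kq_{k+1}=(-1)^k$ is nonzero, the pair $(q_{r+1},p_{r+1})$ is a basis of the solution space, so I may write
\begin{equation*}
G(h,r)=Aq_{r+1}+Bp_{r+1},
\qquad
G'(h,r)=A'q_{r+1}+B'p_{r+1},
\end{equation*}
with constants $A,B,A',B'$ depending only on $h$.

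To solve for these constants I would use the base values at $r=h$ and $r=h+1$. Direct enumeration gives $G(h,h)=1$ and $G(h,h+1)=a_{h+2}+1$, while $G'(h,h)=1$ and $G'(h,h+1)=a_{h+2}$. Substituting these into the ansatz, then using the recurrences $q_{h+2}=a_{h+2}q_{h+1}+q_h$ and $p_{h+2}=a_{h+2}p_{h+1}+p_h$ to eliminate the $a_{h+2}$ terms, yields respectively
\begin{equation*}
Aq_{h+1}+Bp_{h+1}=1,\quad Aq_h+Bp_h=1,
\qquad
A'q_{h+1}+B'p_{h+1}=1,\quad A'q_h+B'p_h=0.
\end{equation*}
Both $2\times2$ systems have determinant $q_hp_{h+1}-p_hq_{h+1}=(-1)^h$, so Cramer's rule gives $A=(-1)^h(p_{h+1}-p_h)$, $B=-(-1)^h(q_{h+1}-q_h)$, $A'=(-1)^{h+1}p_h$, $B'=(-1)^hq_h$, and substituting back recovers the closed forms \eqref{eq9.5} and \eqref{eq9.6}.

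There is no genuine obstacle here; the whole argument is a routine recurrence-plus-basis calculation, and the main thing to be careful about is simply the bookkeeping at the endpoints (checking that the empty sequence gives $G(h,h)=G'(h,h)=1$, and that the $y_{h+1}\ne a_{h+2}$ restriction disappears exactly when $s=0$). Once the structural observation that $\AAA_{h,r}(s)$ splits into only two cases is in place, everything else follows mechanically.
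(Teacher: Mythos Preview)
Your proof is correct and rests on the same recurrence as the paper's, obtained by splitting on the last digit $y_r$: both arguments reduce to $\AAA_{h,r}(s)=a_{r+1}\AAA_{h,r-1}(s)+\AAA_{h,r-2}(s)$ and match against the convergents. The only genuine difference is in how \eqref{eq9.6} is handled: the paper first proves \eqref{eq9.5} by straight induction and then derives \eqref{eq9.6} from the separate identity $\AAA_{h,r}(0)=\AAA_{h,r}(1)+\AAA_{h+1,r}(a_{h+2})$, whereas you treat the $s=0$ and $s\ge1$ cases uniformly by solving for coefficients in the basis $\{q_{r+1},p_{r+1}\}$; both routes are equally short. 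One small wording slip: when you describe the preceding segment as $(y_{h+1},\ldots,y_{r-1})$ you mean $(y_h,\ldots,y_{r-1})$, since $y_h=s$ is part of the sequence being counted --- the mathematics is unaffected.
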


\begin{proof}
For $s\ge1$, we shall prove \eqref{eq9.5} by induction on~$r$, starting with $r=h$.
Clearly $\AAA_{h,h}(s)=1$, and
\begin{displaymath}
\AAA_{h,h+1}
=a_{h+2}
=a_{h+2}(-1)^h(q_hp_{h+1}-p_hq_{h+1})
=(-1)^h(q_hp_{h+2}-p_hq_{h+2}).
\end{displaymath}
Note next that we have the recurrence relation
\begin{displaymath}
\AAA_{h,j}(s)=a_{j+1}\AAA_{h,j-1}(s)+\AAA_{h,j-2}(s).
\end{displaymath}
To see this, note that for each of the $a_{j+1}$ choices of $y_j$ satisfying $0\le y_j<a_{j+1}$, there are $\AAA_{h,j-1}(s)$ choices for the integer sequence $(y_h,y_{h+1},\ldots,y_{j-1})$, and for $y_j=a_{j+1}$, we must have $y_{j-1}=0$ and so there are $\AAA_{h,j-2}(s)$ choices for the integer sequence $(y_h,y_{h+1},\ldots,y_{j-2})$.
Using the induction hypothesis for the right hand side, we have
\begin{align}
\AAA_{h,j}(s)
&
=a_{j+1}(-1)^h(q_hp_j-p_hq_j)+(-1)^h(q_hp_{j-1}-p_hq_{j-1})
\nonumber
\\
&
=(-1)^h(q_hp_{j+1}-p_hq_{j+1}).
\nonumber
\end{align}
The identity \eqref{eq9.5} follows from the Principle of induction.
Finally, note that
\begin{displaymath}
\AAA_{h,r}(0)=\AAA_{h,r}(1)+\AAA_{h+1,r}(a_{h+2}).
\end{displaymath}
Applying \eqref{eq9.5} to the terms on the right now leads to the identity \eqref{eq9.6}.
\end{proof}

To evaluate $\Phi(\alpha;\beta;N)$, we need to split into cases.

\begin{case1}
Suppose that $m>\ell$, with $\ell$ even, so $c_\ell\ge1$.
The conditions \eqref{eq9.3} become
\begin{displaymath}
\begin{array}{c}
x_0=c_0,\quad
\ldots,
\quad x_{\ell-1}=c_{\ell-1},
\vspace{5pt}\\
x_\ell\in\{0,1,\ldots,c_\ell-1\},
\vspace{5pt}\\
x_m\in\{0,1,\ldots,b_m-1\},
\vspace{5pt}\\
x_{m+1}=b_{m+1},
\quad
\ldots,
\quad
x_k=b_k,
\end{array}
\end{displaymath}
and the remaining terms satisfy \eqref{eq9.4}.
Let $\Phi^*_{\ell,m}(\alpha;\beta;N)$ denote the number of integer sequences $(x_0,\ldots,x_k)$ such that \eqref{eq9.3} and \eqref{eq9.4} hold.
The restriction \eqref{eq8.25} gives $x_{m+1}\ne a_{m+2}$, so
\begin{displaymath}
\Phi^*_{\ell,m}(\alpha;\beta;N)=b_m\Omega_{m-1},
\end{displaymath}
where $\Omega_{m-1}$ is the number of those sequences $(x_\ell,x_{\ell+1},\ldots,x_{m-1})$ such that
\begin{displaymath}
x_\ell\in\{0,1,\ldots,c_\ell-1\},
\end{displaymath}
and \eqref{eq9.4} holds.
Using Lemma~9.1, we have
\begin{align}
\Omega_{m-1}
&
=\sum_{s=1}^{c_\ell-1}\AAA_{\ell,m-1}(s)+\AAA_{\ell,m-1}(0)
\nonumber
\\
&
=(c_\ell-1)(p_mq_\ell-q_mp_\ell)+(q_m(p_{\ell+1}-p_\ell)-p_m(q_{\ell+1}-q_\ell))
\nonumber
\\
&
=c_\ell(p_mq_\ell-q_mp_\ell)-(p_mq_{\ell+1}-q_mp_{\ell+1}).
\nonumber
\end{align}
Since $c_\ell$ is even, it follows that
\begin{displaymath}
\Phi^*_{\ell,m}(\alpha;\beta;N)=b_m(p_mq_{\ell+1}-q_mp_{\ell+1})\bmod{2}.
\end{displaymath}

Suppose that $\ell\ne0$.
The assumption that $c_0$ is non-zero and the requirement $x_0=c_0$ then guarantee that $x_0$ is non-zero, so that \eqref{eq9.1} is clearly satisfied, and so
$\Phi_{\ell,m}(\alpha;\beta;N)=\Phi^*_{\ell,m}(\alpha;\beta;N)$.
On the other hand, when $\ell=0$, we do not have $x_0=c_0$ but $\sign(c_0-x_0)=1$, so that $x_0<c_0$, and this does not guarantee that \eqref{eq9.1} holds.
To obtain $\Phi_{0,m}(\alpha;\beta;N)$, we then have to deduct from $\Phi^*_{0,m}(\alpha;\beta;N)$ the count of those sequences $(x_0,\ldots,x_k)$ that do not satisfy \eqref{eq9.1}.
We shall not give a precise value of this count.
Instead, it suffices to show that this count is independent of the sequence $c_0,c_1,c_2,c_3,\ldots.$
The details are slightly different, depending on whether $m$ is odd or even.

Suppose first of all that $m$ is odd.
Then those sequences $(x_0,\ldots,x_k)$ that need to be excluded from the count are the following: either $m\ge3$ and there exists an even integer $s=0,2,\ldots,m-3$ such that
\begin{displaymath}
\begin{array}{c}
x_0=x_1=\ldots=x_s=0,
\vspace{5pt}\\
x_{s+1}\in\{1,\ldots,a_{s+2}\},
\vspace{5pt}\\
x_{s+2}\in\{0,\ldots,a_{s+3}-1\},
\vspace{5pt}\\
\mbox{$x_{s+3},\ldots,x_{m-1}$ satisfy \eqref{eq9.4}},
\vspace{5pt}\\
x_m<b_m,
\vspace{5pt}\\
x_{m+1}=b_{m+1},\quad\ldots,\quad x_k=b_k,
\end{array}
\end{displaymath}
or, if $b_{m+1}\ne a_{m+1}$,
\begin{displaymath}
\begin{array}{c}
x_0=x_1=\ldots=x_{m-1}=0,
\vspace{5pt}\\
x_m\in\{1,\ldots,b_m-1\},
\vspace{5pt}\\
x_{m+1}=b_{m+1},\quad\ldots,\quad x_k=b_k,
\end{array}
\end{displaymath}
or
\begin{displaymath}
\begin{array}{c}
x_0=x_1=\ldots=x_m=0,
\vspace{5pt}\\
\mbox{$\min\{i=m+1,\ldots,k:b_i\ge1\}$ is odd}.
\end{array}
\end{displaymath}
This count is clearly independent of the sequence $c_0,c_1,c_2,c_3,\ldots.$

Suppose next that $m$ is even.
Then those sequences $(x_0,\ldots,x_k)$ that need to be excluded from the count are the following: either $m\ge4$ and there exists an even integer $s=0,2,\ldots,m-4$ such that
\begin{displaymath}
\begin{array}{c}
x_0=x_1=\ldots=x_s=0,
\vspace{5pt}\\
x_{s+1}\in\{1,\ldots,a_{s+2}\},
\vspace{5pt}\\
x_{s+2}\in\{0,\ldots,a_{s+3}-1\},
\vspace{5pt}\\
\mbox{$x_{s+3},\ldots,x_{m-1}$ satisfy \eqref{eq9.4}},
\vspace{5pt}\\
x_m<b_m,
\vspace{5pt}\\
x_{m+1}=b_{m+1},\quad\ldots,\quad x_k=b_k,
\end{array}
\end{displaymath}
or $m\ge2$ and
\begin{displaymath}
\begin{array}{c}
x_0=x_1=\ldots=x_{m-2}=0,
\vspace{5pt}\\
x_{m-1}\in\{1,\ldots,a_m\},
\vspace{5pt}\\
x_m<b_m,
\vspace{5pt}\\
x_{m+1}=b_{m+1},\quad\ldots,\quad x_k=b_k,
\end{array}
\end{displaymath}
or $m\ge2$ and
\begin{displaymath}
\begin{array}{c}
x_0=x_1=\ldots=x_m=0,
\vspace{5pt}\\
\mbox{$\min\{i=m+1,\ldots,k:b_i\ge1\}$ is odd}.
\end{array}
\end{displaymath}
This count is also independent of the sequence $c_0,c_1,c_2,c_3,\ldots.$

In summary, corresponding to these values of $\ell$ and~$m$, we can write
\begin{equation}\label{eq9.7}
\III_1
=\mathop{\sum_{m=0}^k\sum_{\ell=0}^k}_{\substack{{m>\ell}\\{\ell\ \textrm{even}}}}
b_m(p_mq_{\ell+1}-q_mp_{\ell+1})
-\EEE_1,
\end{equation}
where $\EEE_1$ is independent of the sequence $c_0,c_1,c_2,c_3,\ldots.$
\end{case1}

\begin{case2}
Suppose that $m>\ell$, with $\ell$ odd, so $c_{\ell-1}\ge1$.
The conditions \eqref{eq9.3} become
\begin{displaymath}
\begin{array}{c}
x_0=c_0,\quad
\ldots,
\quad x_{\ell-1}=c_{\ell-1},
\vspace{5pt}\\
x_\ell\in\{c_\ell+1,\ldots,a_{\ell+1}-1\},
\vspace{5pt}\\
x_m\in\{0,1,\ldots,b_m-1\},
\vspace{5pt}\\
x_{m+1}=b_{m+1},
\quad
\ldots,
\quad
x_k=b_k,
\end{array}
\end{displaymath}
and the remaining terms satisfy \eqref{eq9.4}.
The restriction \eqref{eq8.25} gives $x_{m+1}\ne a_{m+2}$, so
\begin{displaymath}
\Phi_{\ell,m}(\alpha;\beta;N)=b_m\Omega_{m-1},
\end{displaymath}
where $\Omega_{m-1}$ is the number of those sequences $(x_\ell,x_{\ell+1},\ldots,x_{m-1})$ such that
\begin{displaymath}
x_\ell\in\{c_\ell+1,\ldots,a_{\ell+1}-1\},
\end{displaymath}
and \eqref{eq9.4} holds.
Using Lemma~9.1, we have
\begin{displaymath}
\Omega_{m-1}
=\sum_{s=c_\ell+1}^{a_{\ell+1}-1}\AAA_{\ell,m-1}(s)
=-(a_{\ell+1}-c_\ell-1)(p_mq_\ell-q_mp_\ell).
\end{displaymath}
Since $c_\ell$ is even, it follows that
\begin{displaymath}
\Phi_{\ell,m}(\alpha;\beta;N)=b_m(a_{\ell+1}-1)(p_mq_\ell-q_mp_\ell)\bmod{2}.
\end{displaymath}
Corresponding to these values of $\ell$ and~$m$, we can write, for instance
\begin{equation}\label{eq9.8}
\III_2
=\mathop{\sum_{m=0}^k\sum_{\ell=0}^k}_{\substack{{m>\ell}\\{\ell\ \textrm{odd}}}}
b_ma_{\ell+1}(p_mq_\ell-q_mp_\ell)
+\mathop{\sum_{m=0}^k\sum_{\ell=0}^k}_{\substack{{m>\ell}\\{\ell\ \textrm{odd}}}}
b_m(p_mq_\ell-q_mp_\ell)
=\III_2^{(1)}+\III_2^{(2)}.
\end{equation}
\end{case2}

\begin{case3}
Suppose that $m=\ell$, with $\ell$ even.
The conditions \eqref{eq9.3} and \eqref{eq9.4} become
\begin{displaymath}
\begin{array}{c}
x_0=c_0,\quad
\ldots,
\quad x_{\ell-1}=c_{\ell-1},
\vspace{5pt}\\
x_\ell\in\{0,1,\ldots,\min\{b_\ell,c_\ell\}-1\},
\vspace{5pt}\\
x_{\ell+1}=b_{\ell+1},
\quad
\ldots,
\quad
x_k=b_k.
\end{array}
\end{displaymath}
The restriction \eqref{eq8.25} gives $x_{\ell+1}\ne a_{\ell+2}$, and so $\Phi^*_{\ell,m}(\alpha;\beta;N)=\min\{b_\ell,c_\ell\}$.

As in Case~1, for $\ell=0$, we need to deduct from $\Phi^*_{0,m}(\alpha;\beta;N)$ the count of those sequences $(x_0,\ldots,x_k)$ that do not satisfy \eqref{eq9.1}, \textit{i.e.}, those that satisfy
\begin{displaymath}
\begin{array}{c}
x_0=0,
\vspace{5pt}\\
\mbox{$\min\{i=1,\ldots,k:b_i\ge1\}$ is odd}.
\end{array}
\end{displaymath}
As before, this count is independent of the sequence $c_0,c_1,c_2,c_3,\ldots.$

In summary, corresponding to these values of $\ell$ and~$m$, we can write
\begin{equation}\label{eq9.9}
\III_3
=\mathop{\sum_{m=0}^k\sum_{\ell=0}^k}_{\substack{{m=\ell}\\{\ell\ \textrm{even}}}}
\min\{b_\ell,c_\ell\}
-\EEE_3
=\sum_{\substack{{\ell=0}\\{\ell\ \textrm{even}}}}^k
\min\{b_\ell,c_\ell\}
-\EEE_3,
\end{equation}
where $\EEE_3$ is independent of the sequence $c_0,c_1,c_2,c_3,\ldots.$
\end{case3}

\begin{case4}
Suppose that $m=\ell$, with $\ell$ odd.
The conditions \eqref{eq9.3} and \eqref{eq9.4} become
\begin{displaymath}
\begin{array}{c}
x_0=c_0,\quad
\ldots,
\quad x_{\ell-1}=c_{\ell-1},
\vspace{5pt}\\
x_\ell\in\{c_\ell+1,\ldots,a_{\ell+1}-1\},
\vspace{5pt}\\
x_\ell\in\{0,1,\ldots,b_\ell-1\},
\vspace{5pt}\\
x_{\ell+1}=b_{\ell+1},
\quad
\ldots,
\quad
x_k=b_k.
\end{array}
\end{displaymath}
The restriction \eqref{eq8.25} gives $x_{\ell+1}\ne a_{\ell+2}$, and it is not difficult to see that
\begin{displaymath}
\Phi_{\ell,m}(\alpha;\beta;N)=\max\{b_\ell-c_\ell-1,0\}.
\end{displaymath}
Corresponding to these values of $\ell$ and~$m$, we can write
\begin{equation}\label{eq9.10}
\III_4
=\mathop{\sum_{m=0}^k\sum_{\ell=0}^k}_{\substack{{m=\ell}\\{\ell\ \textrm{odd}}}}
\max\{b_\ell-c_\ell-1,0\}
=\sum_{\substack{{\ell=0}\\{\ell\ \textrm{odd}}}}^k
\max\{b_\ell-c_\ell-1,0\}.
\end{equation}
\end{case4}

\begin{case5}
Suppose that $\ell>m$.
Note here that the condition \eqref{eq9.3} is very restrictive and the condition \eqref{eq9.4} is void.
It is easy to see that
\begin{displaymath}
\Phi_{\ell,m}(\alpha;\beta;N)=\delta(\ell,m),
\end{displaymath}
where
\begin{equation}\label{eq9.11}
\delta(\ell,m)=\left\{\begin{array}{ll}
1,&\mbox{if $c_m<b_m$, $c_i=b_i$, $m<i<\ell$, and $\sign(c_\ell-b_\ell)=(-1)^\ell$},\\
0,&\mbox{otherwise}.
\end{array}\right.
\end{equation}
Corresponding to these values of $\ell$ and~$m$, we can write
\begin{equation}\label{eq9.12}
\III_5=\mathop{\sum_{m=0}^k\sum_{\ell=0}^k}_{\ell>m}\delta(\ell,m).
\end{equation}
\end{case5}

Combining \eqref{eq9.2}, \eqref{eq9.7}--\eqref{eq9.10} and \eqref{eq9.12}, we see that
\begin{equation}\label{eq9.13}
\Phi(\alpha;\beta;N)=\III_1+\III_2^{(1)}+\III_2^{(2)}+\III_3+\III_4+\III_5+\EEE_1+\EEE_3\bmod{2}.
\end{equation}
We shall show that
\begin{equation}\label{eq9.14}
\III_1+\III_2^{(2)}=0\bmod{2},
\end{equation}
and that
\begin{equation}\label{eq9.15}
\III_2^{(1)}=\sum_{m=0}^kb_m(p_mq_0-q_mp_0)
+\sum_{\substack{{\ell=0}\\{\ell\ \textrm{odd}}}}^k
b_\ell
\bmod{2}.
\end{equation}
We then show that
\begin{equation}\label{eq9.16}
\III_3+\III_4
=\sum_{\ell=0}^k\min\{b_\ell,c_\ell\}
+\sum_{\substack{{\ell=0}\\{\ell\ \textrm{odd}}}}^kb_\ell
+\sum_{\substack{{\ell=0}\\{\ell\ \textrm{odd}}\\{b_\ell>c_\ell}}}^k1
\bmod{2},
\end{equation}
and that
\begin{equation}\label{eq9.17}
\III_5
=\sum_{\ell=1}^k\Delta_\ell
+\sum_{\substack{{\ell=0}\\{\ell\ \textrm{odd}}\\{b_\ell>c_\ell}}}^k1
\bmod{2}.
\end{equation}
Then combining \eqref{eq9.13}--\eqref{eq9.17}, we deduce that
\begin{displaymath}
\Phi(\alpha;\beta;N)
=\sum_{m=0}^kb_m(p_mq_0-q_mp_0)
+\sum_{\ell=0}^k\min\{b_\ell,c_\ell\}
+\sum_{\ell=1}^k\Delta_\ell
+\EEE_1+\EEE_3
\bmod{2},
\end{displaymath}
where the translation term
\begin{displaymath}
\sum_{m=0}^kb_m(p_mq_0-q_mp_0)+\EEE_1+\EEE_3
\end{displaymath}
proves to be a nuisance.

Taking two numbers $\beta'$ and $\beta''$ satisfying $0<\beta'<\beta''<1-\alpha$, with $\alpha$-expansion digits $c'_i$ and $c''_i$ respectively and taking the difference $\Phi(\alpha;\beta'';N)-\Phi(\alpha;\beta';N)$, we remove this translation term, and the deduction of the parity formula \eqref{eq8.26} is essentially complete, apart from the deduction of the congruences \eqref{eq9.14}--\eqref{eq9.17}.

To establish \eqref{eq9.14}, note simply from \eqref{eq9.7} and \eqref{eq9.8} that
\begin{align}
\III_2^{(2)}
&
=\mathop{\sum_{m=0}^k\sum_{\ell=0}^k}_{\substack{{m>\ell}\\{\ell\ \textrm{odd}}}}
b_m(p_mq_\ell-q_mp_\ell)
=\mathop{\sum_{m=0}^k\sum_{\ell=0}^k}_{\substack{{m>\ell+1}\\{\ell\ \textrm{even}}}}
b_m(p_mq_{\ell+1}-q_mp_{\ell+1})
\nonumber
\\
&
=\mathop{\sum_{m=0}^k\sum_{\ell=0}^k}_{\substack{{m>\ell}\\{\ell\ \textrm{even}}}}
b_m(p_mq_{\ell+1}-q_mp_{\ell+1})
-b_{\ell+1}(p_{\ell+1}q_{\ell+1}-q_{\ell+1}p_{\ell+1})
=\III_1.
\nonumber
\end{align}

To establish \eqref{eq9.15}, note that using the recurrence relations \eqref{eq8.2}, we can write
\begin{equation}\label{eq9.18}
\III_2^{(1)}
=\mathop{\sum_{m=0}^k\sum_{\ell=0}^k}_{\substack{{m>\ell}\\{\ell\ \textrm{odd}}}}
b_m(p_m(q_{\ell+1}-q_{\ell-1})-q_m(p_{\ell+1}-p_{\ell-1}))
=\III_2^{(1+)}+\III_2^{(1-)},
\end{equation}
where
\begin{align}\label{eq9.19}
\III_2^{(1+)}
&
=\mathop{\sum_{m=0}^k\sum_{\ell=0}^k}_{\substack{{m>\ell}\\{\ell\ \textrm{odd}}\\{m\ \textrm{even}}}}
b_m(p_m(q_{\ell+1}-q_{\ell-1})-q_m(p_{\ell+1}-p_{\ell-1}))
\nonumber
\\
&
=\sum_{\substack{{m=0}\\{m\ \textrm{even}}}}^k
b_m(p_m(q_m-q_0)-q_m(p_m-p_0))
=\sum_{\substack{{m=0}\\{m\ \textrm{even}}}}^k
b_m(p_mq_0-q_mp_0),
\end{align}
and
\begin{align}\label{eq9.20}
\III_2^{(1-)}
&
=\mathop{\sum_{m=0}^k\sum_{\ell=0}^k}_{\substack{{m>\ell}\\{\ell\ \textrm{odd}}\\{m\ \textrm{odd}}}}
b_m(p_m(q_{\ell+1}-q_{\ell-1})-q_m(p_{\ell+1}-p_{\ell-1}))
\nonumber
\\
&
=\sum_{\substack{{m=0}\\{m\ \textrm{odd}}}}^k
b_m(p_m(q_{m-1}-q_0)-q_m(p_{m-1}-p_0))
\nonumber
\\
&
=\sum_{\substack{{m=0}\\{m\ \textrm{odd}}}}^k
b_m(p_m(q_m-q_0)-q_m(p_m-p_0))
\nonumber
\\
&\quad
+\sum_{\substack{{m=0}\\{m\ \textrm{odd}}}}^k
b_m(p_m(q_{m-1}-q_m)-q_m(p_{m-1}-p_m))
\nonumber
\\
&
=\sum_{\substack{{m=0}\\{m\ \textrm{odd}}}}^k
b_m(p_mq_0-q_mp_0)
+\sum_{\substack{{\ell=0}\\{m\ \textrm{odd}}}}^k
b_m
\bmod{2}.
\end{align}
The congruence \eqref{eq9.15} now follows on combining \eqref{eq9.18}--\eqref{eq9.20}.

To establish \eqref{eq9.16}, note that
\begin{displaymath}
\max\{b_\ell-c_\ell-1,0\}=\left\{\begin{array}{ll}
b_\ell-c_\ell-1,&\mbox{if $b_\ell>c_\ell$},\\
0,&\mbox{if $b_\ell\le c_\ell$},
\end{array}\right.
\end{displaymath}
so that
\begin{displaymath}
\min\{b_\ell,c_\ell\}=\left\{\begin{array}{ll}
c_\ell=b_\ell-(b_\ell-c_\ell-1)-1,&\mbox{if $b_\ell>c_\ell$},\\
b_\ell=b_\ell-0,&\mbox{if $b_\ell\le c_\ell$},
\end{array}\right.
\end{displaymath}
and so
\begin{displaymath}
\min\{b_\ell,c_\ell\}=\left\{\begin{array}{ll}
b_\ell-\max\{b_\ell-c_\ell-1,0\}-1,&\mbox{if $b_\ell>c_\ell$},\\
b_\ell-\max\{b_\ell-c_\ell-1,0\},&\mbox{if $b_\ell\le c_\ell$}.
\end{array}\right.
\end{displaymath}
Hence
\begin{equation}\label{eq9.21}
\sum_{\substack{{\ell=0}\\{\ell\ \textrm{odd}}}}^k
\min\{b_\ell,c_\ell\}
=\sum_{\substack{{\ell=0}\\{\ell\ \textrm{odd}}}}^kb_\ell
+\sum_{\substack{{\ell=0}\\{\ell\ \textrm{odd}}}}^k
\max\{b_\ell-c_\ell-1,0\}
+\sum_{\substack{{\ell=0}\\{\ell\ \textrm{odd}}\\{b_\ell>c_\ell}}}^k1
\bmod{2}.
\end{equation}
The congruence \eqref{eq9.16} now follows on combining \eqref{eq9.9}, \eqref{eq9.10} and \eqref{eq9.21}.

Finally, to establish \eqref{eq9.17}, we note from \eqref{eq8.23}, \eqref{eq8.24} and Lemma~\ref{lem82} that $\Delta_\ell=1$ if and only if there exists an integer $m<\ell$ such that
\begin{equation}\label{eq9.22}
\mbox{$\ell$ is even, \eqref{eq8.27} holds and \eqref{eq8.28} holds},
\end{equation}
and $\Delta_\ell=-1$ if and only if
\begin{equation}\label{eq9.23}
\mbox{$\ell$ is odd, \eqref{eq8.29} holds and \eqref{eq8.30} holds}.
\end{equation}
or there exists an integer $m<\ell$ such that
\begin{equation}\label{eq9.24}
\mbox{$\ell$ is odd, \eqref{eq8.29} holds and \eqref{eq8.31} holds}.
\end{equation}
Naturally for a given~$\ell$, the integer $m$ for which \eqref{eq9.22} or \eqref{eq9.24} is valid is uniquely determined.
Suppose first that $\ell\ge1$ is even.
Then $\Delta_\ell=1$ if and only if there exists an integer $m<\ell$ such that \eqref{eq9.22} holds.
For the integer $m$ in question, it follows from \eqref{eq9.11} that $\delta(\ell,m)=1$.
Thus
\begin{equation}\label{eq9.25}
\sum_{\substack{{\ell=1}\\{\ell\ \textrm{even}}}}^k\Delta_\ell
=\mathop{\sum_{m=0}^k\sum_{\ell=0}^k}_{\substack{{\ell>m}\\{\ell\ \textrm{even}}}}\delta(\ell,m)
\end{equation}
Suppose next that $\ell\ge1$ is odd.
Then $\Delta_\ell=-1$ if and only if \eqref{eq9.23} holds or there exists an integer $m<\ell$ such that \eqref{eq9.24} holds.
In the latter case, for the integer $m$ in question, it follows from \eqref{eq9.11} that $\delta(\ell,m)=0$.
Thus
\begin{equation}\label{eq9.26}
\mathop{\sum_{m=0}^k\sum_{\ell=0}^k}_{\substack{{\ell>m}\\{\ell\ \textrm{odd}}}}\delta(\ell,m)=0,
\end{equation}
and
\begin{equation}\label{eq9.27}
\sum_{\substack{{\ell=1}\\{\ell\ \textrm{odd}}}}^k\Delta_\ell
=\sum_{\substack{{\ell=0}\\{\ell\ \textrm{odd}}\\{b_\ell>c_\ell}}}^k1.
\end{equation}
The congruence \eqref{eq9.17} now follows on combining \eqref{eq9.12} and \eqref{eq9.25}--\eqref{eq9.27}.

This completes the deduction of the parity formula \eqref{eq8.26}.
\end{proof}

%
%

\end{document}